\documentclass{article} \usepackage[backend=bibtex,style=numeric,sorting=none]{biblatex}
\usepackage[parfill]{parskip}
\usepackage{graphicx}
\usepackage{subcaption}
\usepackage{standalone}
\usepackage{caption}
\usepackage{amsmath}
\usepackage{amssymb}
\usepackage{amsthm}
\usepackage{enumitem}
\usepackage{mathtools}
\usepackage{amssymb}
\usepackage{comment}
\usepackage{float}
\usepackage{thmtools}
\usepackage{algorithm}
\usepackage{algpseudocode}
\usepackage{hyperref}
\usepackage{cleveref}
\usepackage{mathdots}
\usepackage{microtype}
\usepackage{xcolor}
\usepackage[a4paper, left=1in, right=1in, top=1in, bottom=1in]{geometry}
\usepackage[most]{tcolorbox}

\makeatletter
\newcommand{\namedlabel}[2]{%
  \protected@edef\@currentlabel{#2}% Set standard label text
  \protected@edef\@currentlabelname{#2}% Set nameref text
  \label{#1}%
}
\makeatother

\newtheorem{theorem}{Theorem}
\newtheorem*{theorem*}{Theorem}
\newtheorem{lemma}{Lemma}
\newtheorem{corollary}{Corollary}
\newtheorem{remark}{Remark}
\newtheorem{definition}{Definition}
\newtheorem{example}{Example}
\newtheorem{proposition}{Proposition}
\newtheorem{conjecture}{Conjecture}

\tcbuselibrary{skins}
\tcbuselibrary{theorems}

\DeclareDelimFormat{multicitedelim}{\addsemicolon\space}

\title{An Infinite Family of Apéry-like Recurrences for Euler's Constant}

\author{Rotem Kalisch \and Tali Monderer \and Elyasheev Leibtag \and Shachar Weinbaum \and Michael Shalyt \and Ido Kaminer}
\date{}
\begin{document}

\maketitle

\section{Abstract}

Euler's constant $\gamma$, also known as the Euler–Mascheroni constant, remains one of the most elusive constants in mathematics.
Despite extensive study, its irrationality is still unproven, and Ap\'ery-like approximations of it are remarkably scarce.

We present a novel identity that expresses $\gamma$ in terms of a Meijer G-function and its first few derivatives. This identity is then generalized and used to systematically generate \textit{infinitely many} Apéry-like approximations of $\gamma$ using a Conservative Matrix Field construction based on the Meijer G-function.

\section{Introduction}

The study of Diophantine approximations has long been motivated by the desire to understand the arithmetic nature of fundamental constants.
Among the various tools used in related research, \emph{Ap\'ery-like approximations} (sequences generated by a linear recurrence with polynomial coefficients, also known as \emph{holonomic sequences}, \emph{D-Finite sequences} or \emph{P-Recursive sequences} \cite{stanley1980differentiably, lipshitz1989d, zeilberger1990holonomic}) have dominated the field in recent years.
Their appeal stems from their connection to linear differential equations, for which many results regarding generating functions and asymptotics have been discovered.

A landmark example is Apéry’s proof of the irrationality of $\zeta(3)$, which involved a fast converging holonomic sequence \cite{apery1979irrationalite, van1979proof}.
Inspired by this success, researchers have sought similar sequences that converge to other constants of interest \cite{ball2001irrationalite, zudilin2002ap, guillera2006generators, raayoni2019ramanujan, calegari2024linear}.
Indeed, for fundamental constants such as $e$, $\pi$, $\zeta(2)$, and $\zeta(3)$, many such sequences are known and their irrationality has been rigorously proven.

Despite a large effort by the mathematical community, Euler's constant $\gamma$ \cite{euler1740progressionibus, euler1785numero, mascheroni1790adnotationes} remains notoriously elusive.
Not only is there no proof of its irrationality, but there are also very few known Ap\'ery-like recurrences capable of approximating it in a similar manner \cite{aptekarev2009linear, pilehrood2013continued, rivoal2009rational}.

A foundational result in the study of Euler's constant $\gamma$, also involving the Gompertz constant $\delta$, was established by Shidlovskii \cite{shidlovskii1962, mahler2006lectures}, who studied algebraic independence of E-function values and, as a corollary, proved that at least one of these constants must be irrational. 
This result was improved by Rivoal, who showed that at least one of them is transcendental \cite{rivoal2012arithmetic}.

A significant advancement by Aptekarev \cite{aptekarev2007volume, aptekarev2009linear} yielded the first Apéry-like approximation for \(\gamma\) in the form of a third-order recurrence. The same recurrence also approximates \(\delta\) using a different ratio of two linearly independent solutions.
% Shortly after, Tulyakov \cite{tulyakov2009system} derived a system of related recurrences that recast these approximations into a new structural form.

Rivoal \cite{rivoal2009rational} then developed a different approximation based on simultaneous Padé approximants, constructing a third-order Apéry-like recurrence for \(\gamma+\log(x)\), in particular recovering Euler's constant when \(x=1\). He also constructed a sixth-order recurrence
giving simultaneous rational approximations to \(\gamma\) and
\(\zeta(2)-\gamma^2\).

Concurrently, Hessami Pilehrood and Hessami Pilehrood
\cite{pilehrood2013continued} studied related constructions and gave a Meijer G-function formulation, producing a second-order inhomogeneous linear recurrence that similarly converges to \(\gamma\) or \(\delta\).

In his survey on \(\gamma\), Lagarias \cite{lagarias2013euler} emphasizes the novelty of these constructions: \emph{only these explicit holonomic recurrences converging to \(\gamma\) are known in the literature}.

In this work, we propose and demonstrate a systematic approach for generating holonomic sequences that converge to $\gamma$, enabled by a unique connection between the Meijer G-function and $\gamma$.

By applying the contiguous relations of the Meijer G-Function we construct a \emph{Conservative Matrix Field} (CMF) \cite{elimelech2024algorithm, weinbaum2025conservative} of rank $r \in \mathbb{Z}_{\ge3}$ and dimension $d=2r-1$.
The CMF serves as a generator with an infinite parameter space, where every choice of source and trajectory $\mathbf{s}, \mathbf{t} \in \mathbb{Z}^{d}$ yields a \emph {Trajectory Matrix} $\mathcal{M}_{\mathbf{s}, \mathbf{t}}(n) \in \text{GL}_r(\mathbb{Q}(n))$ that represents a linear difference system.

We derive two initial value vectors $\mathbf{p}_r, \mathbf{q}_r\in \mathbb{Z}^r$ using a novel identity between the Meijer G-function and $\gamma$, and construct an infinite family of trajectories $\mathcal{T}^*_3 \subset \mathbb{Z}^{5}$ such that for any trajectory $\mathbf{t} \in \mathcal{T}^*_3$ and for any generic vector $\mathbf{v} \in \mathbb{R}^r$, the dual limit of the CMF (\cref{sec:background-and-notation}) satisfies:
\[
\mathcal{D}_{\mathbf{0}, \mathbf{t}}^{\mathcal{M}}(\mathbf{p}_r,\mathbf{q}_r,\mathbf{v}) = \lim_{n\to\infty} \frac{\mathbf{p}_r^T \cdot \Phi_{\mathcal{M}_{\mathbf{0}, \mathbf{t}}}(n)^{-T}\cdot \mathbf{v}}{\mathbf{q}_r^T\cdot\Phi_{\mathcal{M}_{\mathbf{0}, \mathbf{t}}}(n)^{-T}\cdot \mathbf{v}} = \gamma,
\]
where the transition matrix of $M(n) \in  \text{GL}_r(\mathbb{Q}(n))$ is denoted by $\Phi_M(n):=M(0) \cdot \ldots \cdot M(n-1)$. 

We additionally propose a broader family of trajectories $\mathcal{T}_r$ for arbitrary rank $r\geq3$, containing $\mathcal{T}_3^*$ when $r=3$, whose convergence is conjectured and supported by numerical evidence.

Our evaluation at the origin is achieved through the composition of two mechanisms, operating on the state vector of the CMF, denoted $\mathcal{S}_r(\mathbf{s})$, at the origin:
\begin{enumerate}
\item \textbf{The Algebraic Component} (\cref{sec:g-function-identity}): We derive two initial value vectors $\mathbf{p}_r, \mathbf{q}_r \in\mathbb{Z}^r$ such that
\[
\frac{\mathbf{p}_r^T\mathcal{S}_r(\mathbf{0})}{\mathbf{q}_r^T\mathcal{S}_r(\mathbf{0})}=\gamma.
\]

\item \textbf{The Dynamical Component} (\cref{sec:dual-limit-convergence}): For any trajectory $\mathbf{t} \in \mathcal{T}^*_3$ and generic vector $\mathbf{v} \in \mathbb{R}^r$, the dual system of the trajectory matrix isolates the initial state vector $\mathcal{S}_r(\mathbf{0})$ up to scaling. More precisely, there exists a sequence $c_n\in\mathbb{R}\setminus\{0\}$ such that
\[
\lim_{n\to\infty}
c_n\Phi_{\mathcal{M}_{\mathbf{0}, \mathbf{t}}}(n)^{-T}\mathbf{v}
=
\mathcal{S}_r(\mathbf{0}).
\]
\end{enumerate}

For example (derived in \cref{sec:examples}), the linear difference system $\mathbf{x}_{n+1}^T=\mathbf{x}_n^T M(n)$ defined by
\[
M(n) = \begin{bmatrix} 
    1 & n^{3} & (n-1)(3n+3)(n^2+n+1) \\
    2 & 1 - 3 n^{2} & - \left(n + 1\right) \left(8 n^{2} + n + 5\right)\\
    1 & 3 n + 2 & 6 n \left(n + 1\right)
\end{bmatrix}
\]

and initialized by a generic seed (such as $\mathbf{v}^T = \begin{pmatrix} 0 & 0 & 1 \end{pmatrix}$) satisfies
\[
\lim_{n\to\infty} \begin{bmatrix} 1 & 1 & 0 \\ 1 & 1 & 1 \end{bmatrix} \cdot \Phi_M(n)^{-T} \cdot \mathbf{v} \propto \begin{bmatrix} \gamma \\ 1 \end{bmatrix}.
\]

Each linear difference system can then be converted into an Ap\'ery-like recurrence of order $r$ using the cyclic vector method \cite{bostan2013complexity}, for which a gauged set of initial values yields the same approximation.

Even though empirical analysis indicates that the convergence of our approximations is insufficient to yield an irrationality proof for $\gamma$, we hope that broadening the space of such sequences and analyzing their matrix properties may ultimately contribute to a proof of the irrationality of $\gamma$.

The construction presented in this paper is summarized in a four-stage symbolic algorithm (\cref{fig:algorithm-pipeline}), which transforms analytic contiguous relations into discrete dynamical systems.
\begin{enumerate}
  \item Construct a CMF for the Meijer G-function of rank $r \ge 3$ (\cref{sec:meijer-g-cmf}).
  \item Derive the initial values $\mathbf{p}_r, \mathbf{q}_r \in \mathbb{Z}^r$ (\cref{sec:g-function-identity}).
  \item Choose a trajectory \(\mathbf{t} \in \mathcal{T}_r\) (\cref{sec:dual-limit-convergence}).
  \item Convert \(\mathcal{M}_{\mathbf{s}, \mathbf{t}}(n)^{-T}\) into a holonomic sequence using the cyclic vector method (\cref{def:cyclic-vector-method}).
\end{enumerate}

\begin{figure}[H]
  \centering
  \begin{tikzpicture}[
    % Define a global style for the boxes
    box/.style={
        draw=blue!40!black,       % Elegant dark slate-blue border
        fill=blue!3,              % Very subtle light blue fill
        rounded corners=4pt,      % Slightly sharper, more modern corners
        thick,
        align=center,             % Automatically centers multi-line text
        minimum width=3.4cm,      % Uniform width
        minimum height=1.6cm,     % Uniform height
        font=\small\bfseries,
        inner sep=2mm
    },
    % Define a global style for the arrows using universally supported stealth
    >=stealth,
    arrow/.style={
        ->,                       % Standard directed arrow
        thick,
        draw=black!70             % Slightly softened black for elegance
    }
]

  % Place nodes at absolute coordinates to prevent library conflicts
  \node[box] (b1) at (0, 0) {Construct \\ CMF \\[1.5mm] \normalfont\scriptsize $\mathcal{M}(z)$};
  \node[box] (b2) at (4.5, 0) {Derive the \\ initial values \\[1.5mm] \normalfont\scriptsize $\mathbf{p}_r, \mathbf{q}_r \in \mathbb{Z}^r$};
  \node[box] (b3) at (9.0, 0) {Select converging \\ trajectory \\[1.5mm] \normalfont\scriptsize $t \in \mathcal{T}_r$};
  \node[box] (b4) at (13.5, 0) {Convert to \\ Holonomic \\[1.5mm] \normalfont\scriptsize $\sum_{i=1}^{r} a_i(n)p(n-i)=0$};

  % Draw the arrows connecting the nodes dynamically
  \draw[arrow] (b1) -- (b2);
  \draw[arrow] (b2) -- (b3);
  \draw[arrow] (b3) -- (b4);

\end{tikzpicture}
  \caption{The symbolic algorithm for generating holonomic sequences converging to $\gamma$.}
  \label{fig:algorithm-pipeline}
\end{figure}

\section{Background and Notation}
\label{sec:background-and-notation}

\subsection{Special Functions}

\begin{definition}
Euler's constant, also known as the Euler-Mascheroni constant, is defined as~\cite{lagarias2013euler}:
$$ \gamma = -\int_{0}^{\infty} e^{-x} \ln x \, dx = 0.57721... $$
\end{definition}

\begin{definition}[Gamma Function]
\label{def:gamma-function}
The gamma function \(\Gamma(z)\) is defined for \(\operatorname{Re}(z)>0\) by
\[
\Gamma(z)=\int_0^\infty t^{z-1}e^{-t}\,dt.
\]
It extends meromorphically to the complex plane, satisfying
\( \Gamma(z+1)=z\Gamma(z)\),
and more specifically \( \Gamma(n)=(n-1)! \) for any \(n\in\mathbb{N}\).
It has simple poles at the non-positive integers.
\end{definition}

\begin{definition}[Digamma and Trigamma Functions]
\label{def:digamma-function}
The digamma function is the logarithmic derivative of the gamma function:
\[
\psi(z)=\frac{d}{dz}\log\Gamma(z)=\frac{\Gamma'(z)}{\Gamma(z)}.
\]
Its derivative \( \psi_1(z)=\psi'(z) \) is called the trigamma function.
\end{definition}

\begin{lemma}[Gamma Asymptotics]
\label{lem:gamma-asymptotics}
Uniformly as \(|z|\to\infty\) in any closed sector avoiding the negative real
axis, the principal branch of the logarithm satisfies
\[
\log\Gamma(z)
=
\left(z-\frac12\right) \log z-z+\frac12\log(2\pi)
+\mathcal{O}(z^{-1}).
\]
Consequently,
\[
\log\Gamma(z)=z\log z-z+\mathcal{O}(\log |z|),
\]
\[
\psi(z)=\log z+\mathcal{O}(z^{-1}),
\]
and
\[
\psi_1(z)=z^{-1}+\mathcal{O}(z^{-2}).
\]
Moreover, for fixed \(\sigma\) in a bounded interval,
\[
|\Gamma(\sigma+it)|
=
\mathcal{O}\left(
|t|^{\sigma-\frac12}e^{-\frac{\pi}{2}|t|}
\right)
\qquad (|t|\to\infty).
\]
\end{lemma}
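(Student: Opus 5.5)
This is the classical Stirling expansion, so the plan is to derive it from a Binet-type integral representation of $\log\Gamma$ and then read off the consequences. For $\operatorname{Re} z>0$ I will use Binet's first formula,
\[
\log\Gamma(z)=\left(z-\tfrac12\right)\log z - z + \tfrac12\log(2\pi) + \int_0^\infty \left(\frac{1}{e^t-1}-\frac1t+\frac12\right)\frac{e^{-zt}}{t}\,dt .
\]
I will either cite it or derive it from Malmstén's integral for $\log\Gamma$ together with the Frullani integral. The bracketed function $\phi(t)$ is bounded on $(0,\infty)$ and satisfies $\phi(t)=t/12+\mathcal{O}(t^3)$ near $0$. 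Hence $\phi(t)/t$ is bounded, and the remainder $\mu(z)$ obeys $|\mu(z)|\le C/\operatorname{Re} z$.

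To cover an arbitrary closed sector $|\arg z|\le \pi-\delta$, I will rotate the contour to the ray $\arg t=-\theta$ with $|\theta|<\pi/2$. The integrand is analytic near the positive real axis and has poles only at $2\pi i k$, $k\neq 0$, so this is legitimate. The rotated integral gives the bound $\mathcal{O}(1/|z|)$ uniformly for $|\arg(z e^{-i\theta})|\le \pi/2-\delta/2$. Covering the sector with finitely many such rotations gives the first display uniformly. Alternatively, I can prove the right half-plane case and push it outward with the recurrence $\log\Gamma(z)=\log\Gamma(z+N)-\sum_{k<N}\log(z+k)$. The contour rotation is the cleaner route, and checking its uniformity near the boundary of the sector is the main technical point.

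The remaining claims follow from this. First, $(z-\tfrac12)\log z - z+\mathcal{O}(1) = z\log z - z + \mathcal{O}(\log|z|)$ is immediate. For $\psi$ and $\psi_1$, formally differentiating an $\mathcal{O}$-term is not allowed. Instead I will apply Cauchy's estimate to the holomorphic remainder $\mu(z)$ on a disc of radius $c|z|$ contained in a slightly larger sector. This gives $\mu'(z)=\mathcal{O}(z^{-2})$ and $\mu''(z)=\mathcal{O}(z^{-3})$. Differentiating the main term yields $\psi(z)=\log z-\tfrac{1}{2z}+\mathcal{O}(z^{-2})=\log z+\mathcal{O}(z^{-1})$. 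Differentiating once more gives $\psi_1(z)=z^{-1}+\tfrac{1}{2z^2}+\mathcal{O}(z^{-3})=z^{-1}+\mathcal{O}(z^{-2})$.

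For the vertical-line bound, I take $z=\sigma+it$ with $\sigma$ in a bounded interval and $|t|\to\infty$. Such $z$ lie in a sector avoiding the negative axis, say $|\arg z|\le 3\pi/4$, for $|t|$ large. I then take real parts of the first display:
\[
\operatorname{Re}\left[\left(z-\tfrac12\right)\log z - z\right]=\left(\sigma-\tfrac12\right)\log|z| - t\arg z - \sigma .
\]
As $|t|\to\infty$, $\log|z|=\log|t|+\mathcal{O}(t^{-2})$ and $\arg z=\operatorname{sgn}(t)\,\pi/2-\sigma/t+\mathcal{O}(t^{-3})$. Therefore $t\arg z=\tfrac{\pi}{2}|t|-\sigma+\mathcal{O}(t^{-2})$. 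Substituting, the real part equals $(\sigma-\tfrac12)\log|t|-\tfrac{\pi}{2}|t|+\mathcal{O}(1)$, with the constant uniform in bounded $\sigma$. Exponentiating gives $|\Gamma(\sigma+it)|=\mathcal{O}(|t|^{\sigma-1/2}e^{-\pi|t|/2})$, which is in fact sharp up to the constant $\sqrt{2\pi}$.
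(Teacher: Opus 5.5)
Your proof is correct. It differs from the paper only in that the paper proves nothing here: it cites the Stirling, digamma and polygamma expansions in DLMF \S5.11(i) and \S5.15, whereas you derive all four statements from Binet's first formula.

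\begin{itemize}
\item Rotating the Binet integral to rays $\arg t=-\theta$ with $|\theta|<\pi/2$, and covering the sector by finitely many such rays, gives the uniform $\mathcal{O}(1/|z|)$ remainder.
\item Cauchy's estimate on discs of radius proportional to $|z|$ correctly replaces the illegitimate differentiation of an $\mathcal{O}$-term. It gives the $\psi$ and $\psi_1$ statements, even with the next terms $-\tfrac{1}{2z}$ and $\tfrac{1}{2z^2}$.
\item Your real-part computation yields the sharp form $|\Gamma(\sigma+it)|\sim\sqrt{2\pi}\,|t|^{\sigma-1/2}e^{-\pi|t|/2}$, uniformly in bounded $\sigma$.
\end{itemize}

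The citation buys brevity. Your derivation makes visible exactly what the appendix relies on: uniformity on compact subsets of the slit plane after rescaling, and the legitimacy of the derivative expansions.

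One step you should state explicitly. Each rotated integral agrees with the original on the overlap of the two half-planes, because the arc at infinity vanishes there ($\operatorname{Re}(zt)\to+\infty$). The identity theorem on $\mathbb{C}\setminus(-\infty,0]$ then extends the formula. This step also fixes the meaning of $\log\Gamma$: it is the branch analytic on the slit plane and real on $(0,\infty)$. With the literal principal logarithm of $\Gamma(z)$ the first display would be false. At $z=it$ the imaginary part of the right-hand side grows like $t\log t$, while the principal logarithm has imaginary part in $(-\pi,\pi]$.
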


\begin{proof}
These are the standard Stirling, digamma, and polygamma asymptotic estimates;
see \cites[Section 5.11(i)]{NIST:DLMF}[Section 5.15]{NIST:DLMF}.
\end{proof}

\begin{definition}[D-finite function]
Let \(\mathbb{K}\) be a differential field.
A function \(f(x)\) is called \emph{D-finite} (or \emph{holonomic}) over \(\mathbb{K}\) if there exist polynomials \(p_0(x),\dots,p_r(x)\in\mathbb{K}[x]\), not all zero, such that
\[
p_0(x)\,f(x) \;+\; p_1(x)\,f'(x) \;+\; \cdots \;+\; p_r(x)\,f^{(r)}(x) \;=\; 0,
\]
for some \(r\in\mathbb{N}_{0}\). The minimal such \(r\) is the \emph{order} of \(f\).
\end{definition}

\begin{definition}[Meijer G-function]
\label{def:meijer-g}
Let $m, n, p, q \in \mathbb{N}_0$ such that $0 \leq n \leq p$ and $0 \leq m \leq q$.

The Meijer G-function~\parencites[§16.17, Eq.~(16.17.1)]{NIST:DLMF}[\S5.8]{LukeYudellL1969Tsfa}
 is a special function defined by the following contour integral:
$${G}_{p,q}^{m,n} \left( \begin{matrix} a_1, \dots, a_p \\ b_1, \dots, b_q \end{matrix} \,\middle|\, z \right) = \frac{1}{2\pi i} \int_L \frac{ \prod_{j=1}^m \Gamma(b_j - s) \prod_{j=1}^n \Gamma(1 - a_j + s) }{ \prod_{j=m+1}^q \Gamma(1 - b_j + s) \prod_{j=n+1}^p \Gamma(a_j - s) } z^s \, ds.$$
Where $\Gamma(n)$ is the gamma function.
The choice of contour $L$ depends on the specific parameters of the G-function (see Fig. \ref{fig:meijerG-contours}, adapted from the \emph{NIST Digital Library of Mathematical Functions} (DLMF), Fig.~\href{https://dlmf.nist.gov/16.17.F1}{16.17.1}, Release 1.2.4 (2025-03-15)).
There are three options for the contour, and the definition dictates to choose the contour for which it converges (if it converges for more than one, then the different options agree):

\begin{enumerate}
    \item \(L\) goes from \(-i\cdot\infty\) to \(i\cdot\infty\); the integral converges if \(p+q<2(m+n)\) and \(\lvert\arg z\rvert<(m+n-\tfrac12(p+q))\pi\).
    \item \(L\) is a loop source at infinity on a line parallel to the positive real axis, encircling the poles of \(\Gamma(b_\ell-s)\) once in the negative sense and returning to infinity on another line parallel to the positive real axis; the integral converges for all \(z\neq0\) if \(p<q\), and for \(0<\lvert z\rvert<1\) if \(p=q\ge1\).
    \item \(L\) is a loop source at infinity on a line parallel to the negative real axis, encircling the poles of \(\Gamma(1-a_j+s)\) once in the positive sense and returning to infinity on another line parallel to the negative real axis; the integral converges for all \(z\) if \(p>q\), and for \(\lvert z\rvert>1\) if \(p=q\ge1\).
\end{enumerate}

\begin{figure}[h]
  \centering
  \begin{subfigure}[t]{0.32\textwidth}
    \centering
    \includegraphics[width=\linewidth]{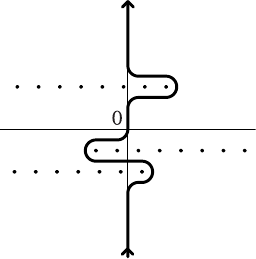}
    \caption{Case (i)}
  \end{subfigure}\hfill
  \begin{subfigure}[t]{0.32\textwidth}
    \centering
    \includegraphics[width=\linewidth]{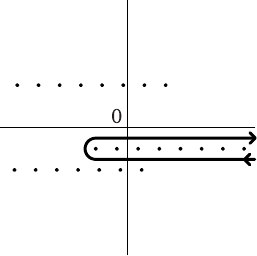}
    \caption{Case (ii)}
  \end{subfigure}\hfill
  \begin{subfigure}[t]{0.32\textwidth}
    \centering
    \includegraphics[width=\linewidth]{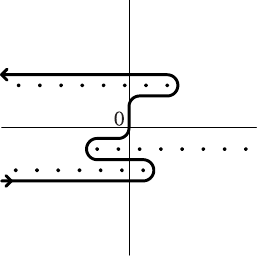}
    \caption{Case (iii)}
  \end{subfigure}
  \caption{\footnotesize
    Contours $L$ in the $s$-plane for the integral representation of the Meijer $G$-function. 
    (i) A vertical contour separating the poles of $\Gamma(b_j - s)$ and $\Gamma(1 - a_j + s)$, used when $p+q<2(m+n)$. 
    (ii) A contour looping around the poles of $\Gamma(b_j - s)$ (negative orientation), used when $p\le q$. 
    (iii) A contour looping around the poles of $\Gamma(1 - a_j + s)$ (positive orientation), used when $p\ge q$. 
    In each case, the contour is chosen so that the integral converges, and different valid choices yield the same function. 
    Adapted from the NIST Digital Library of Mathematical Functions, Fig.~16.17.1.
    }
  \label{fig:meijerG-contours}
\end{figure}

The Meijer G-function is D-finite. It satisfies the following differential equation:
{\small
\[
\left[\,(-1)^{p-m-n}\, z \prod_{j=1}^{p}\!\left( z\frac{d}{dz} - a_j + 1 \right)
\;-\; \prod_{j=1}^{q}\!\left( z\frac{d}{dz} - b_j \right)\right] G^{m, n}_{p, q}\left(\begin{matrix} a_1,\dots,a_p \\ b_1,\dots, b_q\end{matrix};z\right) = 0.
\]

}
\end{definition}

\subsection{Difference Equations and Holonomic Sequences}

\begin{definition}[Linear Difference System]
\label{def:linear-difference-system}
Let $r \in \mathbb{N}$. A homogeneous first-order linear difference system is defined by the recurrence relation:
\[
    \mathbf{x}({n+1})^T = \mathbf{x}(n)^T M(n), \quad n \in \mathbb{Z}
\]
where $\mathbf{x}(n) : \mathbb{Z} \to \mathbb{C}^r$ is the solution at step $n$, and $M(n): \mathbb{Z} \to \mathrm{GL}_r(\mathbb{Q}(n))$ is a rational invertible matrix \footnote{Most sources define systems using column vectors and left-sided multiplication: $\mathbf{x}(n+1)=M(n)\mathbf{x}(n)$. These definitions are equivalent up to a transpose operation}.
\end{definition}

\begin{definition}[Transition Matrix]
\label{def:fundamental-matrix}
The transition matrix is defined as the following product:
$$\Phi_M(n) := M(0) \cdot M(1) \cdot \dotsc \cdot M(n-1)$$
The solution of the linear difference system at step $n$ is then given as $\mathbf{x}(n)^T = \mathbf{x}(0)^T \cdot\Phi_M(n) $, where $\mathbf{x}(0)$ is the initial values vector at step 0.
\end{definition}

\begin{definition}
The dual system of a linear difference system defined by $M(n)$ is the linear difference system defined by $M(n)^{-T}$. Its transition matrix satisfies $$\Phi_{M^{-T}}(n)=M(0)^{-T} \cdot\dotsc\cdot M(n-1)^{-T}=\big(M(0) \cdot\dotsc\cdot M(n-1)\big)^{-T}=\Phi_M(n)^{-T}.$$
\end{definition}

\begin{definition}[Holonomic Sequences and Ap\'ery-like Recurrences]
Let $\mathbb{K}$ be a field of characteristic zero. A sequence $p: \mathbb{Z} \to \mathbb{Q}$ is said to be \textbf{holonomic} of order $r$ if there exist polynomials $a_0(n), \dots, a_r(n) \in \mathbb{K}[n]$ with $a_0(n) \neq 0$ and $a_r(n) \neq 0$ such that for any $n\geq0$:
\[
    \sum_{i=0}^{r} a_i(n) p(n-i) = 0.
\]

The recurrence it satisfies is called an Ap\'ery-like recurrence (or a P-recursive equation).
\end{definition}

\begin{remark}
\label{rem:companion-form}
An Ap\'ery-like recurrence $\sum_{i=0}^{r}a_{i}(n)p(n-i) = 0$ can be represented using a companion matrix form:

$$
C(n)=\begin{bmatrix}
    0 & 0 & \hdots& 0 & 0 & -\frac{a_{r}(n)}{a_0(n)} \\
    1 & 0 & \hdots & 0 & 0 & -\frac{a_{r-1}(n)}{a_0(n)} \\
    0 & 1 & \hdots & 0 & 0 & -\frac{a_{r-2}(n)}{a_0(n)} \\
    \vdots & \vdots & & \vdots & \vdots &\vdots \\
    0 & 0 & \hdots & 1 & 0 & -\frac{a_2(n)}{a_0(n)} \\
    0 & 0 & \hdots & 0 & 1 & -\frac{a_1(n)}{a_0(n)}
\end{bmatrix}
$$

With a transition matrix of the form:
$$
\Phi_C(n) =
\begin{bmatrix}
    p_{1}({n - r}) & p_{1}({n + 1 - r}) & \hdots & p_{1}({n-1}) \\
    p_{2}({n - r}) & p_{2}({n + 1 - r}) & \hdots & p_{2}({n-1}) \\
    \vdots & \vdots & & \vdots & \\
    p_{r}({n - r}) & p_{r}({n + 1 - r}) & \hdots & p_{r}({n-1}) \\
\end{bmatrix}$$

\end{remark}

\begin{corollary}[Reduction to First-Order System]
\label{cor:holonomic-is-equation-system}
Every Ap\'ery-like recurrence of order $r$ can be interpreted as a first-order matrix system of dimension $r$ defined by its companion form matrix.
\end{corollary}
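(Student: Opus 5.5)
The plan is to realize the recurrence as a first-order system through the companion matrix $C(n)$ of \cref{rem:companion-form}, and then to check that the two descriptions carry the same data. Given an Apéry-like recurrence $\sum_{i=0}^{r} a_i(n)p(n-i)=0$ with $a_0(n)\neq 0$, I will solve for the leading term,
\[
p(n) = -\sum_{i=1}^{r}\frac{a_i(n)}{a_0(n)}\,p(n-i).
\]
I will then define the state row vector $\mathbf{x}(n)^T = \begin{pmatrix} p(n-r) & p(n-r+1) & \cdots & p(n-1)\end{pmatrix}$, so that $\mathbf{x}(n)$ records the $r$ most recent values of the sequence.

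The main step is to verify directly that $\mathbf{x}(n+1)^T = \mathbf{x}(n)^T C(n)$, using the row-vector convention of \cref{def:linear-difference-system}. Multiplying on the right by $C(n)$, the subdiagonal ones shift the entries $p(n-r+1),\dots,p(n-1)$ one position to the left. The last column produces $-\sum_{i=1}^{r} \frac{a_i(n)}{a_0(n)}\,p(n-i)$, and by the solved form above this is exactly $p(n)$. Conversely, any solution of the system $\mathbf{x}(n+1)^T=\mathbf{x}(n)^T C(n)$ yields a sequence satisfying the recurrence: read off its last coordinate (equivalently, its first coordinate, shifted). This gives a bijection between solutions of the recurrence and solutions of the first-order system of dimension $r$.

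I will then record the remaining points needed for the statement. First, $\det C(n) = \pm a_r(n)/a_0(n)$. Since $a_r\neq 0$, this determinant is nonzero as a rational function, so $C(n)\in \mathrm{GL}_r(\mathbb{Q}(n))$ as \cref{def:linear-difference-system} requires, or $\mathrm{GL}_r(\mathbb{K}(n))$ in general. Second, iterating the relation via \cref{def:fundamental-matrix} and stacking $r$ independent solutions $p_1,\dots,p_r$ as rows recovers the stated form of $\Phi_C(n)$, with row $j$ equal to $(p_j(n-r),\dots,p_j(n-1))$.

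I do not expect a real obstacle. The only care needed is at the finitely many integers $n$ where $a_0(n)=0$ or $a_r(n)=0$. There the matrix $C(n)$ is singular or undefined, so the correspondence holds as an identity over $\mathbb{K}(n)$, and pointwise only away from those integer roots. One can avoid them by shifting the starting index.
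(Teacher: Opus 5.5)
Your proposal is correct and follows the paper's own implicit route: the paper gives no separate argument and reads the corollary directly off the companion matrix of \cref{rem:companion-form}, so your checks that $\mathbf{x}(n)^T C(n)=\mathbf{x}(n+1)^T$ and $\det C(n)=(-1)^r a_r(n)/a_0(n)\neq 0$ fill in what the paper leaves unstated. One small precision: stacking solutions reproduces $\Phi_C(n)$ itself only for the basis with initial rows $(p_j(-r),\dots,p_j(-1))=\mathbf{e}_j^T$ (since $\Phi_C(0)=I$), whereas a general independent basis with initial matrix $X(0)$ gives $X(0)\,\Phi_C(n)$.
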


\begin{definition}[Cyclic Vector Method]
\label{def:cyclic-vector-method}
Given a matrix $M(n) \in \mathrm{GL}_r(\mathbb{Q}(n))$ defining a
\emph{linear difference system}, the \emph{cyclic vector method} uncoupling
algorithm \cite[Theorem B.2]{hendricks1999solving} finds a matrix
$U(n) \in \mathrm{GL}_r(\mathbb{Q}(n))$ such that
\[
C(n) \coloneq U(n)^{-1} M(n) U(n+1)
\]
is a companion form matrix representing an Ap\'ery-like recurrence, where
the rightmost column is made of rational functions in $n$.

If $U(n)$ is regular and invertible at every $n\in\mathbb{N}_0$, the two
systems are gauge-equivalent along the entire forward orbit, and their
transition matrices satisfy
\[
\Phi_C(n)=U(0)^{-1}\Phi_M(n)U(n).
\]
\end{definition}

\begin{definition}[Formal Solutions and Asymptotic Ordering]
\label{def:asymptotics}
Let $r \in \mathbb{N}$ and $M \in \mathrm{GL}_r(\mathbb{Q}(n))$.
By the classical asymptotic theory of linear difference equations
\cites{birkhoff1930formal}{birkhoff1933analytic}[\S2.4]{kauers2023dfinite}, the system
\[
\mathbf{x}(n+1)^T=\mathbf{x}(n)^T M(n)
\]
admits $r$ linearly independent formal generalized-series solutions
$\mathbf{u}_1(n),\dots,\mathbf{u}_r(n)$ of Birkhoff-Trjitzinsky type, with
\[
\mathbf{u}_i(n)
\sim
n!^{\kappa_i}\lambda_i^n
\exp\left(
\sum_{k=1}^{\rho_i-1}
\mu_{i,k} n^{k/\rho_i}
\right)
n^{\alpha_i}
\sum_{j=0}^{m_i}(\log n)^j
\sum_{k=0}^{\infty}
\mathbf{c}_{i,j,k}n^{-k/\rho_i}.
\]

We order these formal solutions by their asymptotic magnitude so that
\[
\|\mathbf{u}_{i+1}(n)\|
=
\mathcal{O}\big(\|\mathbf{u}_i(n)\|\big)
\qquad
\text{for all } i\in\{1,\dots,r-1\}.
\]

Moreover, there exists a basis of linearly independent solutions $\mathbf{x}_1(n),\dots,\mathbf{x}_r(n)$ satisfying
\[
\mathbf{x}_i(n)\sim\mathbf{u}_i(n), \qquad i\in\{1,\dots,r\}.
\]

The solution $\mathbf{x}_1(n)$ is termed a \emph{dominant solution}, while
$\mathbf{x}_r(n)$ is termed a \emph{recessive solution}.
\end{definition}

\begin{remark}[Asymptotics of Ap\'ery-like Recurrences]
\label{rem:recurrence-asymptotics}
Since every Ap\'ery-like recurrence of order $r$ corresponds to a companion
system of dimension $r$ (\Cref{cor:holonomic-is-equation-system}), the same
asymptotic theory applies directly to scalar recurrences.

The algorithmic construction of these expansions - determining the precise values of the exponential growth rates, algebraic factors, and logarithmic terms - is well-established.
We refer to the foundational work by Wimp and Zeilberger \cite{wimp1985resurrecting}, the \texttt{Asymptotics} package developed by the RISC group \cite{kauers2011mathematica}, and our \texttt{ramanujantools} package \cite{https://doi.org/10.5281/zenodo.15747099} for the effective computation of these expansions in the scalar case.
\end{remark}

\begin{definition}[Generic Vector]
\label{def:generic-vector}
Let $\mathbf{x}(n+1)^T = \mathbf{x}(n)^T M(n)$ be a linear difference system of dimension $r$, with $\mathbf{x}_1(n), \dots, \mathbf{x}_r(n)$ a basis of sequence solutions ordered by asymptotic magnitude, and suppose the first $k$ solutions span its dominant asymptotic subspace.

Every $\mathbf{v}\in\mathbb{C}^r$ admits a unique decomposition
\(
\mathbf{v} = \sum_{i=1}^r c_i\mathbf{x}_i(0).
\)

We say that $\mathbf{v}$ is \emph{generic} to the system if
\(
(c_1,\dots,c_k)\neq \mathbf{0}.
\)
\end{definition}

\begin{definition}[Unique Recessive Asymptotic]
\label{def:unique-recessive}
Let $r \in \mathbb{N}, M \in  \text{GL}_r(\mathbb{Q}(n))$ and consider the linear difference system:
$$\mathbf{x}(n+1)^T = \mathbf{x}(n)^T M(n)$$
Denote the formal solutions of the system as $\mathbf{u}_1(n), \dots, \mathbf{u}_r(n)$, ordered by their asymptotic growth such that for all $i \in \{1, \dots, r-1\}$:
\[
\|\mathbf{u}_{i+1}(n)\| = \mathcal{O}\big(\|\mathbf{u}_i(n)\|\big) \quad \text{as } n \to \infty.
\]

The system admits a \textbf{unique recessive asymptotic} if the last basis solution is strictly dominated by all others:
\[
\lim_{n\to\infty} \frac{\|\mathbf{u}_r(n)\|}{\|\mathbf{u}_i(n)\|} = 0, \quad \forall i \in \{1, \dots, r-1\}.
\]
\end{definition}

\begin{lemma}[Dual Convergence Criteria]
\label{lem:convergence-criteria}
Let $r \in \mathbb{N}, M(n) \in \text{GL}_r(\mathbb{Q}(n))$.
Assume the primal system admits a unique recessive asymptotic \(\mathbf u_r(n)\), and let \(\mathbf x_r(n)\) be a corresponding sequence solution satisfying
\[
\mathbf x_r(n)\sim\mathbf u_r(n).
\] 
Let $\Phi_M(n) = M(0) \cdot \dotsc \cdot M(n-1)$ be the transition matrix. Then, for any vector $\mathbf{v}\in\mathbb{C}^r$ generic to the dual system, the forward iteration of the dual system isolates the initial state $\mathbf{x}_r(0)$ up to scaling. That is, there exists a sequence $c_n\in\mathbb{C}\setminus\{0\}$ such that

\[
\lim_{n\to\infty}
c_n\Phi_M(n)^{-T}\mathbf{v}
=
\mathbf{x}_{r}(0).
\]

\end{lemma}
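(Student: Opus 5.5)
The plan is to relate the dual iteration to the primal fundamental matrix through a bilinear pairing that the two systems conserve. Let $X(n)$ be the $r\times r$ matrix whose $i$-th row is $\mathbf{x}_i(n)^T$, where $\mathbf{x}_1,\dots,\mathbf{x}_r$ is the asymptotic basis of the primal system from \Cref{def:asymptotics}. Then $X(n)=X(0)\Phi_M(n)$, and $X(0)$ is invertible by linear independence. Setting $\mathbf{y}(n):=\Phi_M(n)^{-T}\mathbf{v}$, we get
\[
X(n)\,\mathbf{y}(n)=X(0)\Phi_M(n)\Phi_M(n)^{-T}\mathbf{v},
\]
which is not constant. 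The better object is $\Phi_M(n)^{-T}\mathbf{v}=X(n)^{-T}X(0)^{T}\mathbf{v}$, so the whole question reduces to the asymptotics of $X(n)^{-T}$. Write $\mathbf{w}:=X(0)^T\mathbf{v}$. By Cramer's rule,
\[
X(n)^{-T}\mathbf{w}=\frac{1}{\det X(n)}\,\operatorname{Cof}(X(n))\,\mathbf{w},
\]
where the cofactor columns are exterior products of $r-1$ of the rows $\mathbf{x}_j(n)$. Rewriting with columns, $\Phi_M(n)^{-T}\mathbf{v}=\sum_i w_i\,\boldsymbol{\xi}_i(n)$, where $\boldsymbol{\xi}_i(n)$ is the $i$-th column of $X(n)^{-T}$. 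These columns form the dual basis, characterised by $\mathbf{x}_j(n)^T\boldsymbol{\xi}_i(n)=\delta_{ij}$.

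The key steps, in order, are as follows.

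\textbf{Step 1: choose the scaling.} Take $c_n$ to be the normalising factor that makes the dual solution approach $\mathbf{x}_r(0)$. The most natural route is to avoid the columns of $X(n)^{-T}$ altogether and pair the dual iterate directly against primal solutions:
\[
\mathbf{x}_j(0)^T\,\Phi_M(n)\,\Phi_M(n)^{-T}\mathbf{v}=\mathbf{x}_j(n)^T\,\mathbf{y}(n).
\]
Since $\Phi_M(n)\Phi_M(n)^{-T}$ is not the identity, this identity is not a conservation law, so I abandon it. Instead I use the cleaner duality: the dual system's solutions are $\mathbf{y}(n)=\Phi_M(n)^{-T}\mathbf{v}$, and the genuinely conserved quantity is $\mathbf{x}(n)^T\Phi_M(n)^{-1}\cdots$. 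I therefore switch to reading the claim through the conserved pairing
\[
\langle \mathbf{x}(0),\mathbf{v}\rangle=\bigl\langle \mathbf{x}(n),\,\Phi_M(n)^{-1}\mathbf{v}\bigr\rangle.
\]
Under this reading the vector that appears in the statement is handled by transposition conventions: the row convention of \Cref{def:linear-difference-system} makes $\Phi_M(n)^{-T}\mathbf{v}$ the column version of this object.

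\textbf{Step 2: express the iterate in terms of the initial states.} Expand
\[
\Phi_M(n)^{-T}\mathbf{v}=\sum_i \bigl(\mathbf{x}_i(n)^T\,\Phi_M(n)^{-T}\mathbf{v}\bigr)\,\boldsymbol{\eta}_i,
\]
where $\{\boldsymbol{\eta}_i\}$ is the basis dual to $\{\mathbf{x}_i(0)\}$. I then compute the coefficients and show that, in the direction that survives, they are dominated by the component determined by the recessive primal solution. Concretely, the dual fundamental solutions grow like $1/\mathbf{u}_i(n)$. The unique-recessive hypothesis of \Cref{def:unique-recessive} says that $1/\|\mathbf{u}_r\|$ strictly dominates every $1/\|\mathbf{u}_i\|$ for $i<r$, so the dual solution associated with $\mathbf{u}_r$ is the unique dominant one. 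Genericity (\Cref{def:generic-vector}) guarantees that $\mathbf{v}$ has a nonzero component along it.

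\textbf{Step 3: identify the limiting direction.} After dividing by the dominant growth, I need the limiting direction to be exactly $\mathbf{x}_r(0)$. This comes from the identity $\Phi_M(n)^{-T}=X(n)^{-T}X(0)^T$: the dominant part of $X(n)^{-T}$ projects onto the component paired with row $r$. The ratios of the remaining components tend to zero by $\|\mathbf{u}_r\|/\|\mathbf{u}_i\|\to0$, and these ratios are controlled by Cramer's rule through the exterior products.

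\textbf{Main obstacle.} I expect the hardest step to be the cofactor asymptotics. Turning the formal relation $\mathbf{x}_i\sim\mathbf{u}_i$ into the statement that the cofactor columns of $X(n)$ have the expected leading magnitudes requires ruling out cancellation in the $(r-1)$-fold exterior products, since formal solutions can share exponential and algebraic parts. The approach I would try first is to work with the minors of $X(n)$ and use the Birkhoff–Trjitzinsky expansions to show that the leading coefficient vectors of $\mathbf{x}_1,\dots,\mathbf{x}_r$ are linearly independent at leading order.
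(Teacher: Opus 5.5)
\textbf{Gap 1: the central identity is wrong.} From $X(n)=X(0)\Phi_M(n)$ you get $\Phi_M(n)^{-1}=X(n)^{-1}X(0)$, hence $\Phi_M(n)^{-T}=X(0)^{T}X(n)^{-T}$, not $X(n)^{-T}X(0)^{T}$. Already at $n=1$ your formula gives $X(0)^{-T}M(0)^{-T}X(0)^{T}$, which in general is not $M(0)^{-T}$.

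This is not cosmetic. In your version the iterate is a fixed combination $\sum_i w_i\boldsymbol{\xi}_i(n)$ of moving vectors. The most you could conclude is alignment with an $n$-dependent direction, namely whichever column of $X(n)^{-T}$ dominates. Step 3 therefore has no mechanism that produces the fixed vector $\mathbf{x}_r(0)$.

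The same conflation occurs in Step 1. The conserved pairing is a statement about $\Phi_M(n)^{-1}\mathbf{v}$, which is a genuine solution of the dual system. The vector $\Phi_M(n)^{-T}\mathbf{v}$ is a different vector, and no transposition convention identifies the two. There are two related slips. The columns of $X(n)^{-T}$ are not the basis dual to the rows of $X(n)$; that basis is given by the columns of $X(n)^{-1}$. The coefficients in your Step 2 expansion also use $\mathbf{x}_i(n)$ where $\mathbf{x}_i(0)$ is required.

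The correct reading is Miller's. The vector $\Phi_M(n)^{-T}\mathbf{v}=M(0)^{-T}\cdots M(n-1)^{-T}\mathbf{v}$ is the value at index $0$ of the primal solution that equals $\mathbf{v}$ at index $n$, computed by backward recursion. This is exactly the mechanism the paper invokes through Gautschi and Zahar. Concretely,
\[
\Phi_M(n)^{-T}\mathbf{v}=\sum_{i=1}^{r}\beta_i(n)\,\mathbf{x}_i(0),\qquad \sum_{i=1}^{r}\beta_i(n)\,\mathbf{x}_i(n)=\mathbf{v},\qquad \beta_i(n)=\boldsymbol{\xi}_i(n)^T\mathbf{v},
\]
where $\boldsymbol{\xi}_i(n)=\Phi_M(n)^{-1}\boldsymbol{\xi}_i(0)$ are the columns of $X(n)^{-1}$, i.e.\ the actual dual solutions. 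Only in this form do the fixed vectors $\mathbf{x}_i(0)$ appear. The claim then becomes $\beta_i(n)/\beta_r(n)\to0$ for $i<r$.

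\textbf{Gap 2: the analytic core is left open.} Even after this repair, the key step is the one you flag as the main obstacle, and your proposed resolution is not available.

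By Cramer's rule, $\beta_i(n)/\beta_r(n)$ is the determinant of the frame $(\mathbf{x}_1(n),\dots,\mathbf{x}_r(n))$ with $\mathbf{v}$ in slot $i$, divided by $\det(\mathbf{x}_1(n),\dots,\mathbf{x}_{r-1}(n),\mathbf{v})$. Hadamard's inequality bounds the numerator by $\|\mathbf{v}\|\prod_{j\neq i}\|\mathbf{x}_j(n)\|$. What you need is a lower bound $|\det(\mathbf{x}_1(n),\dots,\mathbf{x}_{r-1}(n),\mathbf{v})|\ge\epsilon_n\prod_{j<r}\|\mathbf{x}_j(n)\|$ with $\epsilon_n\|\mathbf{x}_i(n)\|/\|\mathbf{x}_r(n)\|\to\infty$.

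Genericity has to enter here. It must say that $\mathbf{v}$ stays quantitatively away from the moving hyperplane spanned by $\mathbf{x}_1(n),\dots,\mathbf{x}_{r-1}(n)$. A nonzero coefficient in a decomposition at index $0$ does not give this.

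This is also where the degeneracy of the dominant frame must be beaten by the separation rate. Leading-order independence of the Birkhoff--Trjitzinsky coefficient vectors cannot be assumed. In the paper's own application all three solutions share the leading shape $(1,n,n^2)$ (\cref{lem:asymptotic-extraction}). So $\mathbf{I}_1(n)\wedge\mathbf{I}_2(n)$ is much smaller than $\|\mathbf{I}_1(n)\|\,\|\mathbf{I}_2(n)\|$, since only subleading terms separate the two directions. One must show this loss is slower than $\|\mathbf{x}_r(n)\|/\|\mathbf{x}_i(n)\|$.

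Under the bare hypothesis $\|\mathbf{u}_r\|/\|\mathbf{u}_i\|\to0$, that rate comparison is precisely the missing input. Your heuristic that dual solutions grow like $1/\mathbf{u}_i(n)$ assumes it rather than proves it. The paper does not derive it either; it takes it from Zahar's analysis of backward recurrences.
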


\begin{proof}
This convergence behavior is the geometric mechanism underlying the stability of backward recurrence algorithms, such as Miller's algorithm \cite{gautschi1967computational}.
As established in the general analysis of these algorithms, the asymptotic hierarchy of the formal solutions inverts in the dual difference system. If a primal difference system admits a unique recessive asymptotic, this minimal solution manifests as the dominant solution of the dual system.

The explicit proof that the transition matrix of the dual system isolates the initial values of the unique recessive solution is derived in \cite[Section 2]{zahar1976mathematical}.
\end{proof}

\subsection{The Conservative Matrix Field}
\label{subsec:the-conservative-matrix-field}

\begin{definition}[Conservative Matrix Field]
\label{def:CMF}
Let $d,r\in\mathbb N$. A \emph{Conservative Matrix Field} (CMF) of
rank $r$ and dimension $d$ is defined by rational basis matrices
\[
\mathcal{M}_{\mathbf e_1}(\mathbf p),\ldots,
\mathcal{M}_{\mathbf e_d}(\mathbf p)
\in
\mathrm{GL}_r\!\bigl(\mathbb Q(\mathbf p)\bigr),
\qquad
\mathbf p\in\mathbb Z^d,
\]
satisfying the conservation property for any lattice point $\mathbf{p} \in \mathbb{Z}^d$ in which they are defined:
\[
\mathcal{M}_{\mathbf e_i}(\mathbf p)\,
\mathcal{M}_{\mathbf e_j}(\mathbf p+\mathbf e_i)
=
\mathcal{M}_{\mathbf e_j}(\mathbf p)\,
\mathcal{M}_{\mathbf e_i}(\mathbf p+\mathbf e_j),
\qquad
1\le i,j\le d.
\]
\end{definition}

\begin{remark}
CMFs are a discrete equivalent of conservative vector fields. The formal definition of CMFs is a discrete equivalent of potential, while the basis matrices $\mathcal{M}_1, \dots, \mathcal{M}_d$ are the discrete equivalent of the gradient, often more convenient for algorithmic use.
\end{remark}
\begin{remark}\label{rem:CMF-negative-matrices}
We denote the negative basis matrices as $\mathcal{M}_{-\mathbf{e}_i}(\mathbf{p}) \in \text{GL}_r(\mathbb{Q}(\mathbf{p}))$, and they satisfy
$$\mathcal{M}_{-\mathbf{e}_i}(\mathbf{p})\cdot \mathcal{M}_{\mathbf{e}_i}(\mathbf{p} - \mathbf{e}_i) = I, \quad \mathcal{M}_{\mathbf{e}_i}(\mathbf{p}) \cdot \mathcal{M}_{-\mathbf{e}_i}(\mathbf{p} + \mathbf{e}_i) = I$$
therefore, $$\mathcal{M}_{-\mathbf{e}_i}(\mathbf{p}) = \mathcal{M}_{\mathbf{e}_i}(\mathbf{p} - \mathbf{e}_i)^{-1}, \quad \mathcal{M}_{\mathbf{e}_i}(\mathbf{p}) = \mathcal{M}_{-\mathbf{e}_i}(\mathbf{p}+\mathbf{e}_i)^{-1}$$

Additionally, $M_\mathbf{0}(\mathbf{p}) = I_r$ vacuously.
\end{remark}

\begin{definition}[Trajectory Matrix]
\label{def:trajectory-matrix}
Let $\mathcal{M}$ be a CMF and let $s, t\in\mathbb{Z}^d$ (representing a source point and a trajectory). A \emph{Trajectory Matrix} $\mathcal{M}_{\mathbf{s}, \mathbf{t}}(n)  \in \mathrm{GL}_r(\mathbb{Q}(n))$ is defined as the (only) matrix satisfying $\mathcal{M}_{\mathbf{s}+n \cdot \mathbf{t}} \cdot \mathcal{M}_{\mathbf{s}, \mathbf{t}}(n) = \mathcal{M}_{\mathbf{s}+(n+1) \cdot \mathbf{t}}$
\end{definition}

\begin{tcolorbox}[
  title={Calculating \(\mathcal{M}_{\mathbf{s}, \mathbf{t}}(n)\)},
  colback=blue!4, colframe=blue!40!black
]
\textbf{Input:}
\begin{enumerate}\setlength{\itemsep}{0pt}
  \item CMF basis matrices $\{\mathcal{M}_{\mathbf{e}_i}\}_{i=1}^{d}$
  \item source, trajectory $\mathbf{s}, \mathbf{t} \in \mathbb{Z}^d$
\end{enumerate}
\textbf{Output:} The trajectory matrix $\mathcal{M}_{\mathbf{s}, \mathbf{t}}(n)$

\begin{algorithmic}
\State $\mathbf{p} \gets \mathbf{s} + n\mathbf{t}$,\quad $M \gets I_r$
\For{$i = 1$ \textbf{to} $d$}
\State $\sigma \gets \operatorname{sgn}(t_i)$
  \For{$j = 1$ \textbf{to} $|t_i|$}
    \State $M \gets M \cdot \mathcal{M}_{\sigma \mathbf{e}_i}(\mathbf{p})$
    \State $\mathbf{p} \gets \mathbf{p} + \sigma \mathbf{e}_i$
  \EndFor
\EndFor
\State \Return $M$
\end{algorithmic}
\end{tcolorbox}

\begin{remark}
A trajectory matrix defines a homogeneous linear difference system of order 1 (\cref{def:linear-difference-system})
\end{remark}

\begin{remark}
A trajectory matrix can be thought of as a one-dimensoinal CMF. Its lattice is embedded into the $d$-dimensional lattice, with its origin set at $\mathbf{s}$, and every step translates into a step in our trajectory $t \in \mathbb{Z}^d$.
\end{remark}

\begin{definition}[Primal and Dual CMF limits]
\label{def:cmf-primal-dual-limits}
Let $\mathcal{M}$ be a CMF, and let $r, d \in \mathbb{N}$.

Let $\mathbf{s}, \mathbf{t} \in \mathbb{Z}^d$ and let $\mathbf{p}, \mathbf{q}, \mathbf{v} \in \mathbb{Q}^r$.
\begin{itemize}
\item The \emph{primal CMF limit}, if exists, is the ratio 
\[
\mathcal{P}_{\mathbf{s}, \mathbf{t}}^{\mathcal{M}}(\mathbf{p},\mathbf{q},\mathbf{x}) :=
\lim_{n\to\infty} \frac{\mathbf{p}^T \cdot \Phi_{\mathcal{M}_{\mathbf{s}, \mathbf{t}}}(n)\cdot \mathbf{v}}{\mathbf{q}^T\cdot\Phi_{\mathcal{M}_{\mathbf{s}, \mathbf{t}}}(n)\cdot \mathbf{v}}
\]

\item The \emph{dual CMF limit}, if exists, is the ratio
\[
\mathcal{D}_{\mathbf{s}, \mathbf{t}}^{\mathcal{M}}(\mathbf{p}, \mathbf{q}, \mathbf{v}):=
\lim_{n\to\infty} \frac{\mathbf{p}^T \cdot \Phi_{\mathcal{M}_{\mathbf{s}, \mathbf{t}}}(n)^{-T}\cdot \mathbf{v}}{\mathbf{q}^T\cdot\Phi_{\mathcal{M}_{\mathbf{s}, \mathbf{t}}}(n)^{-T}\cdot \mathbf{v}}
\]
\end{itemize}
\end{definition}

\subsection{D-finite Conservative Matrix Fields}
\label{subsec:d-finite-cmf}

A particularly common choice of CMF is a \emph{D-finite} CMF~\cite{weinbaum2025conservative}. Many mathematical constants such as $e$, $\pi$, and $\gamma$ are deeply connected to linear differential equations, and thus arise naturally as limit values of sequences generated by such CMFs.

\begin{definition}[State vector of a D-finite function]
\label{def:state-vector}
Let \( F(\mathbf{v}; z) \) be a D-finite function, and let \( \theta = z\cdot \frac{d}{dz} \) denote the Euler operator. Then the \emph{state vector} associated with \( F \) is defined as
\[
\mathcal{S}(\mathbf{v}; z) :=
\begin{pmatrix}
F(\mathbf{v}; z), &
\theta F(\mathbf{v}; z), &
\cdots &
\theta^{r-1} F(\mathbf{v}; z)
\end{pmatrix}^T
\]
where \( r \) is the order of the differential equation satisfied by \( F \), \( \mathbf{p} \in \mathbb{Z}^d \) encodes the discrete parameters of \( F \), and \( z \in \mathbb{R} \) its continuous variable.
\end{definition}

Assume further that the contiguous relations of \(F\) express each discrete parameter shift as a rational linear combination of \(F, \theta F, \dots,\theta^{r-1}F\).
Then there exists a finite set of matrices
\[
\left\{
\mathcal{M}_{\mathbf{e}_i}^{(z)}(\mathbf{v})
\right\}_{i=1}^d
\subset
\mathrm{GL}_r(\mathbb{R}(\mathbf{v}))
\]
such that for all regular \(z \in \mathbb{R}\), \( \mathbf{p} \in \mathbb{Z}^d \) and \( i \in \{1,\dots,d\} \):
\[
\mathcal{S}(\mathbf{p} + \mathbf{e}_i; z)^T = \mathcal{S}(\mathbf{p}; z)^T \cdot \mathcal{M}_{\mathbf{e}_i}^{(z)}(\mathbf{p}).
\]
For each regular \(z\in\mathbb{R}\), the set
\(\{ \mathcal{M}_{\mathbf{e}_i}^{(z)}(\mathbf{v}) \}_{i=1}^d\)
forms a CMF basis.

These relations can be derived algorithmically using creative telescoping and differential elimination techniques. For further reading, including the process for constructing a CMF for the generalized hypergeometric function $_pF_q$, and an explicit example for $_2F_1$, see~\cite[§3]{weinbaum2025conservative}.

\section{Approximation Construction Framework}
\label{sec:framework-definitions}

In this section, we present the details of our framework: The parameterization choices for the CMF, the trajectories that we use and the initial values of the two solutions with ratio approximating $\gamma$.

\begin{definition}[The $\gamma$ CMF and Meijer G-function Parameterization]
\label{def:the-cmf}
Let $r \geq 3$. We define $\mathcal{M}_r$ as the D-Finite CMF of the Meijer G-function with the parameters $(m, n, p, q) = (r, r-1, r-1, r)$, evaluated at $z=1$.

The CMF is of rank $r$, its lattice is of dimension $d=2r-1$, and its state vector at $\mathbf{v}\in\mathbb{Z}^d$ is 
\[
\mathcal{S}_r(\mathbf{v}) =
\begin{pmatrix}
G^{r, r-1}_{r-1, r}(\mathbf{v}; 1), & \theta G^{r, r-1}_{r-1, r}(\mathbf{v}; 1), & \dots & \theta^{r-1} G^{r, r-1}_{r-1, r}(\mathbf{v}; 1)
\end{pmatrix}^T
\]
\end{definition}

\begin{remark}
\label{rem:identity-contour}
The contour $L$ for ${G}^{r,r-1}_{r-1,r} \left( \mathbf{v}; z \right)$ is defined as the standard contour (for the case when $q > p \geq 0$) that encircles the poles of the $b$ parameters (i.e., the poles in the right half of the plane) exactly once in the negative direction.
\end{remark}

\begin{definition}[Initial Values]
\label{def:initial_values}
Let $r \geq 3$. We define the initial value vectors $\mathbf{p}_r, \mathbf{q}_r \in \mathbb{Z}^r$, whose entries for $0 \le k \le r-1$ are given explicitly by:

\[(\mathbf{p}_r)_k = 
\begin{cases} 
\binom{r-3}{k} & 0 \le k \le r-3 \\
(-1)^{r-1} & k = r-2 \\
0 & k = r-1
\end{cases},
\qquad
(\mathbf{q}_r)_k = 
\begin{cases} 
\binom{r-2}{k} & 0 \le k \le r-2 \\
(-1)^{r-1} & k = r-1
\end{cases}\]

\end{definition}

\begin{definition}[Conjectured Trajectories]
\label{def:conjectured-trajectories}
Let $r\ge3$, and let
\( \mathbf a\in\mathbb Z_{\le0}^{\,r-1} \),
\( \mathbf b\in\mathbb N^{\,r}\).
Define the trajectory associated with $(\mathbf a,\mathbf b)$ as
\[
\mathbf t(\mathbf a,\mathbf b)
:=
(a_1,\ldots,a_{r-1};\,b_1,\ldots,b_r)^T
\in
\mathbb Z^{2r-1}.
\]

Let $k(\mathbf b)$ denote the multiplicity of $\min(\mathbf b)$ in $\mathbf b$.
The set of conjectured convergent trajectories is
\[
\mathcal T_r
=
\{\mathbf t(\mathbf a,\mathbf b): k(\mathbf b)\ge2\}.
\]
\end{definition}

\begin{definition}[Converging Trajectories]
\label{def:converging-trajectories}
We prove convergence for a subfamily of order 3.

Let $\mathbf{a} \in {\mathbb{Z}_{\le 0}^{2}}$. Denoting $\mathbf{1}=(1, 1, 1)^T$, we define the subset of converging trajectories as:
\[
\mathcal{T}^*_3=\big\{\mathbf{t}(\mathbf{a}, \mathbf{1})\big\}
\]

Because $\mathbf{1}$ contains $r=3$ vectors of equal size, $k(\mathbf{1})>2$ therefore $\mathcal{T}^*_3\subset\mathcal{T}_3$.
\end{definition}

The main result of this work is the existence of infinite approximations to $\gamma$ via linear difference systems, all convertible to an Apéry-like approximation using the cyclic vector method (\cref{def:cyclic-vector-method}):

\begin{theorem}[Infinite $\gamma$ Apéry-like Approximations]
\label{thm:main}
Let $r = 3$. For the source point $\mathbf{0}\in\mathbb{Z}^{2r-1}$, any trajectory $\mathbf{t} \in \mathcal{T}^*_{r=3}$ and any generic vector $\mathbf{v}\in\mathbb{R}^r$ (With respect to the dual system), the dual CMF limit is evaluated as:
\[
\mathcal{D}_{\mathbf{0}, \mathbf{t}}^{\mathcal{M}}(\mathbf{p}_r,\mathbf{q}_r,\mathbf{v}) = \lim_{n\to\infty} \frac{\mathbf{p}_r^T \cdot \Phi_{\mathcal{M}_{\mathbf{0}, \mathbf{t}}}(n)^{-T}\cdot \mathbf{v}}{\mathbf{q}_r^T\cdot\Phi_{\mathcal{M}_{\mathbf{0}, \mathbf{t}}}(n)^{-T}\cdot \mathbf{v}} = \gamma,
\]

Furthermore, provided that the resulting gauge is regular and invertible
for every $n\in\mathbb{N}_0$, converting the system
$\mathcal{M}_{\mathbf{0},\mathbf{t}}(n)^{-T}$ into an Apéry-like recurrence
using the cyclic vector method maintains the evaluation under the gauged
initial values.

\end{theorem}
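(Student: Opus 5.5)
The proof composes the two mechanisms announced in the introduction: an algebraic identity at the origin and a dynamical isolation statement for the dual system.

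\textbf{Algebraic step.} First I establish that
\[
\frac{\mathbf{p}_3^T\mathcal{S}_3(\mathbf{0})}{\mathbf{q}_3^T\mathcal{S}_3(\mathbf{0})}=\gamma,
\qquad \mathbf{p}_3=(1,1,0)^T,\quad \mathbf{q}_3=(1,1,1)^T .
\]
At $\mathbf{v}=\mathbf{0}$ and $z=1$, the Mellin--Barnes integrand of $G^{3,2}_{2,3}$ is $\Gamma(-s)^3\Gamma(1+s)^2 z^s$. By the reflection formula this equals $-\pi^2\Gamma(-s)/\sin^2(\pi s)\cdot z^s$ up to sign.

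Applying $\theta^k$ multiplies the integrand by $s^k$. Hence $\mathbf{p}^T\mathcal{S}$ and $\mathbf{q}^T\mathcal{S}$ become integrals against the polynomials $1+s$ and $1+s+s^2$, respectively.

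The contour of Remark~\ref{rem:identity-contour} encircles the triple poles at $s=0,1,2,\dots$. I evaluate by residues, which requires expanding $\Gamma(-s)^3\Gamma(1+s)^2$ to second order around each nonnegative integer. This produces $\psi$, $\psi_1$ and their values at integers, i.e.\ harmonic numbers and $\gamma$.

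The polynomial weights $1+s$ and $1+s+s^2$ should make the sums telescope or collapse. I expect $\mathbf{q}^T\mathcal{S}$ to reduce to a rational multiple of an elementary constant, and $\mathbf{p}^T\mathcal{S}$ to that same multiple times $\gamma$, the $\gamma$ entering through $\psi(1)=-\gamma$.

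An equivalent route is to write $G^{3,2}_{2,3}(\,\cdot\,;1)$ as a Mellin-type integral, $\int_0^\infty$ of an $e^{-x}$-kernel. Then $\gamma=-\int_0^\infty e^{-x}\ln x\,dx$ is matched by differentiating $x^{s}$ in the parameter. I would carry out whichever route gives absolutely convergent sums, justifying the residue interchange with the Stirling bounds of Lemma~\ref{lem:gamma-asymptotics}.

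\textbf{Dynamical step.} Next I show that for $\mathbf{t}=\mathbf{t}(\mathbf{a},\mathbf{1})$ the trajectory system has a unique recessive asymptotic, and that the corresponding recessive solution is exactly $\mathbf{x}_3(n)=\mathcal{S}_3(n\mathbf{t})$ up to normalization.

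By the CMF construction, $\mathcal{S}_3(n\mathbf{t})^T$ is a genuine solution of the primal trajectory system. I estimate its size from the integral representation. Shifting $b_j\mapsto b_j+n$ and $a_i\mapsto a_i+n a_i'$ with $a_i'\le 0$, the integrand is controlled by the residue at the leading pole $s=b$. Stirling (Lemma~\ref{lem:gamma-asymptotics}) then gives growth of order $n!^{\kappa}$ with explicit $\kappa$.

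I compare this against the Birkhoff--Trjitzinsky exponents of Definition~\ref{def:asymptotics}, computed from the Newton polygon of the trajectory matrix. The aim is to show that $\mathcal{S}_3(n\mathbf{t})$ is strictly smaller than all other formal solutions. The equal multiplicity in $\mathbf{b}=\mathbf{1}$ (the condition $k(\mathbf{b})\ge2$) is what should produce a strict gap rather than a tie, because coincident $b$'s force higher-order poles and suppress the residue sum.

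With uniqueness in hand, Lemma~\ref{lem:convergence-criteria} gives $c_n\Phi^{-T}_{\mathcal{M}_{\mathbf{0},\mathbf{t}}}(n)\mathbf{v}\to\mathcal{S}_3(\mathbf{0})$ for generic $\mathbf{v}$. The ratio with $\mathbf{p}_3,\mathbf{q}_3$ then tends to $\gamma$ by the algebraic step, since $c_n$ cancels and $\mathbf{q}_3^T\mathcal{S}_3(\mathbf{0})\ne0$.

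\textbf{Cyclic vector step.} This part is routine. From $\Phi_C(n)=U(0)^{-1}\Phi_M(n)U(n)$ applied to the dual matrix $M=\mathcal{M}_{\mathbf{0},\mathbf{t}}^{-T}$, I take gauged initial values $U(0)^{T}\mathbf{p}_3$, $U(0)^{T}\mathbf{q}_3$ and seed $U(n)^{-1}\mathbf{v}$-type adjustments. These reproduce the same numerator and denominator sequences, so the limit is preserved.

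\textbf{Main obstacle.} I expect the hardest step to be the strict recessiveness, uniformly over all $\mathbf{a}\in\mathbb{Z}_{\le0}^2$. This needs the full set of formal exponents of $\mathcal{M}_{\mathbf{0},\mathbf{t}}(n)$, which depend on $\mathbf{a}$, together with a rigorous matching of $\mathcal{S}_3(n\mathbf{t})$ to the smallest one.

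My first attempt would be to compute the characteristic behavior of the leading coefficients of the trajectory matrix symbolically in $\mathbf{a}$. I expect it to show dominant exponents $\lambda$ of equal factorial order, with the $G$-solution losing a power of $n!$ or an exponential factor. I would then confirm that $\mathcal{S}_3(n\mathbf{t})$ exhibits exactly this decay via a saddle-point estimate of the Mellin--Barnes integral.
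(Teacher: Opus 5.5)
Your decomposition (an identity at the origin, isolation of $\mathcal{S}_3(\mathbf 0)$ by the dual system, then gauge invariance) is exactly the paper's. However, both substantive components are left as expectations, and the mechanisms you propose would not deliver them.

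In the algebraic step, expanding the triple poles at $s=n\ge 0$ produces series in $\psi(n+1)^2$, $\psi_1(n+1)$, $\psi(n+1)$, $\pi^2$ and the polynomial weights, all multiplied by $(-1)^n/n!$. Nothing in your plan explains why these collapse to $\gamma$ and $1$. The missing idea is the shift relation $\Phi(s-1)=-s\,\Phi(s)$ for $\Phi(s)=\Gamma(-s)^3\Gamma(1+s)^2$. It turns both weights into exact differences: $(1+s)\Phi(s)=\Phi(s)-\Phi(s-1)$ and $(1+s+s^2)\Phi(s)=(s+1)\Phi(s)-s\,\Phi(s-1)$. The sum of right residues then telescopes to the residue at $s=-1$ of $\Phi$, respectively of $(s+1)\Phi$, minus a tail residue at $s=N$ that tends to $0$. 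At $s=-1$ the factor $\Gamma(-s)^3$ is regular and the double pole comes from $\Gamma(1+s)^2$. With $t=s+1$ one gets $\Phi=t^{-2}(1+\gamma t+\mathcal{O}(t^2))$, hence exactly $\gamma$ and $1$.

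Your dynamical step, as planned, does not work. The value $\mathcal{S}_3(n\mathbf t)$ is not controlled by the leading residue at $s=n$. The residue at $s=n+k$ has modulus of order $\Gamma(\alpha_1n+k+1)\Gamma(\alpha_2n+k+1)/(k!)^3\approx H(n)(c^3n^2)^k/(k!)^3$, up to logarithmic factors, where $H(n)=\Gamma(\alpha_1n+1)\Gamma(\alpha_2n+1)$ and $c=(\alpha_1\alpha_2)^{1/3}$. These terms of alternating sign peak near $k\approx cn^{2/3}$ at size $H(n)e^{3cn^{2/3}+\mathcal{O}(n^{1/3})}$, while their sum is $H(n)e^{-3cn^{2/3}+\mathcal{O}(n^{1/3})}$. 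So Stirling applied to any single residue gives the wrong size.

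Nor do the formal solutions separate by a power of $n!$ or a geometric factor. All three share the factor $H(n)$, so the leading-order data you propose to compute first show no gap. The separation lives entirely in the ramified factors $\exp(-3c\rho_jn^{2/3})$, with $\rho_j$ the cube roots of unity, giving the gap $\exp(-\tfrac92 cn^{2/3})$. The role of $\mathbf b=\mathbf 1$ is to make the integrand contain $\Gamma(x)^3$, so that the saddle equations $3\log s=\log(\alpha_1\alpha_2)-\tau$, $\tau\in\{0,\pm2\pi i\}$, of the three branches have roots at the three points $c\rho_j$. It does not act by suppressing the residue sum.

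Proving this is the heart of the theorem, and you mention it only as a final confirmation. It requires a steepest-descent analysis of $\Gamma(x)^3\Gamma(\alpha_1n-x+1)\Gamma(\alpha_2n-x+1)$, in the variable $x=n-s$, at an $n$-dependent saddle $x\approx c\rho_jn^{2/3}$. The loop contour must be deformed through that saddle, and a parameter-uniform Laplace estimate applied. The paper also avoids any Birkhoff--Trjitzinsky computation. It exhibits the two dominant solutions explicitly as the same integrand times $e^{\pm2\pi i x}$, obtained from the branches $G^{1,2}_{2,3}(z)$ and $(-1)^nG^{2,2}_{2,3}(-z)$ via the reflection formula, and proves linear independence.

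Finally, your gauge step does not literally reproduce the same sequences. With a fixed seed $\mathbf w$ one gets $\mathbf p_3^T\Phi_{\mathcal{M}_{\mathbf 0,\mathbf t}}(n)^{-T}U(n)\mathbf w$, which involves an $n$-dependent vector. The limit survives only because the recessive gap is superpolynomial, which is why the paper needs strong recessiveness.
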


Our evaluation is achieved through the composition of two theorems. The first is the algebraic component, the identity that shows a connection between the state vector of $\mathcal{M}_r$ at the origin $\mathcal{S}_r(\mathbf{0})$ and $\gamma$:
\begin{theorem}[$\gamma$ Linear Form]
\label{thm:linear-form}
Let $r \geq 3$, and let $\mathcal{S}_r(\mathbf{v}; z)$ be the state vector of $\mathcal{M}_r$ CMF (\cref{def:the-cmf}). Then:
\[
\frac{\mathbf{p}_r^T\mathcal{S}_r(\mathbf{0})}{\mathbf{q}_r^T\mathcal{S}_r(\mathbf{0})}=\gamma.
\]
\end{theorem}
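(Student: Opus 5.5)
The plan is to work directly with the Mellin–Barnes integral of \cref{def:meijer-g} and to exploit the fact that the Euler operator acts diagonally on the integrand. At $z=1$ with all parameters zero, $G^{r,r-1}_{r-1,r}(\mathbf{0};z)=\frac{1}{2\pi i}\int_L \Gamma(-s)^r\Gamma(1+s)^{r-1}z^s\,ds$, and $\theta z^s=s z^s$. Hence for any polynomial $P(s)=\sum_k c_k s^k$,
\[
\mathbf{c}^T\mathcal{S}_r(\mathbf{0})=\frac{1}{2\pi i}\int_L P(s)\,\Gamma(-s)^r\Gamma(1+s)^{r-1}\,ds .
\]
By \cref{def:initial_values}, $\mathbf{p}_r$ and $\mathbf{q}_r$ correspond to
\[
P_r(s)=(1+s)^{r-3}+(-1)^{r-1}s^{r-2},\qquad Q_r(s)=(1+s)^{r-2}+(-1)^{r-1}s^{r-1}.
\]
So the theorem reduces to showing $I[P_r]=\gamma\, I[Q_r]$, where $I[P]$ denotes the integral above. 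Differentiation under the integral is justified by the exponential decay in \cref{lem:gamma-asymptotics}, and the contour is the one of \cref{rem:identity-contour}.

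Next I would use the reflection formula $\Gamma(-s)\Gamma(1+s)=-\pi/\sin(\pi s)$ and the shifts $(1+s)\Gamma(1+s)=\Gamma(2+s)$ and $s\Gamma(-s)=-\Gamma(1-s)$. These rewrite each monomial piece of $P_r$ and $Q_r$ as a Meijer G-integrand with shifted parameters. For instance,
\[
(1+s)^{r-3}\Gamma(-s)^r\Gamma(1+s)^{r-1}=\Gamma(-s)^r\Gamma(1+s)^2\Gamma(2+s)^{r-3}.
\]
The aim is to see $I[P_r]$ and $I[Q_r]$ as differences of two G-integrals whose integrands share all but a few Gamma factors, so that the combination collapses. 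Concretely, I would write
\[
P_r(s)\Gamma(-s)^r\Gamma(1+s)^{r-1}=\Gamma(-s)\Bigl(\tfrac{-\pi}{\sin \pi s}\Bigr)^{r-1}P_r(s),
\]
and evaluate by closing the contour to the right and summing residues at $s=n\ge 0$, where the poles have order $r$.

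Each residue is computed from the local expansions
\[
\Gamma(-s)=\frac{(-1)^{n+1}}{n!\,(s-n)}\bigl(1-\psi(n+1)(s-n)+\dots\bigr)
\]
and the even expansion of $\bigl(\pi/\sin\pi s\bigr)^{r-1}$ about $s=n$, which contributes only $\pi^2$-type (zeta-value) corrections. This yields $I[P]$ as a finite combination of the series
\[
\sum_{n\ge0}\frac{(-1)^n}{n!}\,n^j,\qquad \sum_{n\ge0}\frac{(-1)^n}{n!}\,n^j\,\psi^{(k)}(n+1),
\]
with coefficients given by derivatives of $P$. Writing $\psi(n+1)=H_n-\gamma$, the $\gamma$-part of $I[P]$ is $-\gamma$ times a combination of Bell-type sums $\sum(-1)^n n^j/n!\in e^{-1}\mathbb{Z}$. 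The remaining parts involve sums containing $H_n$ and higher polygamma values, together with the $\zeta(2),\zeta(4),\dots$ contributions.

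The main obstacle, and the real content, is verifying that for $Q_r$ everything except a rational multiple of $e^{-1}$ cancels, while for $P_r$ everything except the \emph{same} multiple times $\gamma$ cancels. The $\gamma$ may enter through the $-\gamma$ in $\psi(n+1)$ or through an explicit $\Gamma'(1)$ term. I would first do $r=3$ by hand, where $P_3=1+s$ and $Q_3=(1+s)+s^2$, to see the mechanism. Then I would attempt an induction on $r$ using the polynomial relations
\[
Q_r-(1+s)P_r=(-1)^r s^{r-2},\qquad P_{r+1}=Q_r-2(-1)^{r-1}s^{r-1},
\]
together with the shift identities. These relate $I[s^{j}\cdot]$ for consecutive $r$, because multiplying by $s$ (respectively $1+s$) turns one factor $\Gamma(-s)$ into $-\Gamma(1-s)$ (respectively $\Gamma(1+s)$ into $\Gamma(2+s)$), reducing pole orders on one side.

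If the residue bookkeeping becomes unwieldy, my fallback is to shift the vertical contour (case (i) is admissible since $p+q=2r-1<2(2r-1)$) and compare left and right residue sums. The factor $(1+s)^{r-3}$ is designed to lower the pole order at $s=-1$, which suggests that the identity is really a statement that the left-side residue at $s=-1$ produces exactly $\Gamma'(1)=-\gamma$ times the corresponding $Q_r$ quantity.
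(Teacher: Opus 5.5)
Your translation of $\mathbf{p}_r,\mathbf{q}_r$ via $\theta\mapsto s$ into $P_r(s)=(1+s)^{r-3}+(-1)^{r-1}s^{r-2}$ and $Q_r(s)=(1+s)^{r-2}+(-1)^{r-1}s^{r-1}$ is correct, and it is also the paper's first step. The gap is that the proposal stops where the proof has to begin. You call the cancellation ``the real content'' but give no mechanism for it.

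\textbf{The missing idea.} It is a telescoping (WZ-type) certificate. Your two shift identities combine into the functional equation $\Phi(s-1)=(-1)^r s\,\Phi(s)$, where $\Phi(s)=\Gamma(-s)^r\Gamma(1+s)^{r-1}$. Set $A(s)=(1+s)^{r-3}$ and $B(s)=(1+s)^{r-2}$. Then $P_r(s)=A(s)+(-1)^{r-1}sA(s-1)$, and likewise for $Q_r$ with $B$, so
\[
P_r(s)\Phi(s)=A(s)\Phi(s)-A(s-1)\Phi(s-1),\qquad Q_r(s)\Phi(s)=B(s)\Phi(s)-B(s-1)\Phi(s-1).
\]
Hence $\operatorname{Res}_{s=n}P_r\Phi=R_n-R_{n-1}$ with $R_k=\operatorname{Res}_{s=k}A\Phi$.

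\textbf{How it finishes.} $I[P_r]$ is minus the sum of the residues at $s=0,1,2,\dots$, so it telescopes to $R_{-1}-\lim_{N\to\infty}R_N$. The tail vanishes because $R_N$ carries the factor $1/N!$ from $\Gamma(-s)$. At $t=s+1\to0$ one has $\Phi=t^{1-r}\bigl(1+\gamma t+O(t^2)\bigr)$. So $A\Phi=t^{-2}+\gamma t^{-1}+O(1)$ and $B\Phi=t^{-1}+O(1)$, which gives $I[P_r]=\gamma$ and $I[Q_r]=1$.

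Your fallback intuition that everything reduces to $s=-1$ is right, but what is evaluated there is the certificate $A\Phi$ (resp.\ $B\Phi$), not $P_r\Phi$. Comparing left and right residue sums cannot isolate it, since the left series diverges (the residues at $s=-k$ grow like $(k-1)!$). With the identity in hand, however, your vertical contour does work. For $-1<c<0$, the integral of $P_r\Phi$ over $\operatorname{Re}s=c$ equals the integral of $A\Phi$ over $\operatorname{Re}s=c$ minus that over $\operatorname{Re}s=c-1$, which is $2\pi i\operatorname{Res}_{s=-1}A\Phi$.

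\textbf{Why the routes you sketch would not get there.} First, the target is mis-specified: $I[Q_r]=1$, not a rational multiple of $e^{-1}$, so the Bell-type $e^{-1}$ contributions are not what survives. Second, a term-by-term ``$\gamma$-part'' is not well defined. For instance, $\sum_{n\ge0}(-1)^nH_n/n!=-e^{-1}\sum_{k\ge1}\frac{1}{k\cdot k!}=-e^{-1}\bigl(\operatorname{Ei}(1)-\gamma\bigr)$ hides a $\gamma$ inside the harmonic sums. Splitting the residues into such series destroys exactly the telescoping structure that makes them collapse. Third, the induction on $r$ fails. The integrands for $r$ and $r+1$ differ by the non-polynomial factor $\Gamma(-s)\Gamma(1+s)=-\pi/\sin(\pi s)$. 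So your (correct) polynomial relations among $P_r,Q_r,P_{r+1}$ do not relate $I_r$ to $I_{r+1}$, and multiplying by $s$ or $1+s$ only shifts a parameter within the same $G^{r,r-1}_{r-1,r}$ family. The relation you actually need is the fixed-$r$ identity $P_r(s)=A(s)+(-1)^{r-1}sA(s-1)$ combined with the functional equation for $\Phi$.
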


The second is the dynamical component, which proves that infinitely many trajectories are able to isolate the state vector at the origin:
\begin{theorem}[Dual Convergence]
\label{thm:dual-convergence}
Let $r=3$. For any trajectory $\mathbf{t} \in \mathcal{T}^*_r$ and generic vector $\mathbf{v} \in \mathbb{R}^r$, the dual system of the trajectory matrix isolates the initial state vector $\mathcal{S}_r(\mathbf{0})$ up to scaling.
More precisely, there exists a sequence $c_n\in\mathbb{R}\setminus\{0\}$ such that
\[
\lim_{n\to\infty}
c_n\Phi_{\mathcal{M}_{\mathbf{0}, \mathbf{t}}}(n)^{-T}\mathbf{v}
=
\mathcal{S}_r(\mathbf{0}).
\]

Moreover, any rational gauge $U(n) \in \text{GL}_r(\mathbb{Q}(n))$ that is regular and invertible for every non negative $n$, defining the system $C(n) = U(n)^{-1} \mathcal{M}_{\mathbf{0}, \mathbf{t}}(n)^{-T} U(n+1)$ maintains the isolation.
Namely, for any vector $\mathbf{v}\in\mathbb{R}^r$ generic to $C(n)$, there exists a sequence $\widetilde{c}_n\in\mathbb{R}\setminus\{0\}$ such that
\[
\lim_{n\to\infty}
\widetilde{c}_n\Phi_C(n)\mathbf{v}
=
U(0)^{-1}\mathcal{S}_r(\mathbf{0}).
\]
\end{theorem}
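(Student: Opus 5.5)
The plan is to reduce the statement to \cref{lem:convergence-criteria}, applied to the primal trajectory system $\mathbf{x}(n+1)^T=\mathbf{x}(n)^T\mathcal{M}_{\mathbf{0},\mathbf{t}}(n)$. By the defining property of the D-finite CMF and \cref{def:trajectory-matrix}, the sequence $\mathbf{x}(n):=\mathcal{S}_3(n\mathbf{t})$ is an honest solution of this system. Its initial value is $\mathbf{x}(0)=\mathcal{S}_3(\mathbf{0})$. The whole task is therefore to show two things: the primal system has a unique recessive asymptotic, and $\mathcal{S}_3(n\mathbf{t})$ is asymptotic to that recessive formal solution. Once these hold, \cref{lem:convergence-criteria} yields $c_n$ with $c_n\Phi^{-T}\mathbf{v}\to\mathcal{S}_3(\mathbf{0})$ for every generic $\mathbf{v}$.

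\textbf{Step 1: asymptotics of the particular solution.}
With $\mathbf{t}=(a_1,a_2;1,1,1)$ and $a_i\le 0$, write $G(n):=G^{3,2}_{2,3}$ with top parameters $a_i n$ and bottom parameters $n,n,n$. Its Mellin–Barnes integrand is
\[
\Gamma(n-s)^3\,\Gamma(1-a_1n+s)\,\Gamma(1-a_2n+s).
\]
I will estimate $G(n)$ by Laplace's method, or equivalently by residues at the triple pole $s=n$ (the leading residue involves $\psi$ and $\psi_1$), using \cref{lem:gamma-asymptotics}. The same applies to $\theta^kG$, since multiplying the integrand by $s^k$ gives a factor of order $n^k$.

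I expect $\|\mathcal{S}_3(n\mathbf{t})\|$ to grow roughly like
\[
\Gamma\bigl((1-a_1)n\bigr)\,\Gamma\bigl((1-a_2)n\bigr)\,e^{O(n)}.
\]
The point of the triple equal $b$-shift is that the three $\Gamma(n-s)$ factors coalesce, which suppresses $G$ relative to the other formal solutions.

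\textbf{Step 2: the full set of formal solutions.}
I will compute the characteristic data $(\kappa_i,\lambda_i,\alpha_i)$ of $\mathcal{M}_{\mathbf{0},\mathbf{t}}(n)$ from \cref{def:asymptotics}. Two routes are available:
\begin{itemize}
\item compute them directly from the leading terms of the matrix entries, e.g.\ via a Newton polygon of the cyclic-vector recurrence;
\item identify the other two solutions as the contiguous $G^{m',n'}$ functions with other contours, namely $G^{2,2}$-type or $G^{3,1}$-type choices, which satisfy the same contiguous relations, and estimate them by the same saddle-point method.
\end{itemize}
The second route seems more structural. Comparing the resulting $\|\mathbf{u}_i(n)\|$ should show that the solution of Step 1 is strictly smaller than the other two, either by a factor $\lambda^n$ with $|\lambda|<1$ or by a power $n^{-\delta}$. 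This gives a unique recessive asymptotic in the sense of \cref{def:unique-recessive}, with $\mathcal{S}_3(n\mathbf{t})\sim\mathbf{u}_3(n)$.

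\textbf{Main obstacle.}
The hard step is uniform control of the saddle-point estimates for all $a_1,a_2\le 0$, together with a proof that no two growth rates coincide. The equal-$b$ case $\mathbf{b}=\mathbf{1}$ may produce solutions with the same $\lambda$ and $\kappa$, separated only by powers of $n$ or $\log n$. To handle this, I will first try to compute the Birkhoff invariants symbolically as functions of $(a_1,a_2)$ from the trajectory matrix. I will then check that the recessive one is strictly separated, and treat the boundary cases $a_i=0$ separately.

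\textbf{Gauge invariance.}
Since $\Phi_C(n)=U(0)^{-1}\Phi_{\mathcal{M}^{-T}}(n)U(n)$ by \cref{def:cyclic-vector-method}, for any $\mathbf{v}$ generic to $C$ we have
\[
\Phi_C(n)\mathbf{v}=U(0)^{-1}\,\Phi_{\mathcal{M}}(n)^{-T}\,\bigl(U(n)\mathbf{v}\bigr).
\]
The vector $U(n)\mathbf{v}$ varies with $n$. Because $U$ is rational, it only rescales the dual solution basis by polynomial-type factors, so the dual dominant component remains dominant, and genericity for $C$ is exactly nonvanishing of that component. Hence, with $\widetilde{c}_n$ absorbing the scalar factors, $\widetilde{c}_n\Phi_C(n)\mathbf{v}\to U(0)^{-1}\mathcal{S}_3(\mathbf{0})$.
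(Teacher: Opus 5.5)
\paragraph{Overall.} Your architecture is the paper's: show that $\mathcal{S}_3(n\mathbf{t})$ is the (strongly) recessive solution of the primal trajectory system, get the other solutions from other Meijer $G$ branches, invoke \cref{lem:convergence-criteria}, and absorb the gauge by polynomial bounds. The gap is that the step carrying all the content is only planned, and the outcome you anticipate for it is wrong.

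\paragraph{The asymptotics.} All three solutions share the same factorial part $H(n)=\Gamma(\alpha_1n+1)\Gamma(\alpha_2n+1)$, with $\alpha_i=1-a_i$, and the same geometric rate. They are not separated by $\lambda^n$, nor by powers of $n$ or $\log n$; they are separated by a ramified exponential.

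After $x=n-s$, the integrands $\Gamma(x)^3\Gamma(\alpha_1n-x+1)\Gamma(\alpha_2n-x+1)e^{\tau x}$, with $\tau\in\{2\pi i,-2\pi i,0\}$, have saddles at $x\approx c\rho_jn^{2/3}$. Here $\rho_j$ are the cube roots of unity and $c=(\alpha_1\alpha_2)^{1/3}$. This gives $\|\mathbf{I}_j(n)\|=H(n)\exp(-3c\operatorname{Re}(\rho_j)n^{2/3}+\mathcal{O}(n^{1/3}))$, so the recessive solution is smaller by a factor $\exp(-\tfrac92 cn^{2/3})$.

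Your Step 1 bound $H(n)e^{\mathcal{O}(n)}$ cannot see this. Your residue route is also not equivalent to Laplace's method:
\begin{itemize}
\item the residue at $s=n$ is of order $H(n)\log^2 n$;
\item the residues at $s=n+k$ grow to size $H(n)e^{(3c+o(1))n^{2/3}}$ near $k\approx cn^{2/3}$;
\item the alternating series cancels down to $H(n)e^{-3cn^{2/3}}$.
\end{itemize}
What is missing is precisely \cref{lem:asymptotic-extraction}. You must locate these $n^{2/3}$-scale saddles and deform the Mellin--Barnes loop through them, keeping the pole families separated and the tails integrable. You then need a parameter-uniform Laplace method, because the rescaled phase still depends on $n$. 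No separate treatment of $a_i=0$ is needed.

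\paragraph{The companion solutions.} Your alternative branches do not automatically satisfy the same trajectory recurrence. Changing $m$ or $n$ flips the sign $(-1)^{p-m-n}$ in the differential equation and the signs of the contiguous relations. The paper repairs this by using $G^{1,2}_{2,3}(z)$ and $(-1)^nG^{2,2}_{2,3}(-z)$. You also need the three solutions to be linearly independent. This is delicate because the two dominant modes have equal modulus; the paper handles it by showing that their amplitude ratio keeps rotating.

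\paragraph{The gauge step.} Your gauge argument needs the separation to be superpolynomial. A rational gauge can shift each solution's polynomial order by a bounded but solution-dependent amount, so the argument fails in the $n^{-\delta}$ scenario you allow in Step 2.

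There is a second problem. Writing $\Phi_C(n)\mathbf{v}=U(0)^{-1}\Phi_{\mathcal{M}_{\mathbf{0},\mathbf{t}}}(n)^{-T}U(n)\mathbf{v}$ feeds an $n$-dependent, polynomially growing vector into \cref{lem:convergence-criteria}, which is stated for a fixed $\mathbf{v}$.

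The paper avoids both issues. It notes that $C(n)^{-T}=U(n)^T\mathcal{M}_{\mathbf{0},\mathbf{t}}(n)U(n+1)^{-T}$ is itself a rational gauge of the primal system. Strong recessiveness survives this gauge by \cref{lem:gauge-invariance-recessive}. \Cref{lem:convergence-criteria} is then applied directly to $C^{-T}$, whose dual is $C$.
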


Using these two theorems, the proof of the main theorem is straight-forward:

\begin{proof}[Proof of \cref{thm:main}:]

By \cref{thm:dual-convergence}, there exists a sequence $c_n\in\mathbb{R}\setminus\{0\}$ such that
\[
c_n\Phi_{\mathcal{M}_{\mathbf{0}, \mathbf{t}}}(n)^{-T}\mathbf{v}
\longrightarrow
\mathcal{S}_r(\mathbf{0}).
\]
Therefore,
\[
\lim_{n\to\infty}
\frac{\mathbf{p}_r^T \Phi_{\mathcal{M}_{\mathbf{0}, \mathbf{t}}}(n)^{-T} \mathbf{v}}
{\mathbf{q}_r^T \Phi_{\mathcal{M}_{\mathbf{0}, \mathbf{t}}}(n)^{-T} \mathbf{v}}
=
\lim_{n\to\infty}
\frac{\mathbf{p}_r^T c_n\Phi_{\mathcal{M}_{\mathbf{0}, \mathbf{t}}}(n)^{-T} \mathbf{v}}
{\mathbf{q}_r^T c_n\Phi_{\mathcal{M}_{\mathbf{0}, \mathbf{t}}}(n)^{-T} \mathbf{v}}
\overset{(\ref{thm:dual-convergence})}{=} 
\frac{\mathbf{p}_r^T\mathcal{S}_r(\mathbf{0})}
{\mathbf{q}_r^T\mathcal{S}_r(\mathbf{0})}
\overset{(\ref{thm:linear-form})}{=}
\gamma.
\]

Because \cref{thm:dual-convergence} establishes that rational gauge transformations preserve this asymptotic isolation, applying the cyclic vector method yields an Ap\'ery-like recurrence whose dual system evaluates identically to $\gamma$ when paired with the gauge-transformed initial value vectors $\mathbf{p}_r^T U(0)$ and $\mathbf{q}_r^T U(0)$.

\end{proof}

\begin{conjecture}[Conjectured $\gamma$ Apéry-like Approximations]
\label{conj:whole-family}
For any $r \geq 3$ and for any $\mathbf{t}\in \mathcal{T}_r$ (\cref{def:conjectured-trajectories}), \cref{thm:dual-convergence} and consequently \cref{thm:main} hold.
\end{conjecture}

\vspace{10pt}

\section{The Meijer G-function and Euler's Constant Identity}
\label{sec:g-function-identity}

In this section, we prove \cref{thm:linear-form}, the algebraic component of our result, using a novel identity between the Meijer G-function and Euler's constant:

\begin{proposition}[G-function and $\gamma$ identity]
\label{prop:g-function-identity}
Let $\theta = z\frac{d}{dz}$ and $p\in\mathbb{N}, p\geq2$.
Define
$$
G_p(z) :=
{G}_{p,p+1}^{p+1,p} \left( \begin{matrix} 0, \dots, 0 \\ 0, 0, \dots, 0 \end{matrix} \,\middle|\, z \right)
$$
where ${G}^{m, n}_{p, q}$ is the Meijer G-function (\cref{def:meijer-g}).

Then the numerator and denominator operators
\[
\mathcal{N}_p:=(1 + \theta)^{p-2}+(-1)^p\theta^{p-1}, \qquad
\mathcal{D}_p:=(1 + \theta)^{p-1}+(-1)^p\theta^{p},
\]
satisfy that $$
\mathcal{N}_p[G_p](1) = \gamma, \qquad
\mathcal{D}_p[G_p](1) = 1.
$$
\end{proposition}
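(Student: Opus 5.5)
The plan is to pass to the Mellin–Barnes integral, exploit the fact that $\theta$ acts diagonally on $z^s$, and show that each operator turns the integrand into a ``discrete derivative'' whose contour integral reduces to a single residue.

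\textbf{Step 1 (Mellin form).} By \cref{def:meijer-g} with $(m,n,p,q)=(p+1,p,p,p+1)$ and all parameters zero,
\[
G_p(z)=\frac{1}{2\pi i}\int_L \Gamma(-s)^{p+1}\Gamma(1+s)^p z^s\,ds ,
\]
with $L$ the loop of \cref{rem:identity-contour}, encircling $s=0,1,2,\dots$ negatively. Since $\theta z^s=s z^s$, for any polynomial $f$ we get $f(\theta)[G_p](1)=\frac{1}{2\pi i}\int_L f(s)\Gamma(-s)^{p+1}\Gamma(1+s)^p\,ds$. Differentiation under the integral sign will be justified by the decay of the integrand along the two horizontal rays of $L$. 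Using $\Gamma(-s)\Gamma(1+s)=-\pi/\sin \pi s$, the integrand is $\Gamma(-s)\,(\pi/\sin\pi s)^p$ up to a sign, times a polynomial. Stirling (\cref{lem:gamma-asymptotics}) then shows the factor $\Gamma(-s)$ decays super-exponentially as $\operatorname{Re}s\to+\infty$ on lines of bounded imaginary part. This decay also lets us shift the contour by a real unit.

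\textbf{Step 2 (telescoping for $\mathcal N_p$).} Absorb the polynomial factors into Gamma functions.
\begin{itemize}
\item For the first term, $(1+s)^{p-2}\Gamma(1+s)^{p-2}=\Gamma(2+s)^{p-2}$, so it gives
\[
I(s):=\Gamma(-s)^{p+1}\Gamma(1+s)^2\Gamma(2+s)^{p-2}.
\]
\item For the second term, $\Gamma(s)=\Gamma(1+s)/s$ and $\Gamma(1-s)=-s\Gamma(-s)$ give
\[
I(s-1)=(-1)^{p+1}s^{p-1}\Gamma(-s)^{p+1}\Gamma(1+s)^p ,
\]
which is exactly minus the second integrand $(-1)^p s^{p-1}\Gamma(-s)^{p+1}\Gamma(1+s)^p$.
\end{itemize}
Hence
\[
\mathcal N_p[G_p](1)=\frac1{2\pi i}\Bigl(\int_L I(s)\,ds-\int_{L-1} I(s)\,ds\Bigr).
\]
Here $L-1$ is $L$ shifted left by one, and so encloses the poles of $I$ at $-1,0,1,\dots$. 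The two contours agree far to the right, and the region between them contains only the pole $s=-1$ of $I$. With the negative orientation, the difference equals $\operatorname{Res}_{s=-1}I$.

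\textbf{Step 3 (the residue for $\mathcal N_p$).} Put $u=1+s$. Then $I=\Gamma(1+u)^p\Gamma(1-u)^{p+1}/u^2$, a double pole. Its residue is $\frac{d}{du}\bigl[\Gamma(1+u)^p\Gamma(1-u)^{p+1}\bigr]_{u=0}=-p\gamma+(p+1)\gamma=\gamma$, using $\Gamma'(1)=-\gamma$.

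\textbf{Step 4 (the same argument for $\mathcal D_p$).} Take $J(s)=\Gamma(-s)^{p+1}\Gamma(1+s)\Gamma(2+s)^{p-1}$, which comes from the term $(1+\theta)^{p-1}$. The same computation gives $J(s-1)=(-1)^{p+1}s^p\Gamma(-s)^{p+1}\Gamma(1+s)^p$, the negative of the $(-1)^p\theta^p$ term. At $s=-1$ the function $J$ has the simple pole $\Gamma(1+u)^p\Gamma(1-u)^{p+1}/u$, with residue $1$. This yields $\mathcal D_p[G_p](1)=1$.

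\textbf{Main obstacle.} The delicate step is the rigorous contour manipulation in Step 2. It requires three things:
\begin{itemize}
\item uniform decay of $I$ and $J$ on the horizontal rays and on a vertical connecting segment far to the right, so that $\int_L-\int_{L-1}$ collapses to a small circle around $s=-1$;
\item bookkeeping of the orientation sign;
\item a check that $L-1$ is still an admissible contour that meets no poles.
\end{itemize}
Along horizontal rays I would bound $|\Gamma(-s)|$ by Stirling on $\operatorname{Re}s\to+\infty$. Along the vertical pieces I would use the $|\Gamma(\sigma+it)|$ estimate of \cref{lem:gamma-asymptotics}, noting the $e^{-\pi|t|/2}$ factors from the $2p+1$ Gamma factors against the polynomial growth. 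The hypothesis $p\ge2$ is needed so that the exponent $p-2$ in $(1+\theta)^{p-2}$ is nonnegative; for $p=2$ the factor $\Gamma(2+s)^{p-2}$ is simply $1$ and the argument is unchanged.
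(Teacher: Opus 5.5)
Your proof is correct, and its algebraic core is the paper's. Your $I(s)=(1+s)^{p-2}\Phi_p(s)$ and $J(s)=(1+s)^{p-1}\Phi_p(s)$ are exactly the certificate products $Q_{\mathrm{num}}\Phi_p$ and $Q_{\mathrm{den}}\Phi_p$ of \cref{lem:wz-pair}. Your residue $h'(0)$, with $h(u)=\Gamma(1+u)^p\Gamma(1-u)^{p+1}$, is the same computation as the paper's expansion $\Phi_p(s)=t^{-p}(1+\gamma t+O(t^2))$.

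You differ from the paper in how the telescoping is reduced to a single residue. The paper closes $L$ to the right and expands the integral as the residue series over $s=0,1,2,\dots$. It then telescopes the partial sums up to $N$ and needs \cref{lem:vanishing-boundary} to dispose of $\operatorname{Res}_{s=N}Q\Phi_p$. You instead substitute $s\mapsto s+1$ in the $I(s-1)$ term and compare $\int_L I$ with $\int_{L-1} I$; your orientation bookkeeping is right.

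Your route is more economical, and easier than your ``main obstacle'' paragraph suggests. The loop $L$ begins and ends on lines parallel to the real axis, and a real translation maps each such line onto itself. So $L$ and $L-1$ literally coincide far to the right. The overlapping rays cancel identically, and $\int_L I-\int_{L-1}I$ is just a positively oriented small loop around $s=-1$, the only pole of $I$ (or $J$) between them.

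The only analytic input is therefore absolute convergence of the integrals along the horizontal rays. This follows from $\Gamma(-s)=-\pi/(\sin(\pi s)\Gamma(1+s))$, the bound $|\sin\pi s|\ge\sinh(\pi|\operatorname{Im}s|)$, and Stirling for $\Gamma(1+s)$. You need this anyway to differentiate under the integral sign. No residue-series representation and no boundary limit are required. The vertical connecting segment and the $|\Gamma(\sigma+it)|$ bounds you list are unnecessary; the latter pertain to vertical contours of type (i), not to the loop. Likewise, $L-1$ need not be an admissible Meijer contour; it only has to avoid the poles of $I$ and $J$.

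In exchange, the paper's version presents the identity as a telescoping sum of residues, i.e.\ of the coefficients in the residue expansion of the $G$-function. This is closer in form to the discrete WZ-pair formalism it cites.
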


This identity directly proves \cref{thm:linear-form}:
\begin{proof}[Proof of \cref{thm:linear-form}:]
By the construction of the Conservative Matrix Field, the $k$-th entry of the state vector $\mathcal{S}(\mathbf{0};1)$ evaluates the differential operator $\theta^k$ applied to the basis function $G_{r-1}(z)$ at $z=1$.
Consequently, applying any polynomial differential operator $P(\theta) = \sum_{k=0}^{r-1} c_k \theta^k$ is algebraically equivalent to the inner product of its coefficient vector $\mathbf{v} = [c_0, \dots, c_{r-1}]^T$ with the state vector:
\[
\mathbf{v}^T \mathcal{S}(\mathbf{0}) = P(\theta)[G_{r-1}(z)]\Big|_{z=1}.
\]
By expanding the binomial definitions of the operators $\mathcal{N}_{r-1}(\theta)$ and $\mathcal{D}_{r-1}(\theta)$, it is immediate that $\mathbf{p}_r$ and $\mathbf{q}_r$ are precisely their respective coefficient vectors, yielding $\mathbf{p}_r^T \mathcal{S}(\mathbf{0}) = \gamma$ and $\mathbf{q}_r^T \mathcal{S}(\mathbf{0}) = 1$.
\end{proof}

Using Euler's reflection formula we define:
$$\Phi_p(s) := \Gamma(-s)^{p+1}\Gamma(1+s)^p = (-\pi)^p \Gamma(-s) \csc^p(\pi s),$$
we have $$G_p(z)=\frac{1}{2\pi i} \int_L \Phi_p(s) z^s \, ds.$$

\subsection{Integral Representation and WZ Certificates}
\label{subsec:integral-wz}

Define
\[
F(p, s) = \Phi_p(s).
\]

To evaluate the sum, we form a WZ pair by introducing rational certificates $Q(p, s)$\footnote{The construction of $Q(p,s)$ as rational certificates represents a specific, manually verifiable instance of Zeilberger's method of creative telescoping \cite{zeilberger1991method}. In a broader context, algorithms due to Chyzak \cite{chyzak2000extension} generalize this approach to arbitrary holonomic functions. From this perspective, our operators $\mathcal{D}_p$ and $\mathcal{N}_p$ are naturally suited for such algorithmic treatment, and the fact that we can carry out the telescoping ‘by hand’ is merely a reflection of the simplicity of the integrand.}: 
\[
G(p, s) := Q(p, s)\,F(p, s).
\]

\begin{lemma}\label{lem:theta-insertion}
For any $i \in \mathbb{N}_0$:
\[
\theta^i G_p(z) \;=\; \frac{1}{2\pi i} \int_L s^i \,\Phi_p(s)\, z^s \, ds.
\]
\end{lemma}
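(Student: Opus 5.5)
The plan is to differentiate under the integral sign and then argue by induction on $i$. For $z$ in a fixed region, $\theta z^s = z\frac{d}{dz}z^s = s\,z^s$. Hence, whenever it is legitimate to interchange $\theta$ with the contour integral, we get
\[
\theta\left[\frac{1}{2\pi i}\int_L g(s)\,z^s\,ds\right] = \frac{1}{2\pi i}\int_L s\,g(s)\,z^s\,ds .
\]
Applying this with $g(s)=s^{i}\Phi_p(s)$ gives the step $i\to i+1$. The base case $i=0$ is just the integral representation $G_p(z)=\frac{1}{2\pi i}\int_L \Phi_p(s)z^s\,ds$ stated above.

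The only real content is justifying the interchange, uniformly in $i$. I would fix the contour $L$ of \cref{rem:identity-contour}: a loop coming from $+\infty$ along a horizontal line $\operatorname{Im}s=-c$ and returning along $\operatorname{Im}s=c$, encircling the poles at $s\in\mathbb{N}_0$ in the negative sense. I would then show that for every $i$ the integral $\int_L |s|^{i+1}|\Phi_p(s)|\,|z^s|\,|ds|$ converges locally uniformly in $z$ on a neighbourhood of $z=1$. By the standard theorem on differentiating parameter integrals (dominated convergence on compact subsets), this lets $\frac{d}{dz}$ pass inside.

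For the estimate I will use the reflection form $\Phi_p(s)=(-\pi)^p\Gamma(-s)\csc^p(\pi s)$. On the horizontal rays $s=x\pm ic$ with $x\to+\infty$, $|\csc(\pi s)|$ stays bounded, because we are a fixed distance from the real axis. Meanwhile \cref{lem:gamma-asymptotics} gives $\log|\Gamma(-s)| = -x\log x + x + \mathcal{O}(\log x)$ there: $-s$ lies away from the negative real axis only in the sense that it tends to $-\infty$ at fixed imaginary offset, so I would first apply the reflection formula $\Gamma(-s)=-\pi/(\Gamma(1+s)\sin(\pi s))$ and then Stirling to $\Gamma(1+s)$. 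Thus the integrand decays like $e^{-x\log x}$ times $|z|^x x^{i+1}$. This factorial decay beats any polynomial and any geometric factor $|z|^x$, so the majorant is uniform for $z$ in compacta of $\mathbb{C}\setminus(-\infty,0]$ on the principal branch. The bounded piece of the loop near the imaginary segment contributes a continuous, compactly supported integrand and needs no further estimate.

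I expect the main obstacle to be this decay bound. It requires recasting $\Gamma(-s)$ for large positive $\operatorname{Re}s$ via reflection, since Stirling does not apply directly near the negative real axis. Once it is in hand, the induction is immediate. Finally, I would evaluate at $z=1$ by continuity of both sides.
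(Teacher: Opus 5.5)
Your proposal is correct and follows the same idea as the paper, whose proof simply invokes the standard operational rule of the inverse Mellin transform: $\theta = z\frac{d}{dz}$ acts on the integrand as multiplication by $s$. Your induction, with the interchange justified by the factorial decay of $\Phi_p(s)=(-\pi)^{p+1}\csc^{p+1}(\pi s)/\Gamma(1+s)$ on the horizontal tails of $L$, supplies the details the paper delegates to that reference. The final continuity step is unnecessary, since $z=1$ already lies in the slit plane where you prove the identity.
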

\begin{proof}
This follows immediately from standard operational properties of the inverse Mellin transform, where applying the differential operator $\theta = z\frac{d}{dz}$ corresponds to multiplying the $s$-domain integrand by $s$ \cite{flajolet1995mellin}.
\end{proof}

\begin{lemma}\label{lem:wz-pair}
Let $Q(p, s)$ be either
\[
Q_{\mathrm{num}}(p, s) = (s+1)^{p-2}, \quad
Q_{\mathrm{den}}(p, s) = (s+1)^{p-1}.
\]
Then
\[
G(p, s) - G(p, s-1) = \Phi_p(s)P(p, s),
\]
where $P(p, s)$ corresponds to $Q(p,s)$ as follows:
\[
P_{\mathrm{num}}(p, s) = (s+1)^{p-2} + (-1)^p s^{p-1}, \quad 
P_{\mathrm{den}}(p, s) = (s+1)^{p-1} + (-1)^p s^p.
\]
\end{lemma}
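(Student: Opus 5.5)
The plan is a direct computation: reduce everything to the single ratio $\Phi_p(s-1)/\Phi_p(s)$, which the functional equation $\Gamma(z+1)=z\Gamma(z)$ of \cref{def:gamma-function} gives as an explicit polynomial in $s$. Since $G(p,s)=Q(p,s)\Phi_p(s)$, we can factor $\Phi_p(s)$ out of the difference:
\[
G(p,s)-G(p,s-1)=\Phi_p(s)\left[Q(p,s)-Q(p,s-1)\,\frac{\Phi_p(s-1)}{\Phi_p(s)}\right].
\]
This is an identity of meromorphic functions in $s$. It holds away from the integers, where the poles of $\Phi_p$ lie, and then everywhere by analytic continuation.

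\textbf{Step 1 (the ratio).} Write $\Phi_p(s-1)=\Gamma(1-s)^{p+1}\Gamma(s)^p$. Using $\Gamma(1-s)=(-s)\Gamma(-s)$ and $\Gamma(1+s)=s\Gamma(s)$ we get
\[
\frac{\Phi_p(s-1)}{\Phi_p(s)}=\frac{(-s)^{p+1}\,\Gamma(-s)^{p+1}\,\Gamma(s)^p}{\Gamma(-s)^{p+1}\,s^p\,\Gamma(s)^p}=(-1)^{p+1}s .
\]
As a cross-check, the same result follows from the reflection form $(-\pi)^p\Gamma(-s)\csc^p(\pi s)$. Here $\csc^p(\pi(s-1))=(-1)^p\csc^p(\pi s)$ and $\Gamma(1-s)=-s\,\Gamma(-s)$, which again gives $(-1)^{p+1}s$.

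\textbf{Step 2 (plug in the certificates).} For $Q_{\mathrm{num}}$ we have $Q(p,s-1)=s^{p-2}$. The bracket therefore equals
\[
(s+1)^{p-2}-s^{p-2}\cdot(-1)^{p+1}s=(s+1)^{p-2}+(-1)^p s^{p-1}=P_{\mathrm{num}}(p,s).
\]
For $Q_{\mathrm{den}}$ we have $Q(p,s-1)=s^{p-1}$, and the bracket becomes $(s+1)^{p-1}+(-1)^p s^{p}=P_{\mathrm{den}}(p,s)$.

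There is no real obstacle here; the only point needing care is the sign bookkeeping in Step 1. The factor $(-s)^{p+1}$ comes from the $p+1$ copies of $\Gamma(-s)$, while only $p$ copies of $\Gamma(1+s)$ contribute $s^{-p}$, leaving a net $(-1)^{p+1}s$. This ratio is what later turns the $\theta$-polynomials $\mathcal N_p,\mathcal D_p$ of \cref{prop:g-function-identity} into telescoping differences via \cref{lem:theta-insertion}.
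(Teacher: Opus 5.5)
Your proof is correct and follows essentially the same route as the paper. Both reduce to the shift relation $\Phi_p(s-1)=(-1)^{p+1}s\,\Phi_p(s)$ and then substitute the certificates $Q_{\mathrm{num}},Q_{\mathrm{den}}$; the paper derives the shift from the reflection form $(-\pi)^p\Gamma(-s)\csc^p(\pi s)$, which is your cross-check, while your primary derivation from $\Gamma(-s)^{p+1}\Gamma(1+s)^p$ is equally valid.
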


\begin{proof}
The result is obtained by direct verification using the functional identities $\Gamma(1-s)=-s\Gamma(-s)$ and $\csc(\pi(s-1))=-\csc(\pi s)$. 
Applying these to $\Phi_p(s) = (-\pi)^p \Gamma(-s) \csc^p(\pi s)$ yields:
\[
\Phi_p(s-1) = (-1)^{p+1} s \Phi_p(s).
\]
Therefore,
\begin{align*}
G(p,s)-G(p,s-1) &= Q(p,s)\Phi_p(s) - Q(p,s-1)\Phi_p(s-1) \\
&= \Phi_p(s) \bigl(Q(p,s) - Q(p,s-1)(-1)^{p+1} s\bigr) \\
&= \Phi_p(s) \bigl(Q(p,s) + (-1)^p s \,Q(p,s-1)\bigr).
\end{align*}
Substituting $Q_{\mathrm{num}}$ and $Q_{\mathrm{den}}$ into this expression directly produces $P_{\mathrm{num}}$ and $P_{\mathrm{den}}$.
\end{proof}

\begin{lemma}
\label{lem:vanishing-boundary}
For any integer $p \ge 2$ and polynomial $Q(p,s)$ defined as in \cref{lem:wz-pair},
\[
\lim_{N \to \infty} \operatorname{Res}_{s=N} Q(p,s)\Phi_p(s) = 0.
\]
\end{lemma}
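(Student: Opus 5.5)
We compute the residue explicitly and bound it. Write $\Phi_p(s)=\Gamma(-s)^{p+1}\Gamma(1+s)^p$. At a nonnegative integer $N$, the factor $\Gamma(-s)^{p+1}$ has a pole of order $p+1$, while $\Gamma(1+s)^p$ is analytic and nonzero there. It is cleanest to use the reflection form
\[
\Phi_p(s)=(-\pi)^p\,\Gamma(-s)\csc^p(\pi s).
\]
Substitute $s=N+\varepsilon$. Then $\csc^p(\pi s)=(-1)^{Np}\csc^p(\pi\varepsilon)$, and $\Gamma(-N-\varepsilon)=\dfrac{(-1)^{N}\Gamma(1-\varepsilon)}{\varepsilon\,(1+\varepsilon)_N}$, where $(1+\varepsilon)_N=\prod_{k=1}^N(k+\varepsilon)$. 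This gives
\[
Q(p,N+\varepsilon)\,\Phi_p(N+\varepsilon)=\pm\frac{\pi^p\,\Gamma(1-\varepsilon)}{\varepsilon\,\sin^p(\pi\varepsilon)}\cdot\frac{Q(p,N+\varepsilon)}{(1+\varepsilon)_N}.
\]
The residue is the coefficient of $\varepsilon^{-1}$, i.e.\ the coefficient of $\varepsilon^{p}$ in the product of the Taylor series of
\[
A(\varepsilon)=\frac{(\pi\varepsilon)^p\,\Gamma(1-\varepsilon)}{\sin^p(\pi\varepsilon)}
\quad\text{and}\quad
B_N(\varepsilon)=\frac{Q(p,N+\varepsilon)}{(1+\varepsilon)_N}.
\]

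Next we control $B_N$. The series $A$ is independent of $N$, so it suffices to show that every Taylor coefficient $[\varepsilon^j]B_N$, for $0\le j\le p$, tends to $0$ as $N\to\infty$. For this, use Cauchy's estimate on the circle $|\varepsilon|=1/2$. On that circle,
\[
|(1+\varepsilon)_N|\ \ge\ \prod_{k=1}^N\left(k-\tfrac12\right)=\frac{\Gamma(N+\tfrac12)}{\Gamma(\tfrac12)},
\]
which grows like $N!\,N^{-1/2}$. Meanwhile $|Q(p,N+\varepsilon)|\le (N+2)^{p-1}$ for both choices of $Q$ in \cref{lem:wz-pair}. Hence
\[
\sup_{|\varepsilon|=1/2}|B_N(\varepsilon)|=\mathcal{O}\!\left(\frac{N^{p-1/2}}{N!}\right),
\]
and so $\bigl|[\varepsilon^j]B_N\bigr|\le 2^j\sup_{|\varepsilon|=1/2}|B_N(\varepsilon)|\to0$. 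This growth comparison can also be read off from \cref{lem:gamma-asymptotics}.

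Combining the two, the residue is $\sum_{j=0}^p [\varepsilon^{p-j}]A\cdot[\varepsilon^j]B_N$, a fixed finite linear combination of quantities tending to zero, so the limit is $0$. In fact the residue decays factorially.

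The main obstacle is the bookkeeping for a pole of order $p+1$. A naive bound on the principal part would seem to require derivatives of $1/(1+\varepsilon)_N$, which involve harmonic-type sums. The Cauchy-estimate route avoids computing these explicitly, because only uniformity on a fixed circle is needed. A similar bound on a circle $|s-N|=1/2$ applied directly to the full integrand would be cleaner still, except that $\csc^p$ must be handled there. On that circle, however, $|\sin(\pi s)|$ is bounded below by a positive constant independent of $N$ by periodicity, so this route works too, and we adopt it if the series manipulation becomes cumbersome.
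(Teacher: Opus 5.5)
Your proof is correct, but it takes a different route from the paper.

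\textbf{The paper's route.} The paper argues indirectly. Since $q=p+1>p$, the residue expansion of $G_p(z)$ over the right-hand poles converges absolutely at $z=1$. Inserting $Q(p,s)$ amounts to applying the operator $Q(p,\theta)$, which the paper says preserves this convergence (``term-by-term differentiation preserves the radius of convergence''). The $N$-th term test then forces $\operatorname{Res}_{s=N}Q(p,s)\Phi_p(s)\to0$.

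\textbf{Your route.} You work locally at $s=N$. The reflection form splits off an $N$-independent singular factor $\varepsilon^{-p-1}A(\varepsilon)$. You then bound the Taylor coefficients of $B_N(\varepsilon)=Q(p,N+\varepsilon)/(1+\varepsilon)_N$ by Cauchy's estimate on $|\varepsilon|=1/2$, using $|(1+\varepsilon)_N|\ge(1/2)_N$.

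\textbf{What each buys.} The paper's argument is shorter. However, it rests on an external convergence theorem for Meijer $G$ residue series. Its differentiation step is also phrased for power series, which does not literally apply here: with coincident lower parameters, the residue of $\Phi_p(s)z^s$ at $s=N$ carries powers of $\log z$ up to $(\log z)^p$, so the expansion is not a power series in $z$. Your argument is self-contained and elementary. It also gives the explicit rate $\mathcal{O}(N^{p-1/2}/N!)$, which makes absolute convergence of $\sum_N\operatorname{Res}_{s=N}Q(p,s)\Phi_p(s)$ immediate as well.

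\textbf{A harmless slip.} The correct identity is $\Gamma(-N-\varepsilon)=(-1)^{N+1}\Gamma(1-\varepsilon)/\bigl(\varepsilon\,(1+\varepsilon)_N\bigr)$; check $N=0$, where $\Gamma(-\varepsilon)=-\Gamma(1-\varepsilon)/\varepsilon$. Your $\pm$ already absorbs the sign, so nothing downstream changes.
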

\begin{proof}
By the standard residue representation of the Meijer \(G\)-function, for the right-loop contour \(L\) and \(q=p+1>p\), the contour integral is represented by the sum of the residues at the right-hand poles.
This residue expansion converges for all \(z\neq 0\).
In particular, it converges absolutely at \(z=1\).

By \cref{lem:theta-insertion}, multiplying the integrand \(\Phi_p(s)\) by the WZ certificate polynomial \(Q(p,s)\) corresponds functionally to applying the finite-order differential operator \(Q(p,\theta)\) to \(G_p(z)\). Since term-by-term differentiation preserves the radius of convergence of a power series \cite{ahlfors1979complex}, the residue sum for the modified integrand \(Q(p,s)\Phi_p(s)\) remains absolutely convergent at \(z=1\).

By the necessary condition for convergence of an infinite series, its individual terms tend to zero. Therefore,
\[
\lim_{N\to\infty}
\operatorname{Res}_{s=N} Q(p,s)\Phi_p(s)=0.
\]
\end{proof}

\begin{lemma}\label{lem:residues-values}
\[
\operatorname{Res}_{s=-1} Q_{\mathrm{num}}(p, s)\Phi_p(s) = \gamma,
\qquad
\operatorname{Res}_{s=-1} Q_{\mathrm{den}}(p, s)\Phi_p(s) = 1.
\]
\end{lemma}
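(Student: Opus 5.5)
The plan is to compute both residues directly from the Laurent expansion of $\Phi_p(s)=\Gamma(-s)^{p+1}\Gamma(1+s)^p$ around $s=-1$. I will work with the product form rather than the reflected form $(-\pi)^p\Gamma(-s)\csc^p(\pi s)$, because there the singular factor is isolated in $\Gamma(1+s)$. Set $s=-1+\varepsilon$, so that $ds=d\varepsilon$ and the residue at $s=-1$ is the coefficient of $\varepsilon^{-1}$ in the resulting expansion in $\varepsilon$. Then
\[
\Gamma(-s)=\Gamma(1-\varepsilon), \qquad \Gamma(1+s)=\Gamma(\varepsilon)=\frac{\Gamma(1+\varepsilon)}{\varepsilon}.
\]
Here $\Gamma(1-\varepsilon)$ is analytic and nonzero near $\varepsilon=0$, while $\Gamma(\varepsilon)$ has a simple pole. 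Hence
\[
\Phi_p(-1+\varepsilon)=\varepsilon^{-p}\,H_p(\varepsilon),\qquad H_p(\varepsilon):=\Gamma(1-\varepsilon)^{p+1}\Gamma(1+\varepsilon)^{p},
\]
with $H_p$ analytic at $0$ and $H_p(0)=1$.

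For the denominator certificate, $Q_{\mathrm{den}}(p,s)\Phi_p(s)=\varepsilon^{p-1}\cdot\varepsilon^{-p}H_p(\varepsilon)=\varepsilon^{-1}H_p(\varepsilon)$. This has a simple pole, and its residue is $H_p(0)=1$.

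For the numerator certificate, $Q_{\mathrm{num}}(p,s)\Phi_p(s)=\varepsilon^{-2}H_p(\varepsilon)$. This is a double pole, so the residue is $H_p'(0)$. Logarithmic differentiation gives
\[
\frac{H_p'(0)}{H_p(0)}=-(p+1)\psi(1)+p\,\psi(1)=-\psi(1).
\]
Since $\psi(1)=\Gamma'(1)=-\gamma$, this residue equals $\gamma$. The value $\Gamma'(1)=-\gamma$ follows from the integral definition of $\gamma$ in the paper: differentiating $\Gamma(z)=\int_0^\infty t^{z-1}e^{-t}\,dt$ at $z=1$ gives $\Gamma'(1)=\int_0^\infty e^{-t}\ln t\,dt$.

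The only delicate point is bookkeeping, not analysis. I would check two things:
\begin{itemize}
\item The variable change introduces no sign, since $ds=d\varepsilon$. The residue is taken as the plain Laurent coefficient; any orientation sign attached to the contour $L$ is handled elsewhere in the argument, not in this lemma.
\item The condition $p\ge 2$ guarantees that the exponent $p-2$ in $Q_{\mathrm{num}}$ is nonnegative, so the certificate is a polynomial, as \cref{lem:wz-pair} requires.
\end{itemize}
With these checks, both identities follow by reading off the leading Laurent coefficients.
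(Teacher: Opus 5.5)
Your proof is correct and follows essentially the same route as the paper: substitute $s=-1+\varepsilon$, show $\Phi_p(-1+\varepsilon)=\varepsilon^{-p}\bigl(1+\gamma\varepsilon+O(\varepsilon^2)\bigr)$, then read off the $\varepsilon^{-1}$ coefficients after multiplying by $\varepsilon^{p-2}$ and $\varepsilon^{p-1}$. The only difference is cosmetic: you expand the product form $\Gamma(1-\varepsilon)^{p+1}\Gamma(1+\varepsilon)^p$ via a logarithmic derivative, while the paper expands the reflected form $(-\pi)^p\Gamma(-s)\csc^p(\pi s)$ and tracks the sign cancellation $(-\pi)^p(-1)^p\pi^{-p}=1$ explicitly.
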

\begin{proof}
Set \(t := s+1\), so \(s = -1 + t\) and \(t \to 0\) corresponds to \(s \to -1\).
Recall that
\[
\Phi_p(s) \;=\; (-\pi)^p\,\Gamma(-s)\,\csc^p(\pi s).
\]
Expanding the Laurent series of each factor at \(s=-1\) (i.e., at \(t=0\)):
\begin{enumerate}
\item Using \(\Gamma(1+u)=1-\gamma u+O(u^2)\) with \(u=-t\),
\[
\Gamma(1-t) \;=\; 1 + \gamma t + O(t^2).
\]
\item Since \(\sin(\pi s)=\sin(-\pi+\pi t)=-\sin(\pi t)=-\pi t+O(t^3)\), we get
\[
\csc(\pi s) \;=\; \frac{1}{\sin(\pi s)} \;=\; -\frac{1}{\pi t} + O(t).
\]
\end{enumerate}
Substituting \(s=-1+t\) into \(\Phi_p\), we get
\[
\Phi_p(s) 
\;=\; (-\pi)^p\,\Gamma(1-t)\,\bigl(\csc(\pi(-1+t))\bigr)^p
\;=\; (-\pi)^p\,\Gamma(1-t)\,\bigl(-\tfrac{1}{\pi t}+O(t)\bigr)^p.
\]
Using the above expansions and factoring the leading term of the \(p\)-th power,
\[
\Phi_p(s) 
\;=\; (-\pi)^p\,\Gamma(1-t)\,\Bigl[(-1)^p \pi^{-p} t^{-p}\bigl(1+O(t^2)\bigr)\Bigr]
\;=\; \underbrace{(-\pi)^p(-1)^p\pi^{-p}}_{=(-1)^{2p}=1}\,t^{-p}\,\Gamma(1-t)\,\bigl(1+O(t^2)\bigr).
\]
Hence the constants cancel exactly and
\[
\Phi_p(s) \;=\; t^{-p}\,\bigl(1+\gamma t + O(t^2)\bigr).
\]

Multiplying by the certificates, we get:
\begin{enumerate}
\item 
\(
Q_{\mathrm{num}}(p,s)\,\Phi_p(s) 
\;=\; t^{p-2}\cdot t^{-p}\,\bigl(1+\gamma t + O(t^2)\bigr)
\;=\; t^{-2} + \gamma t^{-1} + O(1),
\)
\item \(
Q_{\mathrm{den}}(p,s)\,\Phi_p(s) 
\;=\; t^{p-1}\cdot t^{-p}\,\bigl(1+\gamma t + O(t^2)\bigr)
\;=\; t^{-1} + \gamma + O(t).
\)
\end{enumerate}

Thus \[\operatorname{Res}_{s=-1} Q_{\mathrm{den}}(p,s)\Phi_p(s)=1, \qquad
\operatorname{Res}_{s=-1} Q_{\mathrm{num}}(p,s)\Phi_p(s)=\gamma.\]
\end{proof}

\subsection{Proof of~\texorpdfstring{\Cref{prop:g-function-identity}}{Proposition~\ref{thm:g-function-identity}}}
\begin{proof}

By the inverse Mellin transform property (\cref{lem:theta-insertion}), evaluating the operators $\mathcal{N}_p$ and $\mathcal{D}_p$ at $z=1$ maps the polynomials in $\theta$ directly to our polynomials $P_{\mathrm{num}}$ and $P_{\mathrm{den}}$ in $s$. Therefore:
\begin{align*}
\mathcal{N}_p[G_p](1) &= \frac{1}{2\pi i}\int_{L} P_{\mathrm{num}}(p, s)\Phi_p(s)\,ds, \\
\mathcal{D}_p[G_p](1) &= \frac{1}{2\pi i}\int_{L} P_{\mathrm{den}}(p, s)\Phi_p(s)\,ds.
\end{align*}

We evaluate these integrals by closing the contour to the right and summing the residues at $s=n \ge 0$. The clockwise closure orientation of the contour contributes an overall minus sign. By the WZ pair identity (\cref{lem:wz-pair}), the sum telescopes:
\begin{align*}
-\sum_{n=0}^{N}\operatorname{Res}_{s=n}\!\bigl(\Phi_p(s)P(p,s)\bigr) &= -\sum_{n=0}^{N} \operatorname{Res}_{s=n}\!\bigl(G(p,s)-G(p,s-1)\bigr) \\
&= \operatorname{Res}_{s=-1}G(p,s) - \operatorname{Res}_{s=N}G(p,s).
\end{align*}

Taking the limit as $N \to \infty$, the boundary term at $N$ vanishes by \cref{lem:vanishing-boundary}. We are left exactly with the residue at $s=-1$. 

Substituting the specific certificates, \cref{lem:residues-values} gives:
\begin{align*}
\mathcal{D}_p[G_p](1) &= \operatorname{Res}_{s=-1} Q_{\mathrm{den}}(p,s)\Phi_p(s) = 1, \\
\mathcal{N}_p[G_p](1) &= \operatorname{Res}_{s=-1} Q_{\mathrm{num}}(p,s)\Phi_p(s) = \gamma.
\end{align*}
\end{proof}

\begin{remark}[$\gamma$ as a single rank-three value form]
Selecting $p=2$, \cref{prop:g-function-identity} yields \((1+\theta)G_2(1)=\gamma\).

Using the Mellin--Barnes definition of the Meijer G-function \cite[§16.17, Eq.~(16.17.1)]{NIST:DLMF}, the operator
\(1+\theta\) multiplies the integrand by \(1+s\), and therefore replaces one
factor \(\Gamma(1+s)\) by \(\Gamma(2+s)\). 

Equivalently, this shifts one upper
parameter from \(0\) to \(-1\). Therefore:
\[
\gamma
=
G^{3,2}_{2,3}\!\left(
\begin{matrix}
-1,0\\
0,0,0
\end{matrix}
\,\middle|\,1
\right).
\]
\end{remark}

\subsection{Examples}
We illustrate the operators $\mathcal{N}_p$ and $\mathcal{D}_p$ for two small values of $p=r-1$, and derive $\mathbf{p}_r, \mathbf{q}_r$ from them.
The coefficients of these operators form Pascal's triangle: they consist of row $p-1$ and the $p-2$ respectively (due to the fact that $(1 + \theta)^n = \sum_{i=0}^{n}\binom{n}{i}\theta^i$), followed by $(-1)^p$. 

\begin{example}
\label{ex:identity-2}
\[
\mathcal{N}_2 = \sum_{i=0}^{0} \binom{0}{i} \theta^i + (-1)^2 \theta^1 = 1 + \theta,
\]
\[
\mathcal{D}_2 = \sum_{i=0}^{1} \binom{1}{i} \theta^i + (-1)^2 \theta^2 = 1 + \theta + \theta^2.
\]
Using the theorem, we get:
\[
\mathcal{N}_2[G_2](1) = {G_2(1) + \theta G_2(1)} = \gamma,
\]
\[
\mathcal{D}_2[G_2](1) = {G_2(1) + \theta G_2(1) + \theta^2 G_2(1)} = 1,
\]

Yielding the initial values $\mathbf{p}_3 = (1, 1, 0)^T$ and $\mathbf{q}_3 = (1, 1, 1)^T$.
\end{example}

\begin{example}
\label{ex:identity-5}
\[
\mathcal{N}_5 = \sum_{i=0}^{3} \binom{3}{i} \theta^i + (-1)^5 \theta^4 = 1 + 3\theta + 3\theta^2 + \theta^3 - \theta^4,
\]
\[
\mathcal{D}_5 = \sum_{i=0}^{4} \binom{4}{i} \theta^i + (-1)^5 \theta^5 = 1 + 4\theta + 6\theta^2 + 4\theta^3 + \theta^4 - \theta^5,
\]
Using the theorem, we get:
\[
\mathcal{N}_5[G_5](1) = {G_5(1) + 3\theta G_5(1)+ 3\theta^2 G_5(1)+ \theta^3 G_5(1) - \theta^4 G_5(1)} = \gamma,
\]
\[
\mathcal{D}_5[G_5](1) = {G_5(1) + 4\theta G_5(1) + 6\theta^2 G_5(1) + 4\theta^3 G_5(1) +\theta^4 G_5(1) -\theta^5 G_5(1)} = 1,
\]

Yielding the initial values $\mathbf{p}_6 = (1, 3, 3, 1, -1, 0)^T$ and $\mathbf{q}_6 = (1, 4, 6, 4, 1, -1)^T$.
\end{example}

\section{The Meijer G-function CMF}
\label{sec:meijer-g-cmf}
In this section, we will explain the symbolic process required to construct a D-finite CMF (\cref{subsec:d-finite-cmf}) for the Meijer G-function (\cref{def:meijer-g}).
The construction is defined by the Meijer G-function parameters $m, n, p, q \in \mathbb{N}_0$, and it is well defined for the same parameter ranges as the Meijer G-function: $0 \leq m \leq q$ and $0 \leq n \leq p.$

Denoting $r = \max(p, q)$, and $d = p + q$, we order the parameters as $\mathbf{p} = (a_1, \dots, a_p, b_1, \dots, b_q)$.

We define the state vector as:
\[
\mathcal{S}(\mathbf{p};z) :=
\begin{pmatrix}
G^{m, n}_{p, q}(\mathbf{p}; z), \theta G^{m, n}_{p, q}(\mathbf{p} ;z), \dots, \theta^{r-1}G^{m, n}_{p, q}(\mathbf{p}; z)
\end{pmatrix}^T
\]

In order to construct the CMF, we must construct identities between $\mathcal{S}(\mathbf{p}+\mathbf{e}_i)$ and $\mathcal{S}(\mathbf{p})$ that rely purely on the $\theta$ operator.

As explained in~\cref{def:meijer-g}, the Meijer G-function satisfies the following differential equation:
\[
\left[\,(-1)^{p-m-n}\, z \prod_{j=1}^{p}\!\left( z\frac{d}{dz} - a_j + 1 \right)
\;-\; \prod_{j=1}^{q}\!\left( z\frac{d}{dz} - b_j \right)\right] G^{m, n}_{p, q}\left(\begin{matrix} a_1,\dots,a_p \\ b_1,\dots, b_q\end{matrix};z\right) = 0.
\]

Similarly to the technique presented in~\cite[§3]{weinbaum2025conservative}, when looking at the basis spanned by the state vector, we denote $t_i$ as the coefficient of $\theta^i$ of the differential equation and construct
\[
M_\theta(\mathbf{p}; z) =
\begin{bmatrix}
    0 & 0 & \hdots & 0 & -\frac{t_0(\mathbf{p}; z)}{t_r(\mathbf{p}; z)} \\
    1 & 0 & \hdots & 0 & -\frac{t_1(\mathbf{p}; z)}{t_r(\mathbf{p}; z)}  \\
    0 & 1 & \hdots & 0 & -\frac{t_2(\mathbf{p}; z)}{t_r(\mathbf{p}; z)}  \\
    \vdots & \vdots & \vdots & \vdots &\vdots \\
    0 & 0 & \hdots & 1 & -\frac{t_{r-1}(\mathbf{p}; z)}{t_r(\mathbf{p}; z)} 
\end{bmatrix}
\]

Then, we use the following established relationships~\cite[\S2.4]{Mathai1993}:

\begin{align*}
\frac{d}{dz} \Biggl[ z^{1-a_i} \,
  G_{p,q}^{\,m,n} \!\left( \left. \begin{matrix} \mathbf{a_p} \\ \mathbf{b_q} \end{matrix} \; \right| z \right) \Biggr]
  &= z^{-a_i} \,
  G_{p,q}^{\,m,n} \!\left( \left. \begin{matrix} a_1, \dots, a_i-1, \dots, a_p \\ \mathbf{b_q} \end{matrix} \; \right| z \right),
  && 1 \leq i \leq n, \\[1em]
\frac{d}{dz} \Biggl[ z^{1-a_i} \,
  G_{p,q}^{\,m,n} \!\left( \left. \begin{matrix} \mathbf{a_p} \\ \mathbf{b_q} \end{matrix} \; \right| z \right) \Biggr]
  &= -z^{-a_i} \,
  G_{p,q}^{\,m,n} \!\left( \left. \begin{matrix} a_1, \dots, a_i-1, \dots, a_p \\ \mathbf{b_q} \end{matrix} \; \right| z \right),
  && n < i \leq p, \\[1em]
\frac{d}{dz} \Biggl[ z^{-b_j} \,
  G_{p,q}^{\,m,n} \!\left( \left. \begin{matrix} \mathbf{a_p} \\ \mathbf{b_q} \end{matrix} \; \right| z \right) \Biggr]
  &= -\, z^{-1-b_j} \,
  G_{p,q}^{\,m,n} \!\left( \left. \begin{matrix} \mathbf{a_p} \\ b_1, \dots, b_j + 1, \dots, b_q \end{matrix} \; \right| z \right),
  && 1 \leq j \leq m, \\[1em]
\frac{d}{dz} \Biggl[ z^{-b_j} \,
  G_{p,q}^{\,m,n} \!\left( \left. \begin{matrix} \mathbf{a_p} \\ \mathbf{b_q} \end{matrix} \; \right| z \right) \Biggr]
  &= z^{-1-b_j} \,
  G_{p,q}^{\,m,n} \!\left( \left. \begin{matrix} \mathbf{a_p} \\ b_1, \dots, b_j + 1, \dots, b_q \end{matrix} \; \right| z \right),
  && m < j \leq q, \\[1em]
\end{align*}

in order to deduce the basis matrices of the CMF:
\[
\mathcal{M}_{-\mathbf{e}_i}(\mathbf{p}; z) = 
\begin{cases}
M_\theta(\mathbf{p}; z) - (a_i - 1) I  & 1 \leq i \leq n \\[1em]
-(M_\theta(\mathbf{p}; z) - (a_i - 1) I) & n < i \leq p \\[1em]
\end{cases}
\]
and 
\[
\mathcal{M}_{e_j}(\mathbf{p}; z) =
\begin{cases}
-(M_\theta(\mathbf{p}; z) - b_j I) & p < j \leq p+m \\[1em]
M_\theta(\mathbf{p}; z) - b_j I & p+m < j \leq p+q \\[1em]
\end{cases}
\]

Finally, to finalize the CMF construction, one can invert the directions of the negative basis matrices as described in \cref{rem:CMF-negative-matrices} by calculating $\mathcal{M}_{\mathbf{e}_i}(\mathbf{p}; z) = \mathcal{M}_{-\mathbf{e}_i}(\mathbf{p}+\mathbf{e}_i; z)^{-1}$.

\begin{example}
\label{ex:cmf-construction}
Let us construct the CMF defined in \cref{def:the-cmf}, with $r=3$.

By the definition, we have $m = q = 3$ and $p = n = 2$.

First, we construct the theta matrix. For this parameterization the differential equation is
{\small
\[
\big(-\theta^{3} + \left(b_{1} + b_{2} + b_{3} - z\right) \theta^{2} + \left(a_{1} z + a_{2} z - b_{1} b_{2} - b_{1} b_{3} - b_{2} b_{3} - 2 z\right) \theta +(-  a_{1} a_{2} z + a_{1} z + a_{2} z + b_{1} b_{2} b_{3} - z)\big)F =0
\]
}
We extract our coefficients
\begin{itemize}
    \item $t_3(\mathbf{p}; z) = -1$
    \item $t_2(\mathbf{p}; z) = \left(b_{1} + b_{2} + b_{3} - z\right)$
    \item $t_1(\mathbf{p}; z) = \left(a_{1} z + a_{2} z - b_{1} b_{2} - b_{1} b_{3} - b_{2} b_{3} - 2 z\right)$
    \item $t_0(\mathbf{p}; z) = - a_{1} a_{2} z + a_{1} z + a_{2} z + b_{1} b_{2} b_{3} - z$
\end{itemize}

to construct 
\[
M_\theta(\mathbf{p}; z) =
\left[\begin{matrix}
0 & 0 & - a_{1} a_{2} z + a_{1} z + a_{2} z + b_{1} b_{2} b_{3} - z
\\1 & 0 & a_{1} z + a_{2} z - b_{1} b_{2} - b_{1} b_{3} - b_{2} b_{3} - 2 z
\\0 & 1 & b_{1} + b_{2} + b_{3} - z
\end{matrix}\right].
\]

We then construct the basis matrices. We only have two cases as $n = p = 2$ and $m = q = 3$:
\begin{align*}
&\mathcal{M}_{-\mathbf{e}_i} = M_\theta(\mathbf{p}; z) - (a_i - 1) I =
&&\left[\begin{matrix}
1 - a_{i} & 0 & - a_{1} a_{2} z + a_{1} z + a_{2} z + b_{1} b_{2} b_{3} - z\\
1 & 1 - a_{i} & a_{1} z + a_{2} z - b_{1} b_{2} - b_{1} b_{3} - b_{2} b_{3} - 2 z\\
0 & 1 & - a_{i} + b_{1} + b_{2} + b_{3} - z + 1\end{matrix}\right]
&& i \in \{1, 2\} \\ 
&\mathcal{M}_{\mathbf{e}_j} = -(M_\theta(\mathbf{p}; z) - b_{j-2} I) =
&&\left[\begin{matrix}
b_{j-2} & 0 & a_{1} a_{2} z - a_{1} z - a_{2} z - b_{1} b_{2} b_{3} + z\\
-1 & b_{j-2} & - a_{1} z - a_{2} z + b_{1} b_{2} + b_{1} b_{3} + b_{2} b_{3} + 2 z\\
0 & -1 & - b_{1} - b_{2} - b_{3} + b_{j-2} + z
\end{matrix}\right] && j \in \{3, 4, 5\}
\end{align*}

Finally, we can conclude by calculating $\mathcal{M}_{\mathbf{e}_i}(\mathbf{p}; z) = \mathcal{M}_{-\mathbf{e}_i}(\mathbf{p}+\mathbf{e}_i; z)^{-1}$.

\end{example}

\section{Dual Limit Convergence}
\label{sec:dual-limit-convergence}

In this section, we prove \cref{thm:dual-convergence}, the dynamical component of our result. The result is an application of Miller's algorithm \cite{gautschi1967computational} as described in \cref{lem:convergence-criteria}, enabled by the asymptotic behavior of the Meijer G-function within our specific parameterization.

\begin{definition}[Strongly Recessive Asymptotic]
\label{def:strongly-recessive}
Let $r \in \mathbb{N}, M \in \text{GL}_r(\mathbb{Q}(n))$ and consider the linear difference system:
$$\mathbf{x}(n+1)^T = \mathbf{x}(n)^T M(n)$$

If the system admits a unique recessive asymptotic (\cref{def:unique-recessive}) and the separation grows superpolynomially, then the system admits a \emph{strongly recessive asymptotic}. That is, for any integer $d>0$:
\[
\lim_{n\to\infty} n^d \frac{\|\mathbf{x}_r(n)\|}{\|\mathbf{x}_{r-1}(n)\|} = 0.
\]

\end{definition}

\subsection{Converging Trajectories}
\label{subsec:converging-trajectories}

Let \(L\) denote the right-loop contour from \cref{def:meijer-g}, and let
\(L'\) be its image under the change of variables \(x=n-s\), with the
orientation reversed.
Let \(a_1, a_2 \in \mathbb{Z}_{\le0}\) and write \(\alpha_i =1-a_i\) for \(i = 1, 2 \).

Define
\[
K(n,x)
=
\Gamma(x)^3
\Gamma(\alpha_1n-x+1)
\Gamma(\alpha_2n-x+1).
\]
and
\[
K_1(n,x)=K(n,x)e^{2\pi i x},\qquad
K_2(n,x)=K(n,x)e^{-2\pi i x},\qquad
K_3(n,x)=K(n,x).
\]

\[
\mathbf{I}_j(n)^{T}
=
\frac{1}{2\pi i}
\int_{L'}
\left(
1, n-x, (n-x)^2
\right)
K_j(n,x)\,dx,
\qquad j\in\{1,2,3\}.
\]

We first show that \(\mathbf{I}_1(n)^{T}, \mathbf{I}_2(n)^{T}, \mathbf{I}_3(n)^{T} \) are solutions of the same linear difference system and identify \(\mathbf I_3(n)^T\) as the Meijer \(G\)-function state vector \(\mathcal S_3(n\mathbf t)^T\) from \cref{def:the-cmf}.
We then evaluate the asymptotic growth of the three solutions and show that this solution is strongly recessive.

\begin{lemma}
\label{lem:pure-modes-satisfy}
For every \(\mathbf{t}=(a_1,a_2;1,1,1)\in\mathcal{T}_3^*\),
\[
\mathbf I_j(n+1)^{T}
=
\mathbf I_j(n)^{T}
\mathcal{M}_{\mathbf{0},\mathbf{t}}(n),
\qquad j\in\{1,2,3\}.
\]
Moreover,
\[
\mathbf I_3(n)^{T}
=
\mathcal S_3(n\mathbf t)^{T}.
\]
\end{lemma}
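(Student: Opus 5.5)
The plan is to work entirely at the level of the Mellin–Barnes integrand. I treat the identification $\mathbf I_3(n)^T=\mathcal S_3(n\mathbf t)^T$ first, and then show that each $\mathbf I_j$ satisfies the linear relations that define the CMF basis matrices.

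\emph{Identification of $\mathbf I_3$.} Put $\mathbf t=(a_1,a_2;1,1,1)$. At the lattice point $n\mathbf t$ we have $b_1=b_2=b_3=n$ and upper parameters $na_1,na_2$. By \cref{def:meijer-g} with $z=1$, the integrand of $G^{3,2}_{2,3}(n\mathbf t;1)$ is
\[
\Gamma(n-s)^3\,\Gamma(1-na_1+s)\,\Gamma(1-na_2+s).
\]
By \cref{lem:theta-insertion}, applying $\theta^k$ inserts a factor $s^k$. Substituting $x=n-s$ gives $1-na_i+s=\alpha_i n-x+1$ and $s^k=(n-x)^k$, so the integrand becomes $(n-x)^kK(n,x)$. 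The Jacobian sign $ds=-dx$ cancels against the orientation reversal built into $L'$. This yields $\mathbf I_3(n)^T=\mathcal S_3(n\mathbf t)^T$ directly.

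\emph{Recurrence for all three $\mathbf I_j$.} I would go back to how the basis matrices of \cref{sec:meijer-g-cmf} arise at the integrand level, rather than to the $G$-function itself. Write $\Psi(\mathbf p;s)$ for the Mellin–Barnes integrand. There are three ingredients.
\begin{itemize}
\item Shifting $b_j\mapsto b_j+1$ multiplies $\Psi$ by $(b_j-s)$.
\item Shifting $a_i\mapsto a_i-1$ multiplies $\Psi$ by $(1-a_i+s)$.
\item The differential equation corresponds to the functional identity $\prod_j(s+1-b_j)\cdots=\pm\prod_i(s+1-a_i+1)\cdots$ relating $\Psi(s)$ and $\Psi(s+1)$. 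After integration, this lets one reduce $s^3\Psi$ to a combination of $\Psi,s\Psi,s^2\Psi$; this is exactly the matrix $M_\theta$.
\end{itemize}
Hence the identity $\mathcal S(\mathbf p\pm\mathbf e_i)^T=\mathcal S(\mathbf p)^T\mathcal M_{\pm\mathbf e_i}(\mathbf p)$ holds for \emph{any} integrand of the form $\Psi(\mathbf p;s)\,\omega(s)$, where $\omega$ has period $1$. Two conditions are needed: the contour shift $s\mapsto s+1$ crosses no new poles, and the boundary contributions at the ends of the loop vanish.

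The factors $e^{\pm2\pi i x}=e^{\pm2\pi i(n-s)}$ are $1$-periodic in $s$ and independent of $n\in\mathbb Z$. So $K_1,K_2,K_3$ all qualify, provided those two analytic conditions hold. I then compose the basis matrices along the path prescribed by the trajectory-matrix algorithm from $n\mathbf t$ to $(n+1)\mathbf t$. Each elementary step is valid at the corresponding intermediate lattice point, so their product is $\mathcal M_{\mathbf 0,\mathbf t}(n)$, and $\mathbf I_j(n+1)^T=\mathbf I_j(n)^T\mathcal M_{\mathbf 0,\mathbf t}(n)$. The inverse matrices $\mathcal M_{\mathbf e_i}$ for the $a$-directions follow from \cref{rem:CMF-negative-matrices}.

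\emph{Main obstacle: the analytic justification for $K_1$ and $K_2$.} Along the horizontal rays of $L'$ (going to $-\infty$ in $x$), I would use the reflection formula $\Gamma(x)=\pi/(\sin(\pi x)\Gamma(1-x))$. This turns $K(n,x)$ into
\[
\frac{\Gamma(\alpha_1n-x+1)\,\Gamma(\alpha_2n-x+1)}{\Gamma(1-x)^3\sin^3(\pi x)},
\]
which decays factorially because the denominator carries three Gamma factors against two. On a ray $\operatorname{Im}x=c$, the factors $e^{\pm2\pi i x}$ and $\csc^3(\pi x)$ are bounded. Hence the integrals converge absolutely, and the boundary terms from shifting $x\mapsto x\pm1$ tend to zero. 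This argument parallels \cref{lem:vanishing-boundary}.

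I also need to check that the shifted contour still encloses the same pole set $x\in\mathbb Z_{\le 0}$. It must not capture poles of $\Gamma(\alpha_in-x+1)$, which lie at $x\ge\alpha_in+1$. This should follow from choosing $L'$ to separate these two families of poles, using $\alpha_i\ge1$ so the families stay separated. If the uniform decay estimate is delicate, I would use Stirling (\cref{lem:gamma-asymptotics}) to bound $|K_j|$ on the rays explicitly.
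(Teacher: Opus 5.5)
Your proposal is correct, and it takes a genuinely different route from the paper.

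\textbf{The paper's route.} The paper never manipulates $K_1,K_2$ directly. It introduces three auxiliary genuine Meijer $G$-functions along the trajectory: $G^{1,2}_{2,3}(\cdot\,|\,1)$, $(-1)^nG^{2,2}_{2,3}(\cdot\,|\,-1)$ and $G^{3,2}_{2,3}(\cdot\,|\,1)$. It argues from the CMF construction of \cref{sec:meijer-g-cmf} that all three satisfy the same trajectory system. This requires tracking three things: the lower-parameter shift matrices carry an $m$-dependent sign whose product over one step is $(-1)^m$; $z\mapsto -z$ restores the sign of the $z$-term of the differential equation when $m=2$; and the normalization $(-1)^n$ compensates. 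It then uses the reflection formula to show that these branches equal $A\,(\mathbf I_1,\mathbf I_2,\mathbf I_3)^T$ for a constant invertible $A$.

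\textbf{Your route.} You observe that every ingredient of the basis matrices survives multiplication of the integrand by an entire $1$-periodic factor $\omega(s)$ with controlled growth on the contour. These ingredients are the parameter shifts, which are pointwise multiplications, and the reduction of $s^3$ via the $s\mapsto s+1$ functional equation. You also note that $e^{\pm2\pi i x}=e^{\mp 2\pi i s}$ is such a factor, independent of $n$ precisely because $n\in\mathbb Z$. This is in fact the mechanism behind the paper's argument. Its branches are $\Psi\omega$ with $\omega\in\{\pi^{-2}\sin^2(\pi x),\ \pi^{-1}\sin(\pi x)e^{-\pi i x},\ 1\}$, which span the same space as $\{e^{2\pi i x},e^{-2\pi i x},1\}$.

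\textbf{Trade-off.} The paper's route lets it cite the standard contiguous relations of honest $G$-functions, at the price of sign and branch bookkeeping (including the meaning of $G^{2,2}_{2,3}$ at $z=-1$). Yours is self-contained and more general, at the price of re-verifying the Mellin--Barnes derivation with the extra factor. Your tail estimate does handle this correctly: $\Gamma(\alpha_1n-x+1)\Gamma(\alpha_2n-x+1)/\Gamma(1-x)^3$ decays factorially, and $e^{\pm2\pi i x}\csc^3(\pi x)$ is bounded on horizontal rays.

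\textbf{Small corrections to your sketch.}
\begin{enumerate}
\item Your functional equation has an index slip. For $m=q=3$, $n=p=2$ it reads $\prod_j(s+1-b_j)\Psi(s+1)=-\prod_i(s+1-a_i)\Psi(s)$.
\item The shifted contour does not enclose the same pole set. What you need is that $\prod_j(s-b_j)\Psi(s)\omega(s)$ has no poles between $L$ and $L+1$. This holds because $(s-b_j)\Gamma(b_j-s)=-\Gamma(b_j+1-s)$ removes the first right pole. Meanwhile, at every intermediate lattice point the left poles stay at $s\le -1$ and the right ones at $s\ge n$, since $a_i\le 0\le b_j$. So a single contour crossing at $s=-\tfrac12$ serves the whole path from $n\mathbf t$ to $(n+1)\mathbf t$. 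Moreover, since the tails are horizontal rays, $L$ and $L+1$ coincide far out, and no boundary terms arise at all.
\item No inverse matrices are needed. Because $a_i\le 0$ and the lower parameters increase, the trajectory algorithm only uses $\mathcal M_{-\mathbf e_i}$ and $\mathcal M_{\mathbf e_j}$, which are the directly derived matrices.
\end{enumerate}
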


\begin{proof}
We prove the claim through auxiliary Meijer G-function branches. These branches are introduced only for the proof:
by the CMF construction they satisfy the desired trajectory recurrence, and by applying the reflection formula to their Mellin--Barnes integrands they become an invertible constant linear transformation of \(\mathbf{I}_1(n)^{T}, \mathbf{I}_2(n)^{T}, \mathbf{I}_3(n)^{T} \).

Define
\[
f_1(z;n)
=
G^{1,2}_{2,3}\!\left(
\begin{matrix}
a_1n,\ a_2n\\
n,\ n,\ n
\end{matrix}
\middle|z
\right), \quad
f_2(z;n)
=
(-1)^n
G^{2,2}_{2,3}\!\left(
\begin{matrix}
a_1n,\ a_2n\\
n,\ n,\ n
\end{matrix}
\middle|-z
\right),
\quad
f_3(z;n)
=
G^{3,2}_{2,3}\!\left(
\begin{matrix}
a_1n,\ a_2n\\
n,\ n,\ n
\end{matrix}
\middle|z
\right),
\]
and set
\[
\mathbf{x}_j(n)^{T}
=
\begin{pmatrix}
f_j(1;n)&
\theta f_j(1;n)&
\theta^2 f_j(1;n)
\end{pmatrix},
\qquad
\theta=z\frac{d}{dz}.
\]

We first compare these branches through the CMF construction of
\(\mathcal{M}_{\mathbf{0},\mathbf{t}}(n)\), as described in
\cref{sec:meijer-g-cmf}. For our Meijer \(G\)-parameterization
\(G^{m,r-1}_{r-1,r}\), the differential equation is
\[
\left[
(-1)^{-m} z
\prod_{i=1}^{p}(\theta-a_i+1)
-
\prod_{j=1}^{q}(\theta-b_j)
\right]G(z)=0.
\]
For \(m=1\) and \(m=3\), the sign of the \(z\)-term is negative. For \(m=2\),
the sign is positive, but the substitution \(z\mapsto -z\) changes this sign
while preserving the \(\theta\)-algebra:
\[
\theta_zG(-z)=\theta_wG(w)\big|_{w=-z}.
\]
Thus all three branches have the same effective differential equation and
therefore determine the same companion matrix \(M_\theta\).

The upper-parameter contiguous matrices agree exactly.
For the lower-parameter shifts, the unsigned matrices are \(M_\theta-b_jI\), while the signs depend on \(m\).
Along one step of \(\mathbf t=(a_1,a_2;1,1,1)\), each lower parameter is shifted once, so only the product of the three lower signs matters.
For a branch with parameter \(m\), this product is \((-1)^m\).
Hence the branches with \(m=1\) and \(m=3\) satisfy the same trajectory system.
The unnormalized middle branch has the opposite sign:
\[
\widetilde{\mathbf{x}}_2(n+1)^{T}
=
-\widetilde{\mathbf{x}}_2(n)^{T}
\mathcal{M}_{\mathbf{0},\mathbf{t}}(n).
\]
Since
\[
\mathbf{x}_2(n)^{T}
=
(-1)^n\widetilde{\mathbf{x}}_2(n)^{T},
\]
we obtain
\[
\mathbf{x}_2(n+1)^{T}
=
(-1)^{n+1}\widetilde{\mathbf{x}}_2(n+1)^{T}
=
(-1)^n\widetilde{\mathbf{x}}_2(n)^{T}
\mathcal{M}_{\mathbf{0},\mathbf{t}}(n)
=
\mathbf{x}_2(n)^{T}
\mathcal{M}_{\mathbf{0},\mathbf{t}}(n).
\]
Therefore, for all \(j\in\{1,2,3\}\),
\[
\mathbf{x}_j(n+1)^{T}
=
\mathbf{x}_j(n)^{T}
\mathcal{M}_{\mathbf{0},\mathbf{t}}(n).
\]

It remains to express the auxiliary branches in terms of \(\mathbf{I}_1(n)^{T}, \mathbf{I}_2(n)^{T}, \mathbf{I}_3(n)^{T} \).
Since \(q>p\), all three branches are represented using the same right-loop contour.
After the change of variables \(x=n-s\), applying \(\theta^\ell\) multiplies
the Mellin--Barnes integrand by
\[
s^\ell=(n-x)^\ell,
\]
so the state-vector identities follow componentwise from the scalar integrand
identities.
The upper Gamma factors become
\[
\Gamma(s-a_i n+1)
=
\Gamma((1-a_i)n-x+1)
=
\Gamma(\alpha_in-x+1),
\qquad i=1,2,
\]
and the common part of the three scalar integrands is precisely \(K(n,x)\).

We use the reflection identity
\[
\frac{1}{\Gamma(1-x)}
=
\frac{\Gamma(x)\sin(\pi x)}{\pi}.
\]
For the middle branch, the normalization \((-1)^n\) and the argument \(-z\) contribute
\[
(-1)^n(-1)^s
=
(-1)^ne^{\pi i s}
=
e^{-\pi i x}.
\]
Thus
\[
\frac{\sin(\pi x)}{\pi}e^{-\pi ix}
=
\frac{1-e^{-2\pi ix}}{2\pi i},
\qquad
\left(\frac{\sin(\pi x)}{\pi}\right)^2
=
\frac{2-e^{2\pi ix}-e^{-2\pi ix}}{4\pi^2}.
\]

Substituting these scalar multipliers into the three branch integrands gives
\[
\mathbf{x}_3(n)^{T}
=
\mathbf I_3(n)^{T},
\]
\[
\mathbf{x}_2(n)^{T}
=
\frac{1}{2\pi i}
\left(
-\mathbf I_2(n)^{T}
+
\mathbf I_3(n)^{T}
\right),
\]
and
\[
\mathbf{x}_1(n)^{T}
=
\frac{1}{4\pi^2}
\left(
-\mathbf I_1(n)^{T}
-\mathbf I_2(n)^{T}
+
2\mathbf I_3(n)^{T}
\right).
\]
Equivalently,
\[
\begin{pmatrix}
\mathbf{x}_1(n)^{T}\\
\mathbf{x}_2(n)^{T}\\
\mathbf{x}_3(n)^{T}
\end{pmatrix}
=
A
\begin{pmatrix}
\mathbf{I}_1(n)^{T}\\
\mathbf{I}_2(n)^{T}\\
\mathbf{I}_3(n)^{T}
\end{pmatrix},
\]
where
\[
A=
\begin{pmatrix}
-\frac{1}{4\pi^2} & -\frac{1}{4\pi^2} & \frac{1}{2\pi^2}\\[0.8em]
0 & -\frac{1}{2\pi i} & \frac{1}{2\pi i}\\[0.8em]
0 & 0 & 1
\end{pmatrix},
\qquad
\det A=\frac{1}{8\pi^3 i}\neq0.
\]
Since \(A\) is constant and invertible,  \(\mathbf{I}_1(n)^{T}, \mathbf{I}_2(n)^{T}, \mathbf{I}_3(n)^{T} \) are constant linear combinations of the auxiliary branch solutions.
Therefore they satisfy the same trajectory system:
\[
\mathbf I_j(n+1)^{T}
=
\mathbf I_j(n)^{T}
\mathcal{M}_{\mathbf{0},\mathbf{t}}(n),
\qquad j\in\{1,2,3\}.
\]

Finally, the third auxiliary branch is the canonical branch used in \cref{def:the-cmf}, and the comparison above gives
\[
\mathcal S_3(n\mathbf t)^{T}
=
\mathbf{x}_3(n)^{T}
=
\mathbf I_3(n)^{T}.
\]
\end{proof}

\begin{lemma}
\label{lem:asymptotic-extraction}
Let
\[
\rho_j=e^{-2\pi i j/3},\qquad
H(n)=\Gamma(\alpha_1 n+1)\Gamma(\alpha_2 n+1),
\qquad
c=(\alpha_1\alpha_2)^{1/3} > 0.
\]
Then, for \(j=1,2,3\),
\[
\|\mathbf I_j(n)\|
=
H(n)
\exp\left(
-3c\,\operatorname{Re}(\rho_j)n^{2/3}
+\mathcal{O}(n^{1/3})
\right).
\]

Moreover,
\(
\mathbf I_1,\mathbf I_2,\mathbf I_3
\)
are linearly independent.

\end{lemma}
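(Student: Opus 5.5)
We plan a saddle-point (steepest descent) analysis of the contour integrals defining \(\mathbf I_j(n)\), after rescaling \(x = n^{2/3}u\). First we rewrite the integrand. By \cref{lem:gamma-asymptotics}, for \(|x| \ll n\) we have
\[
\log\Gamma(\alpha_i n - x + 1) = \log\Gamma(\alpha_i n+1) - x\log(\alpha_i n) + \mathcal{O}(x^2/n).
\]
Hence
\[
K(n,x) = H(n)\,\Gamma(x)^3\,(\alpha_1\alpha_2 n^2)^{-x}\,e^{\mathcal{O}(x^2/n)}.
\]
The factor \((\alpha_1\alpha_2 n^2)^{-x} = (cn^{2/3})^{-3x}\) shows that, up to \(H(n)\), each scalar integral is essentially a Meijer-type integral
\[
\frac{1}{2\pi i}\int \Gamma(x)^3 Y^{-3x} e^{\pm 2\pi i x}\,dx, \qquad Y=cn^{2/3}.
\]
This is a \(G^{3,0}_{0,3}\)-type function of large argument \(Y^3\), evaluated at the three rotated arguments \(Y^3 e^{\mp 2\pi i}\) and \(Y^3\). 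The polynomial weights \((n-x)^\ell\) contribute at most a factor \(n^{2}\), which is absorbed in \(\mathcal{O}(n^{1/3})\) in the exponent.

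The key step is the saddle point of \(\phi(x) = 3\log\Gamma(x) - 3x\log Y + 2\pi i \epsilon_j x\), where \(\epsilon_j \in \{1,-1,0\}\). Stirling gives \(\phi'(x) \approx 3\log x - 3\log Y + 2\pi i\epsilon_j = 0\). The saddle is therefore at
\[
x_j = Y e^{-2\pi i\epsilon_j/3} = Y\rho_j^{\pm}\quad(\text{matching } \rho_j \text{ after bookkeeping}),
\]
with value \(\phi(x_j) = -3x_j + \mathcal{O}(\log Y)\). Since \(\operatorname{Re}(3x_j) = 3c\operatorname{Re}(\rho_j)n^{2/3}\), this yields the claimed exponent \(-3c\operatorname{Re}(\rho_j)n^{2/3}\). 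The correction \(x^2/n = \mathcal{O}(n^{1/3})\) at the saddle is the origin of the \(\mathcal{O}(n^{1/3})\) error term.

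To make this rigorous, we deform \(L'\) into a steepest-descent path through \(x_j\). The path must stay within \(|x| \le \delta n\) so that the Stirling expansion of \(\Gamma(\alpha_i n - x + 1)\) is uniform. The tails are then bounded using the vertical decay estimate of \(|\Gamma(\sigma+it)|\) from \cref{lem:gamma-asymptotics}, together with the growth of \(\Gamma(\alpha_i n - x+1)\) away from the saddle. We expect this contour deformation to be the main obstacle. One must check that the deformed path does not cross the poles of \(\Gamma(\alpha_i n-x+1)\), which lie near \(x \approx \alpha_i n\), far to the right. One must also check that for \(j=1,2\) the factors \(e^{\pm2\pi i x}\) do not spoil convergence at the loop's ends; to handle this, we would first rotate the loop so that its ends go to infinity in directions where \(K_j\) decays. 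For the norm we only need an upper bound, plus a lower bound on one component (e.g., \(\ell=0\)). The lower bound follows from the nonvanishing leading Gaussian term \(\sqrt{2\pi/\phi''(x_j)}\,e^{\phi(x_j)}\).

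For linear independence, the three growth rates have distinct real parts. For \(\rho_1,\rho_2\) the real parts coincide (\(-1/2\)) but the imaginary parts differ, so \(\mathbf I_1,\mathbf I_2\) are complex conjugate–type with phases \(e^{\pm 3ic\operatorname{Im}(\rho)n^{2/3}}\). Suppose \(\sum \lambda_j \mathbf I_j \equiv 0\). Dividing by the dominant \(\mathbf I_1,\mathbf I_2\) order and using that these oscillating phases with distinct frequencies cannot cancel for all \(n\) gives \(\lambda_1=\lambda_2=0\). Then \(\lambda_3=0\), since \(\mathbf I_3 \neq 0\) by its asymptotic. Alternatively, independence follows directly from \cref{lem:pure-modes-satisfy}, since \(A\) is invertible and \(\mathbf x_1,\mathbf x_2,\mathbf x_3\) are distinct Meijer branches.
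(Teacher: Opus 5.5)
Your overall route matches the paper's: rescale $x=n^{2/3}u$, place the saddle at $c\rho_j n^{2/3}$ with value $-3c\rho_j n^{2/3}$, and deform $L'$ onto a descent path. The exponent bookkeeping is correct ($Y^3=\alpha_1\alpha_2n^2$, $\phi(x_j)=-3x_j+\mathcal{O}(\log Y)$). The difficulty is that the step you defer as ``the main obstacle'' is where the proof lives, and two of your ingredients would not carry it.

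\emph{The correction factor.} The factor $e^{\mathcal{O}(x^2/n)}$ cannot be pulled out as an error. At the saddle it has size $e^{\Theta(n^{1/3})}$. It also varies by $\mathcal{O}(1)$ across the Gaussian window, whose width in $x$ is $\asymp n^{1/3}$. So the leading term of the true integral is not $\sqrt{2\pi/\phi''(x_j)}\,e^{\phi(x_j)}$ for your truncated phase, and your lower bound is not justified as stated. The correction has to stay in the phase. The paper does this by locating the exact saddle $x_{j,n}=n^{2/3}\sigma_{j,n}$ of the full $\Psi_j$ via Rouch\'e. It then applies the parameter-uniform Laplace theorem \cref{thm:neuschel} to the normalized phase, whose limit is $\phi_j(s)=s(3\log(s/c)+\tau_j-3)$.

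\emph{Admissibility of the descent contour.} You must show that the descent path through this one saddle is an admissible $L'$. That means a loop whose two ends go to $-\infty$ on opposite sides of $(-\infty,0]$, which avoids $0$ and the cut, and which keeps the poles $\alpha_i n+1+k$ on the other side. This is what guarantees that the single saddle accounts for the whole integral; the full phase has further critical points near the negative axis, between the poles of $\Gamma(x)$. It is Step~1 of the paper. The tools you name do not fit this geometry. The path cannot stay in $|x|\le\delta n$, since its ends go to infinity. Those ends run toward $\operatorname{Re}x\to-\infty$, where you need reflection plus Stirling for $\Gamma(1-x)$ in a sector, not the vertical-strip bound on $|\Gamma(\sigma+it)|$. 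Conversely, no rotation is needed for convergence: on such tails $1/\Gamma(1-x)^3$ beats $\Gamma(\alpha_i n-x+1)^2$ factorially, and $e^{\pm 2\pi i x}$ stays bounded.

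\emph{Linear independence (genuine gap).} Your estimate fixes the exponent only up to $\mathcal{O}(n^{1/3})$. Hence the phase of $\mathbf I_1(n)/\mathbf I_2(n)$ is known only up to $\mathcal{O}(n^{1/3})$, which is arbitrarily many multiples of $2\pi$. A size bound on the error alone cannot exclude $\mathbf I_1/\mathbf I_2\to-\lambda_2/\lambda_1$. ``Distinct frequencies cannot cancel'' needs regularity of the error, not just its size. The paper supplies this with a one-order-finer expansion. It shows that the argument of $\mathcal A_1/\mathcal A_2$ increases by $2\sqrt3\,c\,n^{-1/3}+\mathcal{O}(n^{-2/3})$ per step, using $\rho_1-\rho_2=-i\sqrt3$. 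Since $\sum n^{-1/3}=\infty$, the ratio keeps rotating and cannot converge to a nonzero constant.

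Your alternative via \cref{lem:pure-modes-satisfy} does not work either. Invertibility of $A$ only transfers independence between the $\mathbf x_j$ and the $\mathbf I_j$. Being ``distinct Meijer branches'' does not show that $G^{1,2}_{2,3}(z)$, $(-1)^nG^{2,2}_{2,3}(-z)$ and $G^{3,2}_{2,3}(z)$ are linearly independent solutions of the common equation. That is precisely what would have to be proved.
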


The proof is given in \cref{proof:asymptotic-extraction}

\begin{lemma}[Recessive Trajectories]
\label{lem:recessive-trajectories}
Let \(\mathbf{t}\in\mathcal{T}_3^*\). Then the difference system defined by
\(
\mathcal{M}_{\mathbf{0},\mathbf{t}}(n)
\)
admits a strongly recessive solution represented by the state vector
\(
\mathcal{S}_3(n\mathbf{t})^T
\)
defined in \cref{def:the-cmf}.
\end{lemma}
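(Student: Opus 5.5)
The proof combines \cref{lem:pure-modes-satisfy} and \cref{lem:asymptotic-extraction}. By \cref{lem:pure-modes-satisfy}, the three sequences $\mathbf I_1(n)^T,\mathbf I_2(n)^T,\mathbf I_3(n)^T$ solve the primal system $\mathbf x(n+1)^T=\mathbf x(n)^T\mathcal M_{\mathbf 0,\mathbf t}(n)$. By \cref{lem:asymptotic-extraction}, they are linearly independent. Since the system has rank $r=3$, they form a basis of its solution space. Moreover, $\mathbf I_3(n)^T=\mathcal S_3(n\mathbf t)^T$, so it suffices to show that $\mathbf I_3$ is strongly recessive in the sense of \cref{def:strongly-recessive}. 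That is, $\mathbf I_3$ must be strictly dominated by every solution not proportional to it, with a superpolynomial gap.

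Next I compute the exponents. With $\rho_j=e^{-2\pi i j/3}$, we have $\operatorname{Re}(\rho_1)=\operatorname{Re}(\rho_2)=-\tfrac12$ and $\operatorname{Re}(\rho_3)=1$. Hence
\[
\|\mathbf I_{1,2}(n)\|=H(n)\exp\left(\tfrac32 c\,n^{2/3}+\mathcal O(n^{1/3})\right),\qquad
\|\mathbf I_3(n)\|=H(n)\exp\left(-3c\,n^{2/3}+\mathcal O(n^{1/3})\right).
\]
Since $c>0$, the ratio satisfies
\[
\frac{\|\mathbf I_3(n)\|}{\|\mathbf I_j(n)\|}=\exp\left(-\tfrac92 c\,n^{2/3}+\mathcal O(n^{1/3})\right)\qquad (j=1,2).
\]
This decays faster than any $n^{-d}$. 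Therefore $\lim_{n\to\infty} n^d\|\mathbf I_3(n)\|/\|\mathbf I_j(n)\|=0$ for every $d>0$.

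The main obstacle is passing from the two specific dominant solutions to the ordered basis of \cref{def:asymptotics}. One must rule out the possibility that some combination $\alpha\mathbf I_1+\beta\mathbf I_2$ with $(\alpha,\beta)\ne 0$ cancels down to the size of $\mathbf I_3$.

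I would first try to use the Birkhoff--Trjitzinsky structure. The formal solutions have the shape
\[
H(n)\exp\left(-3c\rho_j n^{2/3}+\dots\right)\cdot(\text{series}).
\]
For $j=1,2$ the phases $\operatorname{Im}(\rho_j)$ are opposite, and both leading exponents have the same real part $\tfrac32 c$. The two terms can interfere pointwise in $n$ but cannot cancel identically, because their exponents $\exp(\pm i\frac{3\sqrt3}{2}c\,n^{2/3})$ are distinct. So $\|\alpha\mathbf I_1+\beta\mathbf I_2\|$ has $\limsup$ of order $H(n)e^{\frac32 c n^{2/3}+\mathcal O(n^{1/3})}$.

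To make this cleaner, I would take real and imaginary parts of $\mathbf I_1$ and $\mathbf I_2$ as a real basis of the dominant subspace. The recessive direction is then the one-dimensional span of $\mathbf I_3$. It is singled out as the unique solution whose norm is $H(n)e^{-3cn^{2/3}+\mathcal O(n^{1/3})}$; every solution outside this line grows like $H(n)e^{+\frac32 c n^{2/3}}$ along a subsequence, which is what the dual-convergence argument of \cref{lem:convergence-criteria} requires. This yields both the unique recessive asymptotic of \cref{def:unique-recessive} and its strong form, with recessive solution $\mathcal S_3(n\mathbf t)^T$.
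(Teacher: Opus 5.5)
Your argument is correct and is the paper's proof. \cref{lem:pure-modes-satisfy} and \cref{lem:asymptotic-extraction} give the basis $\mathbf I_1,\mathbf I_2,\mathbf I_3$; the values $\operatorname{Re}(\rho_{1,2})=-\tfrac12$ and $\operatorname{Re}(\rho_3)=1$ give the superpolynomially small ratio $\exp(-\tfrac92 c\,n^{2/3}+\mathcal O(n^{1/3}))$; and $\mathbf I_3(n)^T=\mathcal S_3(n\mathbf t)^T$ finishes it.

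Your extra discussion of combinations $\alpha\mathbf I_1+\beta\mathbf I_2$ is not needed, since the paper's definitions only compare the recessive basis element with the other basis elements, and your subsequence bound would not give the $\lim$ in your stronger reformulation anyway. If you keep it, the non-cancellation rests on the appendix's phase-increment estimate for $\mathcal A_1(n)/\mathcal A_2(n)$, not just on the leading exponents being distinct, because the crude $\mathcal O(n^{1/3})$ error alone does not control the relative phase.
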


\begin{proof}
By \cref{lem:pure-modes-satisfy},
\(
\mathbf I_1(n)^{T},
\mathbf I_2(n)^{T},
\mathbf I_3(n)^{T}
\)
are solutions of the trajectory system, and by
\cref{lem:asymptotic-extraction} they differ asymptotically and are linearly independent. Thus, they form a Birkhoff-Trjitzinsky basis of solutions. 
Moreover,
\[
\operatorname{Re}(\rho_1)
=
\operatorname{Re}(\rho_2)
=
-\frac12,
\qquad
\operatorname{Re}(\rho_3)=1.
\]
Therefore, for \(j=1,2\),
\[
\frac{\|\mathbf I_3(n)\|}{\|\mathbf I_j(n)\|}
=
\exp\left(
-\frac{9c}{2}n^{2/3}
+
\mathcal O(n^{1/3})
\right),
\]
which decays faster than any power of \(n\).
Thus \(\mathbf I_3(n)^{T}\) is the strongly recessive solution in
this basis.

Finally, by \cref{lem:pure-modes-satisfy},
\[
\mathbf I_3(n)^{T}
=
\mathcal S_3(n\mathbf t)^{T}.
\]
Hence \(\mathcal S_3(n\mathbf t)^{T}\) is a strongly recessive
solution of the difference system defined by
\(\mathcal M_{\mathbf0,\mathbf t}(n)\).
\end{proof}

\subsection{Rational Gauge Invariance}

\begin{lemma}
\label{lem:gauge-invariance-recessive}
Let $\mathbf{x}(n+1)^T = \mathbf{x}(n)^T M(n)$ be a linear difference system admitting a strongly recessive asymptotic (\cref{def:strongly-recessive}) denoted by $\mathbf{x}_r(n)^T$. 

Let $A(n) \in \mathrm{GL}_r(\mathbb{Q}(n))$ be a rational gauge transformation matrix, defining the gauge-equivalent system $C(n)=A(n)^{-1}M(n) A(n+1)$.

Then, the gauged system $C(n)$ admits a strongly recessive asymptotic equal to $\mathbf{x}_r(n)^TA(n)$.
\end{lemma}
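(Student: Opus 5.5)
The plan has three steps: transport the solution basis through the gauge, bound $A(n)$ and $A(n)^{-1}$ polynomially in $n$, and show that polynomial factors cannot destroy a superpolynomial separation.

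First, the gauge maps solutions to solutions. If $\mathbf{x}(n+1)^T=\mathbf{x}(n)^T M(n)$ and we set $\mathbf{y}(n)^T:=\mathbf{x}(n)^T A(n)$, then
\[
\mathbf{y}(n)^T C(n)=\mathbf{x}(n)^T A(n)A(n)^{-1}M(n)A(n+1)=\mathbf{x}(n+1)^T A(n+1)=\mathbf{y}(n+1)^T .
\]
This holds for all $n$ beyond the finitely many poles of $A$ and $A^{-1}$, which is all that asymptotics require. Since $A(n)$ is invertible for large $n$, the images $\mathbf{y}_i(n)^T=\mathbf{x}_i(n)^T A(n)$ of the basis $\mathbf{x}_1,\dots,\mathbf{x}_r$ again form a basis of solutions of $C(n)$.

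Second, we control the size change. The entries of $A(n)$ and $A(n)^{-1}$ are rational functions of $n$, so there exist $D\in\mathbb{N}$ and $K>0$ with $\|A(n)\|\le K n^{D}$ and $\|A(n)^{-1}\|\le K n^{D}$ for all large $n$. Consequently, for every solution,
\[
K^{-1}n^{-D}\|\mathbf{x}(n)\|\le\|\mathbf{y}(n)\|\le K n^{D}\|\mathbf{x}(n)\|.
\]
The gauge therefore changes norms by at most polynomial factors.

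Third, we transfer strong recessiveness. For any $i<r$ and any integer $d>0$,
\[
n^d\frac{\|\mathbf{y}_r(n)\|}{\|\mathbf{y}_i(n)\|}\le K^2 n^{d+2D}\frac{\|\mathbf{x}_r(n)\|}{\|\mathbf{x}_i(n)\|}\longrightarrow 0 .
\]
For $i=r-1$ this limit is exactly the hypothesis with exponent $d+2D$. For $i<r-1$ it follows from the ordering $\|\mathbf{x}_{r-1}\|=\mathcal{O}(\|\mathbf{x}_i\|)$.

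The main obstacle is the following. Strong recessiveness is phrased through the ordered Birkhoff–Trjitzinsky basis of the system, so we must check that $\mathbf{y}_r$ really is the recessive member of $C(n)$'s formal basis, not merely a solution that is small relative to the specific vectors $\mathbf{y}_i$. My approach is to note that the formal types transform cleanly. Multiplying a generalized series $n!^{\kappa}\lambda^n e^{(\cdots)}n^{\alpha}\sum_j(\log n)^j\sum_k\mathbf{c}_k n^{-k/\rho}$ by the rational matrix $A(n)$, expanded as a Laurent series in $n^{-1}$ at infinity, yields a series of the same type. The factorial, exponential and root-exponential parts are unchanged, and only $\alpha$ shifts, by an integer bounded by $D$. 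Hence $\mathbf{u}_i^T A$ are the formal solutions of $C$, with $\mathbf{y}_i\sim\mathbf{u}_i^T A$.

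A further subtlety is that the leading coefficient could cancel against the kernel of the leading term of $A$. This only lowers the power of $n$, and the two-sided bound of the second step already shows that any such shift is at most polynomial. The asymptotic ordering, up to polynomial factors, is therefore preserved. The strict superpolynomial gap isolates $\mathbf{y}_r$ as the unique recessive member, which gives the claim.
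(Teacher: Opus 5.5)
Your proposal is correct and follows the paper's route: you transport the basis via $\mathbf{y}_i(n)^T=\mathbf{x}_i(n)^T A(n)$, bound $\|A(n)\|$ and $\|A(n)^{-1}\|$ polynomially to get two-sided norm comparisons, and conclude from $n^d\|\mathbf{y}_r(n)\|/\|\mathbf{y}_i(n)\|\le K^2n^{d+2D}\|\mathbf{x}_r(n)\|/\|\mathbf{x}_i(n)\|\to0$. Your additional checks make explicit points the paper leaves implicit: that the gauge maps solutions to solutions, that the case $i<r-1$ needs the ordering, and that the gauged formal series still form the ordered Birkhoff--Trjitzinsky basis of $C(n)$.
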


\begin{proof}
For all sufficiently large $n$, the rational gauge matrix $A(n)$ defines an invertible correspondence between the solution spaces of the two systems.
Let $\mathbf{x}_i(n)$ be the primal basis solutions of $M(n)$. The corresponding primal solutions of the gauged system $C(n)$ are given by the mapping:
\[
\mathbf{z}_i(n)^T = \mathbf{x}_i(n)^T A(n)
\]

Because $A(n) \in \mathrm{GL}_r(\mathbb{Q}(n))$, its induced matrix norm is bounded by some positive constants $C_1, C_2 \in \mathbb{N}$ and polynomial degrees $d_1, d_2 \in \mathbb{N}$ such that for all sufficiently large \(n\):
\[
\|A(n)\| \le C_1 n^{d_1} \quad \text{and} \quad \|A(n)^{-1}\| \le C_2 n^{d_2}
\]

By the sub-multiplicativity of induced matrix norms:
\[
\|\mathbf{z}_i(n)^T\| = \|\mathbf{x}_i(n)^T A(n) \| \le \|\mathbf{x}_i(n)^T \| \cdot \| A(n)\| \le C_1 n^{d_1} \|\mathbf{x}_i(n)^{T}\|
\]

Similarly:
\[
\|\mathbf{x}_{i}(n)^T\| =  \|\mathbf{z}_{i}(n)^T A(n)^{-1} \| \le \|\mathbf{z}_{i}(n)^T\| \cdot \|A(n)^{-1}\| \le C_2 
n^{d_2} \|\mathbf{z}_{i}(n)^T\|
\]

Denoting $C=\max(C_1, C_2)$ and $d = \max(d_1, d_2)$, we obtain the bounds:
\[
\frac{1}{C\cdot n^d} \| \mathbf{x}_i(n)\| \le \|\mathbf{z}_{i}(n)\| \le C \cdot n^d \|\mathbf{x}_i(n)\|
\]

Finally, the gauged system satisfies the strongly recessive criteria. For any $b\in\mathbb{N}$, we have
\[
0 \le n^b\frac{\|\mathbf{z}_r(n)\|}{\|\mathbf{z}_{i}(n)\|} \le C^2n^{2d+b} \frac{\|\mathbf{x}_r(n)\|}{\|\mathbf{x}_{i}(n)\|} \xrightarrow[n\to\infty]{} 0, \qquad i<r
\]

The upper bound unconditionally vanishes due to the assumption that $\mathbf{x}_r(n)$ is a strongly recessive asymptotic (\cref{def:strongly-recessive}). Therefore, the mapped state $\mathbf{z}_r(n)$ strictly satisfies the definition of a strongly recessive asymptotic for the gauged system $C(n)$. 
\end{proof}

\begin{corollary}
\label{cor:gauge-invariance-limit}
Let $\mathcal{S}_r(0) = \mathbf{x}_r(0)$ be the initial state of the strongly recessive asymptotic of $\mathcal{M}_{\mathbf{0}, \mathbf{t}}$.

Let $U(n)\in\mathrm{GL}_r(\mathbb Q(n))$ be a rational gauge that is regular
and invertible for every $n\in\mathbb N_0$, and define
\[
C(n)=U(n)^{-1}\mathcal M_{\mathbf0,\mathbf t}(n)^{-T}U(n+1).
\]

For any vector \(\mathbf v\in\mathbb R^r\) generic with respect to the gauge-equivalent dual system \(C(n)\) (\cref{def:generic-vector}), the forward iteration isolates the gauged initial state up to scaling.
That is, there exists a sequence $c_n\in\mathbb{R}\setminus\{0\}$ such that

\[
\lim_{n\to\infty}
c_n\Phi_C(n)\mathbf{v}
=
U(0)^{-1}\mathcal{S}_r(\mathbf{0}).
\]

\end{corollary}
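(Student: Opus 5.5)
The plan is to reduce the claim to \cref{lem:convergence-criteria}, applied to the gauged system, and then to transport the limit back along the exact gauge identity. There are two routes. The first is to treat $C(n)$ as the dual of the primal system $\widetilde M(n):=C(n)^{-T}=U(n)^{T}\mathcal M_{\mathbf0,\mathbf t}(n)U(n+1)^{-T}$, which is gauge-equivalent to $\mathcal M_{\mathbf0,\mathbf t}(n)$ through the rational gauge $A(n)=U(n)^{-T}$. The second is to work directly with $\Phi_C(n)=U(0)^{-1}\Phi_{\mathcal M_{\mathbf0,\mathbf t}}(n)^{-T}U(n)$, the transition-matrix relation of \cref{def:cyclic-vector-method}, which holds because $U$ is regular and invertible on all of $\mathbb N_0$. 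I would use the first route to get convergence and the second to identify the limit.

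First, by \cref{lem:recessive-trajectories} the primal system $\mathcal M_{\mathbf0,\mathbf t}(n)$ has the strongly recessive solution $\mathbf x_r(n)^T=\mathcal S_3(n\mathbf t)^T$, with $\mathbf x_r(0)=\mathcal S_r(\mathbf 0)$. Then \cref{lem:gauge-invariance-recessive}, applied with $A(n)=U(n)^{-T}$, shows that $\widetilde M$ has the strongly recessive, hence unique recessive, solution $\mathbf z_r(n)^T=\mathbf x_r(n)^TU(n)^{-T}$. Its initial value is $\mathbf z_r(0)=U(0)^{-1}\mathcal S_r(\mathbf 0)$. This step needs $U(0)$ to exist and be invertible, and that is where regularity at $n=0$ is used.

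Second, I apply \cref{lem:convergence-criteria} to the primal system $\widetilde M$. Its dual system is exactly $C(n)$, and $\Phi_{\widetilde M}(n)^{-T}=\Phi_C(n)$. The hypothesis is that $\mathbf v$ is generic for $C(n)$, which is precisely the dual genericity the lemma requires. The lemma then yields $c_n\neq0$ with $c_n\Phi_C(n)\mathbf v\to\mathbf z_r(0)=U(0)^{-1}\mathcal S_r(\mathbf 0)$, which is the claim. As a consistency check, the same conclusion follows from the identity $\Phi_C(n)\mathbf v=U(0)^{-1}\Phi_{\mathcal M}(n)^{-T}(U(n)\mathbf v)$. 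The limit direction is independent of the moving seed $U(n)\mathbf v$, because $\mathbf x_r(n)^TU(n)^{-T}$ pairs trivially against every non-recessive direction; this matches the fact that the limit is $\mathbf z_r(0)$.

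The main obstacle is making sure \cref{lem:convergence-criteria} really applies to $\widetilde M$ over the whole forward orbit. Two things need checking. First, $\widetilde M(n)$ must be invertible for every $n\ge0$, not just eventually; this follows from the invertibility of $U(n)$ and $\mathcal M_{\mathbf0,\mathbf t}(n)$ at each $n\in\mathbb N_0$. Second, the recessive solution $\mathbf z_r$, which a priori is defined only for large $n$ through \cref{lem:gauge-invariance-recessive}, must be the genuine orbit with initial value $U(0)^{-1}\mathcal S_r(\mathbf 0)$. I would settle this by defining $\mathbf z_r(n)$ directly from $\mathbf x_r(n)$ and $U(n)^{-T}$ for all $n\ge0$ and checking the recurrence $\mathbf z_r(n+1)^T=\mathbf z_r(n)^T\widetilde M(n)$ algebraically. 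Regularity at every step then forces the backward continuation to agree.
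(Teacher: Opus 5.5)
Your proposal is correct and matches the paper's proof. The paper likewise applies \cref{lem:gauge-invariance-recessive} with gauge $U(n)^{-T}$ to conclude that $C(n)^{-T}=U(n)^{T}\mathcal M_{\mathbf0,\mathbf t}(n)U(n+1)^{-T}$ has the strongly recessive solution $\mathcal S_r(n\mathbf t)^{T}U(n)^{-T}$, with initial value $U(0)^{-1}\mathcal S_r(\mathbf 0)$, and then invokes \cref{lem:convergence-criteria} for this primal system, whose dual is $C(n)$. Your second route via $\Phi_C(n)=U(0)^{-1}\Phi_{\mathcal M_{\mathbf0,\mathbf t}}(n)^{-T}U(n)$ is not needed, and your check that $\mathbf x_r(n)^TU(n)^{-T}$ is a genuine orbit for all $n\ge0$ is left implicit in the paper, though it usefully shows where regularity of $U$ on $\mathbb N_0$ is used.
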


\begin{proof}

By \cref{lem:gauge-invariance-recessive}, the system $C(n)^{-T}=U(n)^T \mathcal{M}_{\mathbf{0}, \mathbf{t}}(n)U(n+1)^{-T}$ admits the strongly recessive asymptotic $\mathcal{S}_r(n\mathbf{t})U(n)^{-T}$.

The result is then achieved directly by \cref{lem:convergence-criteria}.
\end{proof}

\subsection{Proof of~\texorpdfstring{\Cref{thm:dual-convergence}}{Theorem~\ref{thm:dual-convergence}}}

\begin{proof}[Proof of \cref{thm:dual-convergence}]
Let \(r=3\) and \(\mathbf{t}\in \mathcal{T}^*_{r=3}\).

By \cref{lem:recessive-trajectories},
\(\mathcal S_r(n\mathbf t)^{T}\) is a strongly recessive solution of the system defined by $\mathcal{M}_{\mathbf{0}, \mathbf{t}}(n)$.

From \cref{lem:convergence-criteria}, for any generic vector $\mathbf{v} \in \mathbb{R}^r$, the transition matrix of the dual system isolates the unique recessive solution at $n=0$ up to scaling. Hence, there exists a sequence $c_n\in\mathbb{R}\setminus\{0\}$ such that
\[
\lim_{n\to\infty}
c_n\Phi_{\mathcal{M}_{\mathbf{0},\mathbf{t}}}(n)^{-T}\mathbf{v}
=
\mathcal{S}_{r}(\mathbf{0}).
\]

Finally, conversion to an Ap\'ery-like recurrence by the CVM algorithm (\cref{def:cyclic-vector-method}) is a rational gauge transformation, so the same isolation is preserved by \cref{cor:gauge-invariance-limit} whenever the gauge is regular and invertible on \(\mathbb{N}_0\).
\end{proof}

\subsection{Example}
\label{sec:saddle-point-examples}

We illustrate the preceding construction for the trajectory with upper
parameters \((0,0)\) and lower parameters \((1,1,1)\), namely \( \mathbf t=(0,0;1,1,1) \).
This yields the following trajectory matrix of order 3:
\[
\mathcal{M}_{\mathbf{0}, \mathbf{t}}(n) = \begin{bmatrix}
    1 & n^{3} & (n-1)(3n+3)(n^2+n+1) \\
    2 & 1 - 3 n^{2} & - (n + 1) (8 n^{2} + n + 5)\\
    1 & 3 n + 2 & 6 n (n + 1)
\end{bmatrix}.
\]

In this case \(a_1=a_2=0\), so \( c=\big((1-a_1)(1-a_2)\big)^{1/3}=1 \) and \( H(n) = \Gamma(n+1)^2 = (n!)^2 \).

By \cref{lem:asymptotic-extraction}, we have:
\[
\|\mathbf I_1(n)\|,
\|\mathbf I_2(n)\|
=
(n!)^2
\exp\left(
\frac32 n^{2/3}
+
\mathcal O(n^{1/3})
\right), \qquad
\|\mathbf I_3(n)\|
=
(n!)^2
\exp\left(
-3 n^{2/3}
+
\mathcal O(n^{1/3})
\right).
\]
Hence \(\mathbf I_3(n)\) is superpolynomially smaller than the two dominant conjugate solutions.
Since $\mathcal S_3(n\mathbf t)^T=\mathbf I_3(n)^T$, the state vector is the strongly recessive solution of the trajectory system.

This agrees with the asymptotic splitting obtained computationally from the
gauged companion system using RISC's Asymptotics \cite{kauers2011mathematica}.
By \cref{lem:gauge-invariance-recessive}, conversion to companion form by CVM
preserves the exponential scales up to polynomial factors.

The solver returns the three exponential phases
\begin{align*}
u_1(n) &= n^{2n} e^{-2n} \exp\left(-3 n^{2/3} + \mathcal{O}(n^{1/3})\right) \\
u_2(n) &= n^{2n} e^{-2n} \exp\left(-3 \omega n^{2/3} + \mathcal{O}(n^{1/3})\right) \\
u_3(n) &= n^{2n} e^{-2n} \exp\left(-3 \omega^2 n^{2/3} + \mathcal{O}(n^{1/3})\right) 
\end{align*}
where \(\omega = e^{2\pi i / 3}\).

The common factor \(n^{2n}e^{-2n}\) agrees with \((n!)^2\) up to a polynomial factor by Stirling's formula.
Thus the computational output matches \cref{lem:asymptotic-extraction}, up to the ordering of the basis.

\section{Examples}
\label{sec:examples}

In this section, we demonstrate the full sequence generation pipeline outlined in \cref{fig:algorithm-pipeline}.

All examples presented here, along with the automated generation framework, are available in the \href{https://colab.research.google.com/drive/189rWd7_7M1i2d5E03OoMwGTWVuG1V75l?usp=sharing}{supplementary Colab notebook}.
The implementation relies on the \texttt{ramanujantools} package \cite{https://doi.org/10.5281/zenodo.15747099}, utilizing python-flint \cite{hart2013flint} to optimize heavy polynomial arithmetic.

To illustrate the mechanics of the Conservative Matrix Field, we focus on the CMF case $\mathcal{M} = \mathcal{M}_3$, as defined in \cref{def:the-cmf}.
The CMF is of rank $r=3$ and lattice dimension $2r-1=5$, which yields holonomic sequences of order 3.

We fix our source point at the origin $\mathbf{0} \in \mathbb{Z}^5$.
As derived in \cref{prop:g-function-identity} and explicitly verified in \cref{ex:identity-2}, any trajectory matrix $\mathcal{M}_{\mathbf{0}, \mathbf{t}}(n)$ from this source that satisfies strict dual convergence criteria (\cref{lem:convergence-criteria}) will converge to $\gamma$ when projected onto the following initial value matrix:
\[
\begin{bmatrix} p(-3) & p(-2) & p(-1) \\ q(-3) & q(-2) & q(-1) \end{bmatrix} = \begin{bmatrix} 1 & 1 & 0 \\ 1 & 1 & 1 \end{bmatrix}
\]

\subsection{Full Algorithm Showcase}
\label{subsec:full-algorithm}

Let $\mathbf{t} = (0, 0; 1, 1, 1)^T$. This trajectory lies within the verified converging family (\cref{lem:recessive-trajectories}) therefore convergence is mathematically guaranteed. We calculate the trajectory matrix:
\[
\mathcal{M}_{\mathbf{0}, \mathbf{t}}(n) = \left[\begin{matrix}
    1 & n^{3} & (n-1)(3n+3)(n^2+n+1) \\
    2 & 1 - 3 n^{2} & - \left(n + 1\right) \left(8 n^{2} + n + 5\right)\\
    1 & 3 n + 2 & 6 n \left(n + 1\right)
\end{matrix}\right]
\]

We then calculate the dual matrix $\mathcal{M}_{\mathbf{0}, \mathbf{t}}(n)^{-T}$ (normalizing rational functions to polynomials):
\[
\mathcal{M}_{\mathbf{0}, \mathbf{t}}(n)^{-T} = \frac{1}{(n+1)^4}\left[\begin{matrix}
6 n^{2} + 13 n + 10 & - 8 n - 5 & 3 \\
3 n^{3} + 3 n^{2} - 3 n - 6 & - 3 n^{2} + 3 n + 3 & n - 2 \\
n^{4} - 2 n^{3} - 6 n^{2} - 3 n + 3 & 6 n^{2} + 2 n - 1 & 1 - 2 n
\end{matrix}\right]
\]

Applying the cyclic vector method (\cref{def:cyclic-vector-method}) retrieves the following gauge matrix:
\[
U(n) =
\begin{bmatrix}1 & 6 n^{2} + 13 n + 10 & 15 n^{4} + 123 n^{3} + 385 n^{2} + 540 n + 284\\0 & 3 n^{3} + 3 n^{2} - 3 n - 6 & 10 n^{5} + 66 n^{4} + 143 n^{3} + 52 n^{2} - 183 n - 169\\0 & n^{4} - 2 n^{3} - 6 n^{2} - 3 n + 3 & 6 n^{6} + 29 n^{5} + 18 n^{4} - 119 n^{3} - 209 n^{2} - 33 n + 83\end{bmatrix}
\]

Applying the gauge transform on the normalized matrix $(n+1)^4 \mathcal{M}_{\mathbf{0}, \mathbf{t}}(n)^{-T}$ yields a companion matrix representing the following holonomic sequence:
\begin{align*}
0 &= p{\left(n - 3 \right)} \cdot \left(n + 1\right)^{4} \left(n + 2\right)^{2} \left(8 n^{7} + 99 n^{6} + 504 n^{5} + 1350 n^{4} + 2013 n^{3} + 1590 n^{2} + 532 n + 27\right) \\
& - p{\left(n - 2 \right)} \cdot \left(n + 2\right)^{2} \big(24 n^{9} + 273 n^{8} + 1246 n^{7} + 2883 n^{6} + 3408 n^{5} + 1072 n^{4} - 3051 n^{3} - 5553 n^{2} \\
& \qquad - 4181 n - 1242\big) \\
& + p{\left(n - 1 \right)} \cdot \left(24 n^{9} + 313 n^{8} + 1616 n^{7} + 4059 n^{6} + 4623 n^{5} + 435 n^{4} - 4470 n^{3} - 4923 n^{2} - 1749 n + 243\right)\\
& + p{\left(n \right)} \cdot \left(- 8 n^{7} - 43 n^{6} - 78 n^{5} - 35 n^{4} + 47 n^{3} + 72 n^{2} + 27 n - 9\right)
\end{align*}

Using the telescopic relation provided by the CVM gauge matrix $U(n)$, we transform our initial conditions to fit this specific scalar sequence:
\[
\resizebox{\textwidth}{!}{$
\begin{bmatrix} p'(-3) & p'(-2) & p'(-1) \\ q'(-3) & q'(-2) & q'(-1) \end{bmatrix} =
\begin{bmatrix} p(-3) & p(-2) & p(-1) \\ q(-3) & q(-2) & q(-1) \end{bmatrix} U(0) = 
\left[\begin{matrix}
    1 & 1 & 0 \\
    1 & 1 & 1\end{matrix}\right]
\cdot
\left[\begin{matrix}
    1 & 10 & 284 \\
    0 & -6 & -169 \\
    0 & 3 & 83
\end{matrix}\right] =
\left[\begin{matrix}
    1 & 4 & 115 \\
    1 & 7 & 198
\end{matrix}\right]
$}
\]

The holonomic sequences $p'(n)$ and $q'(n)$ defined by these initial values satisfy
\[\lim_{n\to\infty}\frac{p'(n)}{q'(n)}=\gamma.\]

\subsection{Trajectories Violating the Convergence Criteria} 
To understand the limits of this approach, we consider three trajectories that violate the requirements of the convergence criteria (\cref{lem:convergence-criteria}).
In order to verify that these trajectories do not satisfy it, we use the Birkhoff-Trjitzinsky engine \cite{birkhoff1933analytic} of the \texttt{ramanujantools} pacakge \cite{https://doi.org/10.5281/zenodo.15747099}, which was validated against RISC's \texttt{Asymptotics} package~\cite{kauers2011mathematica}.

To run it, we convert our forward system into a companion form using the CVM algorithm~\cref{def:cyclic-vector-method}.
As defined in~\cref{sec:meijer-g-cmf}, our trajectory matrix $\mathcal{M}_{\mathbf{0}, \mathbf{t}}(n)$ satisfies the following forward system:

$$\mathcal{S}_\mathbf{t}(n+1)^T = \mathcal{S}_\mathbf{t}(n)^T\mathcal{M}_\mathbf{0, t}(n),$$ 
where
$$\mathcal{S}_\mathbf{t}(n) := S_3(n\mathbf{t}; 1)$$
and
\[
\mathcal{S}_3(n\mathbf{t};z) := [G^{3, 2}_{2, 3}(n\mathbf{t}; z), \theta G^{3, 2}_{2, 3}(n\mathbf{t} ;z), \dots, \theta^{r-1}G^{3, 2}_{2, 3}(n\mathbf{t} ; z)]
\]

The CVM algorithm constructs a gauge matrix $U(n)$ such that $C(n) = U(n)^{-1}\mathcal{M}_{\mathbf{s, t}}(n)U(n+1)$ is a companion matrix representing a scalar recurrence.
Mapping a forward-looking CVM equation into a standard backward-looking companion form implicitly aliases the base state to the highest forward index, thus the natural solution to this derived recurrence is our Meijer G-function parameterization, reflecting an index shift equal to the order of the system:
$$p(n) = G^{3, 2}_{2, 3}((n+3)\mathbf{t}; 1).$$

Because the gauge matrix $U(n)$ is rational in $n$, the asymptotic separation hierarchy is preserved when converting the system if the separation (of the unique recessive solution) is exponential or larger, which is the case in all three examples.

\subsubsection{Example 1}

Let $\mathbf{t}_1 = (0, 0; 0, 0, 1)^T$, corresponding to the trajectory matrix

\[
\mathcal{M}_{\mathbf{0}, \mathbf{t}_1}(n) =
\left[\begin{matrix}
    n^{2} & -1 & 0 \\
    - 2 n & -2 & \left(n - 1\right) \left(n + 1\right) \\
    1 & -1 & - 2 n - 2
\end{matrix}\right]
\]

Examining the asymptotics of the gauged companion form:

\[
u_1(n) = n^{2} \log{\left(n \right)} n!, \quad
u_2(n) = n^{2} n!, \quad
u_3(n) = 1
\]

While this system does have a unique recessive solution, it is definitely not our state vector, as
\[\lim_{n\to\infty}{G^{3, 2}_{2, 3}(n\mathbf{t_1}; 1)} = \infty.\]

Therefore, the dual system is expected to converge to a different constant, as seen in~\cref{fig:convergence_contrast}.

\subsubsection{Example 2}

Let $\mathbf{t}_2 = (0, 0; 0, 1, 1)^T$, corresponding to the trajectory matrix

\[
\mathcal{M}_{\mathbf{0}, \mathbf{t}_2}(n) =
\left[\begin{matrix}
    2n & -\left(n - 1\right) \left(2 n^{2} + 2 n + 1\right) & -\left(n + 1\right) \left(4 n^{3} + 2 n - 3\right) \\
    2 n - 1 & 5 n^{2} + 3 n + 1 & - \left(- 2 n - 2\right) \left(4 n^{2} + 3\right) \\
    -1 & - 2 n - 1 & - \left(n + 1\right) \left(3 n - 2\right)
\end{matrix}\right]
\]

Examining the asymptotics of the gauged companion form:
\[
u_1(n) = n^{4} \left(n!\right)^{2}, \quad
u_2(n) = \left(-1\right)^{n} n^{\frac{11}{4}} e^{2 i \sqrt{n}} n!, \quad
u_3(n) = \left(-1\right)^{n} n^{\frac{11}{4}} e^{- 2 i \sqrt{n}} n!
\]

Because the primal system has two conjugate recessive solutions, the dual system is expected to oscillate, as seen in \cref{fig:convergence_contrast}.

\subsubsection{Example 3}

Let $\mathbf{t}_3 = (0, 0; 1, 2, 2)^T$, corresponding to the trajectory matrix
\begin{equation*}
\resizebox{\textwidth}{!}{
$\displaystyle
\mathcal{M}_{\mathbf{0}, \mathbf{t}_3}(n) = 
\left[\begin{matrix}
    4 n^{4} + 4 n^{3} + 4 n^{2} - n - 2 & n \left(4 n^{4} - 12 n^{3} - 12 n^{2} + 3 n + 2\right) & \left(n + 1\right) \left(4 n^{5} - 80 n^{4} - 64 n^{3} - n^{2} + 23 n + 14\right) \\
    - 4 n^{3} - 6 n - 5 & - 4 n^{4} + 32 n^{3} + 34 n^{2} - n - 4 & \left(n + 1\right) \left(- 4 n^{4} + 148 n^{3} + 130 n^{2} + 49 n + 21\right) \\
    n^{2} - 4 n - 3 & n^{3} - 20 n^{2} - 27 n - 8 & - \left(n + 1\right) \left(- n^{3} + 69 n^{2} + 55 n + 5\right)
\end{matrix}\right]
$}
\end{equation*}

Examining the asymptotics of the gauged companion form:
\[
u_1(n) = n^{10} \left(n!\right)^{4}, \quad
u_2(n) = \left(-16\right)^{n} n^{\frac{31}{4}} e^{4 i \sqrt{n}} \left(n!\right)^{3}, \quad
u_3(n) = \left(-16\right)^{n} n^{\frac{31}{4}} e^{- 4 i \sqrt{n}} \left(n!\right)^{3}
\]

Similarly to the previous example, the primal system has two conjugate recessive solutions, therefore the dual system is expected to oscillate, as seen in \cref{fig:convergence_contrast}.

\begin{figure}[H]
    \centering
    \includegraphics[width=\textwidth]{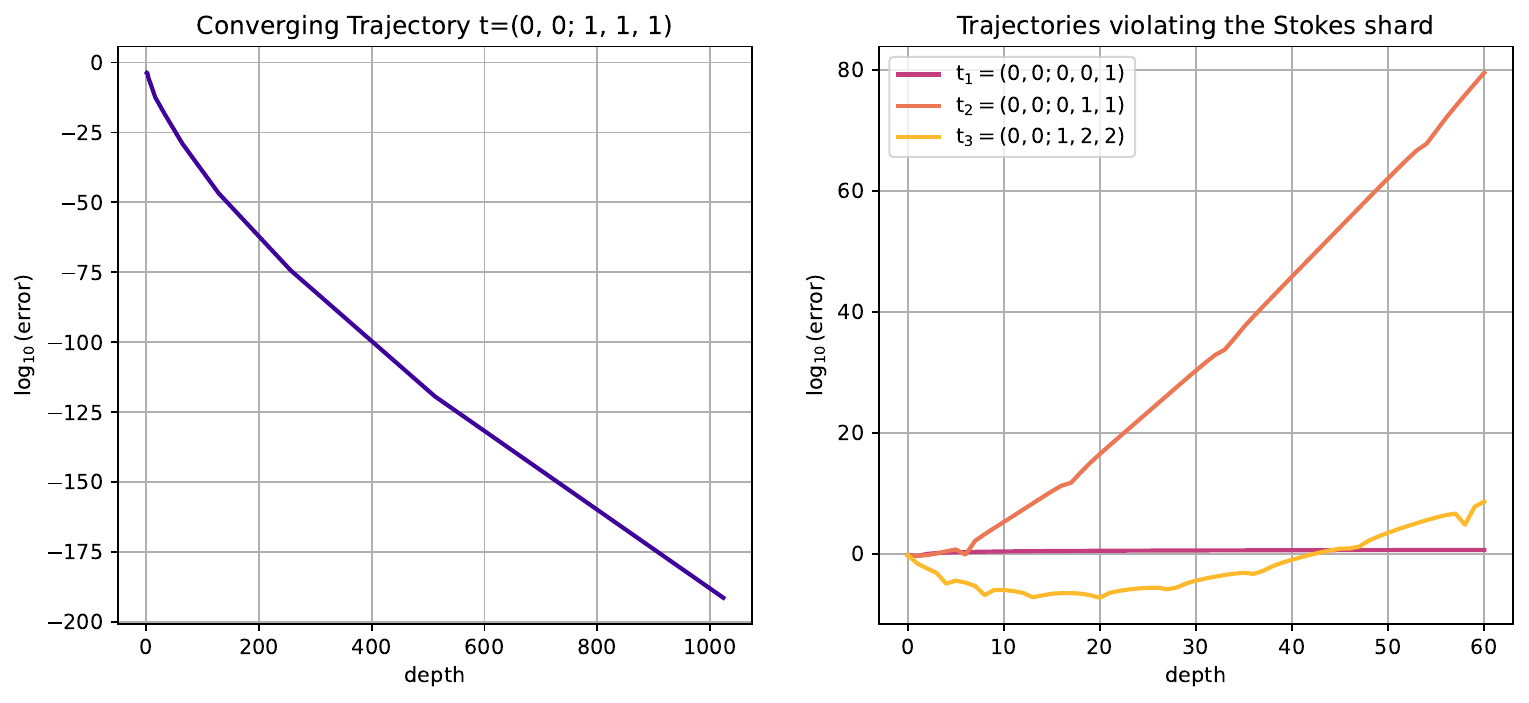}
    \caption[Dichotomy of convergence for $p=2$ sequences.]{
        \textbf{Dichotomy of convergence for $p=2$ sequences.}
        \textbf{(Left)} The converging trajectory $\mathbf{t}=(0, 0; 1, 1, 1)^T$ from example 1, showing logarithmic error decreasing smoothly to $10^{-200}$ at depth $n=1000$.
        \textbf{(Right)} The trajectories violating the convergence criteria: $\mathbf{t}_1=(0, 0; 0, 0, 1)$ converges to a different constant; $\mathbf{t}_2 = (0, 0; 0, 1, 1)$ and $\mathbf{t}_3=(0, 0; 1, 2, 2)$ oscillate and diverge.
    }
    \label{fig:convergence_contrast}
\end{figure}

\subsection{Convergence Rate Comparison}
\label{subsec:examples-comparison}

We now explore trajectories strictly inside the converging family $\mathcal{T}^*_3 \subset \mathbb{Z}^3$ (\cref{lem:recessive-trajectories}).
\Cref{fig:compare} illustrates the convergence rate of these sequences—measured as logarithmic error decay—evaluated both by trajectorial steps (left) and by lattice steps, defined by the $\ell_1$ norm of the trajectory (right).
The color progression highlights the influence of the trajectory parameters on the approximation quality: the convergence rate per step increases with the magnitude of $|a_i|$ (indicated by the lighter curves) and diminishes with larger $|b_j|$ (indicated by the darker curves).

We plot the recurrences by Rivoal \cite{rivoal2009rational} and by Aptekarev et al. recurrence \cite{aptekarev2009linear} (dashed bright lines) as reference points. They are omitted from the normalized plot as neither corresponds to a trajectory in the CMF lattice.
Their approximations exhibit a convergence rate similar to that of existing trajectories.

Following \cite{elimelech2024algorithm,weinbaum2025conservative}, we define the numerical approximation of the irrationality measure at depth $n$ as
$$\delta = -1 - \log_{\left|q_n\right|}\left(\left|\frac{p_n}{q_n}-\gamma\right|\right),$$
where $\frac{p_n}{q_n}$ is the corresponding rational approximation.
Regardless of the chosen trajectory, empirical evaluations of $\delta$ consistently converge to $-1$.
As this represents the theoretical lower bound for $\delta$, the sequences generated by this framework do not directly contribute to an irrationality proof for $\gamma$.

\begin{figure}[H] % h = here, t = top, b = bottom, p = page
    \centering
    \includegraphics[width=1\textwidth]{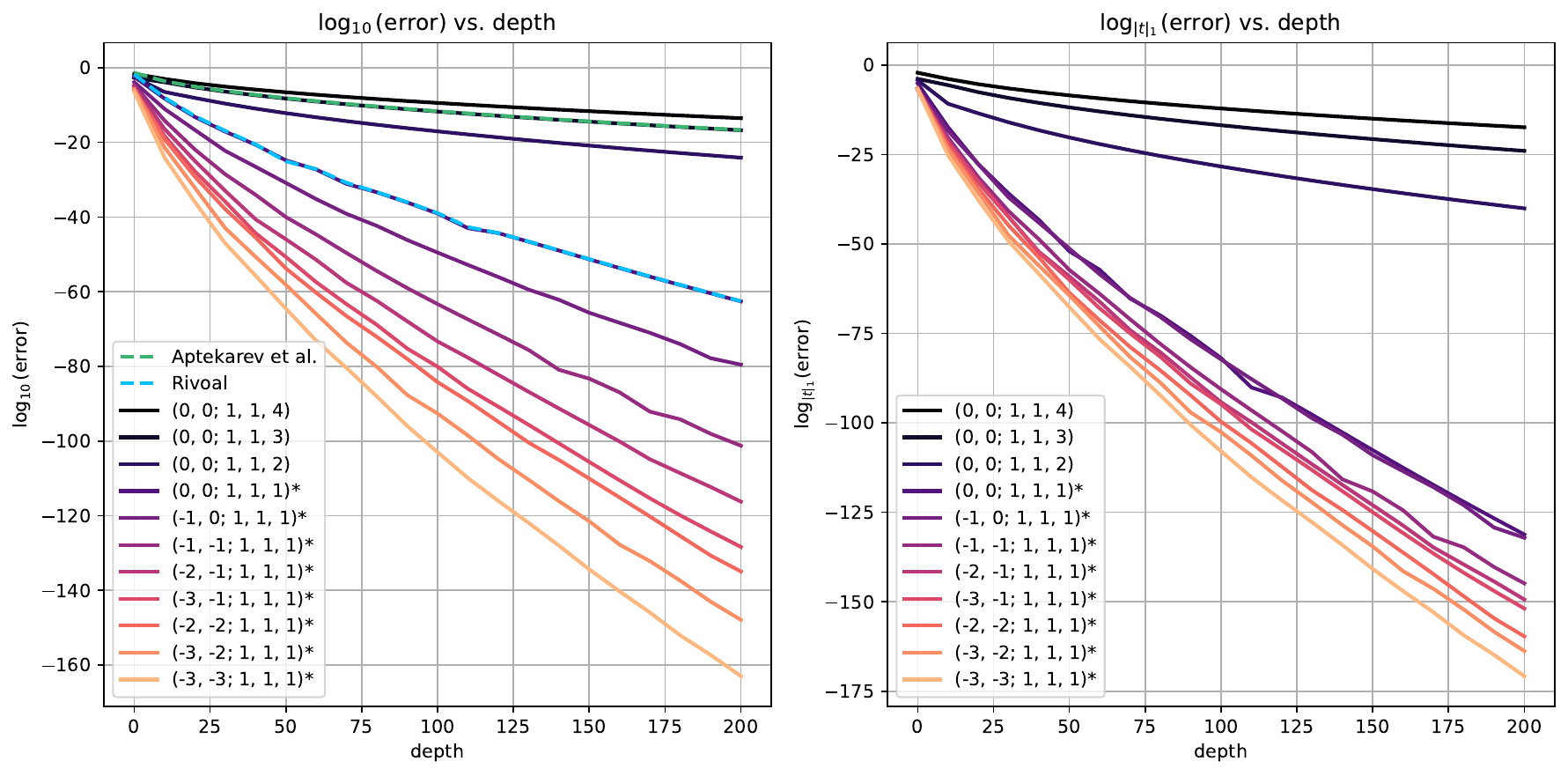}
    \caption{
    \textbf{A selection of formulas for $\gamma$ derived from trajectories in $\mathcal{T}_3$, compared by convergence rate}. Trajectories in $\mathcal{T}^*_3$, proven to converge, are marked by an asterisk.
    \textbf{(Left)} The formulas' convergence rate, presenting the number of digits
    $\log_{10}(\textup{err})$ as a function of depth. The recurrences by Rivoal and by Aptekarev et al. are highlighted as references. \textbf{(Right)} The formulas' normalized convergence rate, to reflect the complexity of each step in the trajectory.
    $\log_{|\mathbf{t}|_1}(\textup{err})$ as a function of depth.
    }
    \label{fig:compare}
\end{figure}

\vspace{-8pt}
\subsection{A connection to Rivoal's recurrence}
\label{subsec:rivoal-equivalence}

The numerical comparison in \cref{subsec:examples-comparison} suggests that the $\mathbf{t} = (0, 0; 1, 1, 1)^T$ trajectory and Rivoal's recurrence are related due to their similar convergence rate. 
Indeed, they are connected by a gauge transform up to a scalar factor.
Denoting our construction (from \cref{subsec:full-algorithm})
\[
\mathcal{M}_{\mathbf{0}, \mathbf{t}}(n)^{-T} = \frac{1}{(n+1)^4}\left[\begin{matrix}
6 n^{2} + 13 n + 10 & - 8 n - 5 & 3 \\
3 n^{3} + 3 n^{2} - 3 n - 6 & - 3 n^{2} + 3 n + 3 & n - 2 \\
n^{4} - 2 n^{3} - 6 n^{2} - 3 n + 3 & 6 n^{2} + 2 n - 1 & 1 - 2 n
\end{matrix}\right],
\]

Rivoal's recurrence \cite{rivoal2009rational} is:
\begin{align*}
0 &= \left(n + 2\right)^{2} \left(8 n + 19\right) \left(8 n + 27\right) p{\left(n - 3 \right)} 
  - \left(8 n + 27\right) \left(24 n^{3} + 105 n^{2} + 124 n + 25\right) p{\left(n - 2 \right)} \\
  & + \left(n + 3\right) \left(8 n + 11\right) \left(24 n^{2} + 145 n + 215\right) p{\left(n - 1 \right)} - \left(n + 3\right)^{2} \left(8 n + 11\right) \left(8 n + 19\right) p{\left(n \right)}
\end{align*}
which translates to the companion form:
\[
C(n)=
\begin{bmatrix}0 & 0 & \frac{\left(n + 2\right)^{2} \left(8 n + 27\right)}{\left(n + 3\right)^{2} \left(8 n + 11\right)}\\1 & 0 & - \frac{\left(8 n + 27\right) \left(24 n^{3} + 105 n^{2} + 124 n + 25\right)}{\left(n + 3\right)^{2} \left(8 n + 11\right) \left(8 n + 19\right)}\\0 & 1 & \frac{24 n^{2} + 145 n + 215}{\left(n + 3\right) \left(8 n + 19\right)}
\end{bmatrix}.
\]

Let 
\[
G(n) =
\begin{bmatrix}0 & \frac{8 n + 11}{\left(n + 1\right)^{2}} & \frac{24 n^{2} + 97 n + 94}{\left(n + 1\right)^{2} \left(n + 2\right)}\\-1 & \frac{3 n^{2} - n - 7}{\left(n + 1\right)^{2}} & \frac{15 n^{4} + 69 n^{3} + 66 n^{2} - 78 n - 111}{\left(n + 1\right)^{2} \left(n + 2\right)^{2}}\\2 & \frac{- 6 n^{2} - 6 n + 3}{\left(n + 1\right)^{2}} & \frac{8 n^{5} + 21 n^{4} - 38 n^{3} - 131 n^{2} - 48 n + 53}{\left(n + 1\right)^{2} \left(n + 2\right)^{2}}
\end{bmatrix},
\]
then
\[
C = (n+1)^2\cdot G^{-1}(n)\cdot\mathcal{M}_{\mathbf{0}, \mathbf{t}}(n)^{-T}\cdot G(n+1).
\]

We thank Pilar Stolarczyk for observing a similar connection between Rivoal's recurrence and a modified version of our construction, as part of a submission to the Ramanujan Challenge for AI \cite{shalyt2026ramanujan}.

\section{Conclusion}

The framework developed in this work conjectures that for $r \ge 3$, there exists two initial value vectors $\mathbf{p}_r,\mathbf{q}_r \in \mathbb{Z}^r$ and an infinite set of trajectories $\mathcal{T}_r\subset \mathbb{Z}^{2r-1}$ such that for any $\mathbf{t} \in \mathcal{T}^*_r$ the dual CMF limit converges to Euler's constant:
\[
\mathcal{D}_{\mathbf{0}, \mathbf{t}}^{\mathcal{M}}(\mathbf{p}_r,\mathbf{q}_r,\mathbf{v}) =
\lim_{n\to\infty} \frac
{\mathbf{p}_r^T \cdot \Phi_{\mathcal{M}_{\mathbf{0}, \mathbf{t}}}(n)^{-T}\cdot \mathbf{v}}
{\mathbf{q}_r^T\cdot\Phi_{\mathcal{M}_{\mathbf{0}, \mathbf{t}}}(n)^{-T}\cdot \mathbf{v}}
= \gamma.
\]

This result is a composition of two mathematical mechanisms:
\begin{enumerate}
    \item \textbf{The Algebraic Component:} 
    The initial value vectors $\mathbf{p}_r$ and $\mathbf{q}_r$ are derived from the Meijer G-function identity (\cref{prop:g-function-identity}) to satisfy the exact evaluation, proven for any $r\ge3$:
    \[\frac{\mathbf{p}_r^T\mathcal{S}(0)}{\mathbf{q}_r^T\mathcal{S}(0)}=\gamma.\]
    \item \textbf{The Dynamical Component:}
    For any trajectory $\mathbf{t}$ that satisfies the dual convergence criteria (\cref{lem:convergence-criteria}) and generic vector $\mathbf{v}$, the dual system of the trajectory matrix isolates the initial state vector $\mathcal{S}(\mathbf{0})$ up to scaling (\cref{lem:recessive-trajectories}). That is, there exists a sequence $c_n\in\mathbb{R}\setminus\{0\}$ such that
    \[
    \lim_{n\to\infty}
    c_n\Phi_{\mathcal{M}_{\mathbf{0}, \mathbf{t}}}(n)^{-T}\mathbf{v}
    =
    \mathcal{S}(\mathbf{0}).
    \]
    We have proven that trajectories in $\mathcal{T}^*_3$ satisfy the dual convergence criteria, and it is conjectured that trajectories in $\mathcal{T}_r$ also satisfy it.
\end{enumerate}

\subsection{The Discovery Process}
The results presented in this paper did not emerge from a grand initial design, but rather from numerical experimentation.
While exploring the behavior of the $_2F_2$ hypergeometric function within the framework of Conservative Matrix Fields, we observed a surprising phenomenon:
the spontaneous emergence of Euler's constant $\gamma$. 

Investigating the source of this coincidence revealed that the phenomenon could be understood more naturally through the Meijer G-function, of which $_pF_q$ is a special case.
This shift in perspective suggested that the G-function itself provides the structural backbone for the CMF construction.
Following this lead, we utilized the PSLQ integer relation algorithm~\cite{ferguson1999analysis} to conduct a broad numerical search for linear combinations of G-function evaluations that sum to $\gamma$.
The patterns revealed by these experiments ultimately allowed us to reverse-engineer and rigorously establish \cref{prop:g-function-identity}.

\subsection{Future Work}

This work lays the foundation for a new perspective on constructing Apéry-like sequences for Euler's constant via symbolic identities and their associated Conservative Matrix Fields. Several promising directions for future research may include:

\begin{enumerate}
\item \textbf{\boldmath Extension to \(\gamma+\log(z)\).}
During our research, we also observed that the same construction appears to
approximate values of the form \(\gamma+\log(z)\) by evaluating the CMF at different values of $z$.
This suggests further connections to Rivoal's work \cite{rivoal2009rational} beyond the connection shown in \cref{subsec:rivoal-equivalence}.
A natural future direction is to generalize the Meijer \(G\)-function identity accordingly and clarify whether Rivoal's recurrences arise as a special case of the CMF framework.

\item \textbf{Beyond the Meijer G-function} \\
Building on our method of deriving infinite families of holonomic sequences converging to $\gamma$ via the Meijer G-function, a question that arises naturally is: can alternative parameterizations of the G-function also be used to construct holonomic sequences in a similar way? Moreover, could D-finite functions other than the Generalized Hypergeometric Function and the Meijer G-function be used for this purpose? Such investigations may yield novel sequences with potentially improved irrationality measures or uncover new symbolic approximations to constants beyond $\gamma$.
    
\item \textbf{Unifying Holonomic Sequences via CMFs} \\
Alongside the framework developed in this work for Euler's constant $\gamma$, recent work on $\pi$ \cite{raz2025euler} demonstrates the potential of interpreting known holonomic sequences as different trajectories within a CMF. This raises a broader open question: \emph{Can all Apéry-like approximations to mathematical constants be geometrically embedded as specific discrete trajectories within a D-finite Conservative Matrix Field?}

Addressing this could elevate isolated recurrence constructions into a unified symbolic framework, where the target constants emerge naturally from the dual convergence limits of the underlying CMF.

\item \textbf{Toward a Comprehensive Theory of Mathematical Constants} \\
As demonstrated in this work, CMFs encode entire \emph{families of holonomic sequences} to mathematical constants. While some theoretical foundations have been laid \cite{weinbaum2025conservative}, many structural properties remain supported only by numerical evidence. Developing a deeper theory of CMFs could help focus the search for new irrationality proofs that may be hiding within them.

\section*{Declaration of AI use}
During the development of the proof of \Cref{lem:asymptotic-extraction}, the first author used GPT-5.6 sol to study mathematical techniques outside his prior expertise. This interaction helped develop a variant of the proof, which was then written by the authors in \Cref{proof:asymptotic-extraction}.

\end{enumerate}

% \bibliographystyle{amsalpha}   % or amsplain
% \bibliography{library.bib}  
\printbibliography

\appendix

\section{Proof of~\texorpdfstring{\Cref{lem:asymptotic-extraction}}{Lemma~\ref{lem:asymptotic-extraction}}}
\label{proof:asymptotic-extraction}

We use the standard method of steepest descent, deforming the integration
contour through a simple saddle point along a path of constant phase
\cites[Section 2.4(iv)]{NIST:DLMF}[Section 4.10]{olver1997asymptotics}[Section 1.1]{paris2001asymptotics}.
For Mellin--Barnes integrals, the deformation must additionally respect the
Gamma-function pole families and produce no boundary contribution at infinity
\cites[Section 16.17]{NIST:DLMF}[Section 3.4]{paris2001asymptotics}.
For analogous saddle-point deformations of Mellin--Barnes loop integrals with
Gamma factors and large parameters, see
\cite[Sections 5.3.2--5.3.3]{paris2001asymptotics}.

In our setting, the phase and amplitude retain a dependence on the asymptotic parameter even after normalization, so the classical fixed-function formulation does not by itself provide the required uniform control.
We therefore use the parameter-uniform version of Laplace's method due to
Neuschel \cite[Theorem~3.1]{neuschel2012uniform} which gives a uniform asymptotic expansion
for a fixed contour when the phase and amplitude vary continuously with an
auxiliary parameter in a compact set:

\begin{theorem}
\label{thm:neuschel}
Let \(G\subset\mathbb C\) be a domain and \(S\subset\mathbb C\) a nonempty
compact set equipped with the subspace topology.
Let \(\mathcal P\) be a piecewise continuously differentiable non-closed
simple curve with finite starting point \(a\in G\) and finite or infinite
endpoint \(b\), such that \([a,b)_{\mathcal P}\subset G\).
Let
\[
p,q:G\times S\longrightarrow\mathbb C
\]
be continuous functions such that
\(
p_z:=p(\,\cdot\,,z)
\)
and
\(
q_z:=q(\,\cdot\,,z)
\)
are holomorphic on \(G\) for every \(z\in S\).

Suppose that, in a neighborhood of \(a\), they admit the convergent
expansions
\[
p_z(t)
=
p_z(a)
+
\sum_{\nu=0}^{\infty}
b_\nu(z)(t-a)^{\mu+\nu},
\qquad
\mu\in\mathbb N,
\qquad
b_0(z)\neq0,
\]
and
\[
q_z(t)
=
\sum_{\nu=0}^{\infty}
c_\nu(z)(t-a)^{\nu+\lambda-1},
\qquad
\lambda\in\mathbb N,
\qquad
c_0(z)\neq0.
\]

Assume further that, for every \(t\in(a,b)_{\mathcal P}\),
\[
\inf_{z\in S}
\Re\!\left(p_z(t)-p_z(a)\right)>0,
\]
and that this expression has a positive lower bound as \(t\to b\) along
\(\mathcal P\).

Finally, suppose that there exists \(N\in\mathbb N\) such that
\[
I(n;z)
:=
\int_{\mathcal P}
e^{-n p_z(t)}q_z(t)\,dt
\]
converges absolutely for all \(n\ge N\) and \(z\in S\), and that
\[
\int_{\mathcal P}
e^{-N\Re(p_z(t))}
|q_z(t)|\,|dt|
\]
is bounded as a function of \(z\in S\).

Then, as \(n\to\infty\),
\[
I(n;z)
\sim
e^{-n p_z(a)}
\sum_{\nu=0}^{\infty}
\Gamma\!\left(\frac{\nu+\lambda}{\mu}\right)
\frac{a_\nu(z)}{n^{(\nu+\lambda)/\mu}},
\]
uniformly with respect to \(z\in S\). The first coefficients are
\[
a_0(z)
=
\frac{c_0(z)}
{\mu\bigl(b_0(z)\bigr)^{\lambda/\mu}},
\]
and
\[
a_1(z)
=
\left(
\frac{c_1(z)}{\mu}
-
\frac{(\lambda+1)b_1(z)c_0(z)}
{\mu^2b_0(z)}
\right)
\frac{1}
{\bigl(b_0(z)\bigr)^{(\lambda+1)/\mu}}.
\]
The branches of powers with base \(b_0(z)\) are chosen as described in the first part of the proof \cite[Theorem~3.1]{neuschel2012uniform}.
\end{theorem}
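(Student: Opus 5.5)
We would prove the theorem by the classical Laplace/Watson scheme, keeping every constant uniform in \(z\in S\) through compactness and continuity. Writing \(I(n;z)=e^{-np_z(a)}\int_{\mathcal P}e^{-n(p_z(t)-p_z(a))}q_z(t)\,dt\), it suffices to treat the normalized integral. We may translate so that \(a=0\). We then fix a small \(\delta>0\), to be chosen independently of \(z\), and split \(\mathcal P\) at the point \(t_\delta\) at arclength \(\delta\) from \(a\). This gives a local piece \([a,t_\delta]_{\mathcal P}\) and a tail \([t_\delta,b)_{\mathcal P}\).

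\textbf{Uniform preliminaries.} Since \(p,q\) are jointly continuous and holomorphic in \(t\), Cauchy's estimates on a fixed disc around \(a\) show that the coefficients \(b_\nu(z),c_\nu(z)\) are continuous in \(z\) and satisfy bounds \(|b_\nu(z)|,|c_\nu(z)|\le CR^{-\nu}\) uniformly on \(S\). Because \(b_0\) is continuous and nonvanishing on the compact set \(S\), we have \(|b_0(z)|\ge\beta>0\). Next, the function \(m(t):=\inf_{z\in S}\Re(p_z(t)-p_z(a))\) is continuous in \(t\), being an infimum over a compact set of a jointly continuous function. By hypothesis it is positive on \((a,b)_{\mathcal P}\) and bounded below near \(b\). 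Hence \(m(t)\ge\eta(\delta)>0\) on \([t_\delta,b)_{\mathcal P}\).

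\textbf{Tail estimate.} For \(n\ge N\) we write \(e^{-n\Re(p_z-p_z(a))}=e^{-(n-N)\Re(p_z-p_z(a))}e^{-N\Re(p_z-p_z(a))}\). The first factor is at most \(e^{-(n-N)\eta}\) on the tail. The remaining integral is bounded uniformly in \(z\) by the last hypothesis, since \(|e^{Np_z(a)}|\) is also bounded by continuity on the compact set \(S\). So the tail contributes \(O(e^{-n\eta})\) uniformly in \(z\). This is exponentially small relative to every term \(n^{-(\nu+\lambda)/\mu}\) of the claimed expansion.

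\textbf{Local piece.} We write \(p_z(t)-p_z(a)=t^\mu b_0(z)\,h_z(t)\) with \(h_z(0)=1\), and set \(w=t\,(b_0(z)h_z(t))^{1/\mu}\), using the branch of \(b_0^{1/\mu}\) that makes \(\Re w^\mu>0\) along the initial direction of \(\mathcal P\). By the uniform coefficient bounds and the quantitative inverse function theorem (Rouché), the map \(t\mapsto w\) is biholomorphic on a disc of radius \(\delta_0\) independent of \(z\). Its inverse \(t=\sum_{k\ge1}d_k(z)w^k\), with \(d_1=b_0^{-1/\mu}\), has uniformly bounded Taylor coefficients. Then \(q_z(t)\,dt/dw=\sum_{\nu\ge0}A_\nu(z)w^{\nu+\lambda-1}\), again with uniform bounds.

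The image of \([a,t_\delta]_{\mathcal P}\) is a curve from \(0\) to \(w_\delta\) with \(\Re w_\delta^\mu\ge\eta\). We deform it, inside the disc where the integrand is holomorphic, to the straight segment from \(0\) to \(w_\delta\), followed if needed by a short arc on which \(\Re w^\mu\ge\eta/2\). Along the segment, \(w^\mu\) runs over \([0,w_\delta^\mu]\), so we substitute \(u=w^\mu\). Truncating the series at order \(K\) and applying Watson's lemma term by term, each term gives \(\Gamma((\nu+\lambda)/\mu)\,a_\nu(z)\,n^{-(\nu+\lambda)/\mu}\) with \(a_\nu=A_\nu/\mu\), up to exponentially small errors. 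The truncation remainder is \(O(n^{-(K+\lambda+1)/\mu})\), with a constant uniform in \(z\) because the coefficient bounds are uniform. Finally, Lagrange inversion applied to the first two coefficients of \(t(w)\) gives \(A_0=c_0b_0^{-\lambda/\mu}\) and the stated formula for \(a_1\).

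The step we expect to be the main obstacle is the local change of variables made uniformly in \(z\). One needs a single radius \(\delta_0\), a continuous choice of branch of \(b_0(z)^{1/\mu}\), and a single deformation of the image path to a ray on which \(\Re w^\mu\) is increasing, all valid simultaneously for every \(z\in S\). Here we would lean on compactness of \(S\) together with the lower bound \(m(t)\ge\eta\) near \(t_\delta\). This bound guarantees that the endpoint \(w_\delta\) lies in the sector \(\Re w^\mu>0\) uniformly, so the deformation never crosses a region where the exponential grows.
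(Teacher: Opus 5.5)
The paper gives no proof of this theorem; it quotes it from Neuschel's Theorem~3.1. Your outline is the standard Olver-type Laplace-method argument that Neuschel makes uniform, and it is correct: a tail bound from the $N$-hypothesis together with $m(t)\ge\eta$ on $[t_\delta,b)_{\mathcal P}$, a uniform local inversion $p_z(t)-p_z(a)=w^\mu$ from Cauchy estimates and $|b_0|\ge\beta$, deformation to the segment $[0,w_\delta]$, Watson's lemma, and Lagrange inversion for $a_0,a_1$.

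The one point to state precisely is the branch. $\Re w^\mu=\Re(p_z(t)-p_z(a))$ does not depend on the choice of $b_0(z)^{1/\mu}$, so you should instead pick the branch that places the image of the initial arc in the sector $|\arg w|<\pi/(2\mu)$. That choice is what makes $a_\nu=A_\nu/\mu$ hold literally and fixes the branches in $a_0,a_1$. Continuity of this branch in $z$ is not actually needed, since only $|b_0(z)|$ enters your uniform bounds.
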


\begin{proof}[Proof of \cref{lem:asymptotic-extraction}]

The proof has three main steps. We first identify the exact saddle points and
construct fixed limiting steepest-descent contours around their scaled limits.
We then pull these contours back to the original \(x\)-plane, obtaining
\(n\)-dependent Mellin--Barnes contours that pass through the exact saddles.
Finally, we apply the inverse normalization, which sends these contours back
to fixed paths with the saddle at the origin, and use the uniform
Laplace method stated in \cref{thm:neuschel} to extract the asymptotic growth of the three solutions.

\textbf{Step 1: Saddle points and limiting contours.}

We bring the integral representations of
$\mathbf I_1,\mathbf I_2,\mathbf I_3$ into the form
required by \cref{thm:neuschel} by identifying
the corresponding saddle points and integration contours.

Fix \(j\in\{1,2,3\}\), and write
\(\tau_1=2\pi i\), \(\tau_2=-2\pi i\), and \(\tau_3=0\).

On a simply connected pole-free domain, choose
branches of the logarithms and write
\(K_j(n,x)=e^{\Psi_j(x,n)}\), where
\[
\Psi_j(x,n)
=
3\log\Gamma(x)
+
\sum_{i=1}^2
\log\Gamma(\alpha_i n-x+1)
+
\tau_jx.
\]
After scaling \(x=n^{2/3}s\), the saddle equation
\(\partial_x\Psi_j(x,n)=0\) becomes
\[
F_{j,n}(s)
:=
3\psi(n^{2/3}s)
-
\sum_{i=1}^2
\psi(\alpha_i n-n^{2/3}s+1)
+
\tau_j
=
0.
\]
By \cref{lem:gamma-asymptotics}, as \(n\to\infty\),
with \(\alpha_1,\alpha_2\) fixed, uniformly for \(s\)
in compact subsets of the principal slit plane,
\[
F_{j,n}(s)
=
3\log s
-
\log(\alpha_1\alpha_2)
+
\tau_j
+
\mathcal O(n^{-1/3}).
\]
The limit as \(n\to\infty\) is therefore
\[
F_j(s)
:=
\lim_{n\to\infty}F_{j,n}(s)
=
3\log s-\log(\alpha_1\alpha_2)+\tau_j.
\]
It has a simple zero at \(c\rho_j\), since
\[
F_j(c\rho_j)=0,
\qquad
F_j'(c\rho_j)=\frac{3}{c\rho_j}\neq0.
\]

Choose a sufficiently small disk \(D_j\) centered at \(c\rho_j\), contained
in the principal slit plane, such that \(F_j\) has no other zero in
\(\overline{D_j}\). Since \(F_j\) is bounded away from zero on
\(\partial D_j\), the uniform estimate above gives, for all sufficiently
large \(n\),
\[
|F_{j,n}(s)-F_j(s)|<|F_j(s)|,
\qquad s\in\partial D_j.
\]
Rouch\'e's theorem therefore gives a unique simple zero
\(\sigma_{j,n}\in D_j\) of \(F_{j,n}\).
Since \(F_j'(c\rho_j)\neq0\), we have
\(|F_j(s)|\asymp|s-c\rho_j|\) near \(c\rho_j\).
Using \(F_{j,n}(\sigma_{j,n})=0\) and the uniform estimate above,
\[
\sigma_{j,n}
=
c\rho_j+\mathcal O(n^{-1/3}).
\]
Consequently, for all sufficiently large \(n\),
the phase \(\Psi_j(x,n)\) has exactly one saddle point
in \(n^{2/3}D_j\), located at
\[
x_{j,n}
=
n^{2/3}\sigma_{j,n}
=
c\rho_j n^{2/3}
+
\mathcal O(n^{1/3}).
\]

The limiting saddle equation is the derivative of the phase
\[
\phi_j(s)
=
s\left(
3\log\frac{s}{c}+\tau_j-3
\right),
\qquad
\phi_j'(s)=F_j(s),
\]
defined on the principal slit plane
\(\Omega=\mathbb C\setminus(-\infty,0]\).
Thus \(c\rho_j\) is the unique simple saddle of \(\phi_j\) in \(\Omega\).
Let \(\Gamma_j\) be the union of the two maximal
steepest-descent trajectories whose closures contain
the saddle \(c\rho_j\).
Away from the saddle, these trajectories follow
the negative gradient flow
\[
\frac{ds}{dt}=-\overline{\phi_j'(s)},
\]
along which
\[
\frac{d}{dt}\phi_j(s(t))
=
-|\phi_j'(s(t))|^2.
\]
Consequently, along either branch,
\(\operatorname{Im}(\phi_j)\) is constant and
\(\operatorname{Re}(\phi_j)\) strictly decreases
away from the saddle.

Neither branch can terminate at the origin. For \(j=1,2\),
\[
\operatorname{Im}(\phi_j(c\rho_j))
=
(-1)^{j+1}\frac{3\sqrt3}{2}c\neq0,
\]
whereas \(\phi_j(s)\to0\) as \(s\to0\). For \(j=3\),
\(\phi_3(c)=-3c\), while \(\operatorname{Re}(\phi_3)\) decreases along the
branch and tends to \(0\) at the origin. Nor can a branch meet the
logarithmic cut, since for \(r>0\),
\[
\phi_j(-r\pm i0)
=
3r\left(1+\log\frac{c}{r}\right)
-r(\tau_j\pm3\pi i).
\]
For \(j=3\), the imaginary part cannot equal the saddle phase. For
\(j=1,2\), equality is possible on only one bank and forces
\(r=qc\), where
\(q=3\sqrt3/(2\pi)\in(1/2,1)\); there,
\[
\operatorname{Re}\left(\phi_j(-qc\pm i0)\right)
=
3cq(1-\log q)
>
\frac{3c}{2}
=
\operatorname{Re}\left(\phi_j(c\rho_j)\right),
\]
contradicting descent.
Since there are no other critical points in \(\Omega\), by standard continuation of the gradient-flow trajectories, both maximal branches extend to infinity.

To determine their directions, write \(s=re^{i\theta}\), with
\(-\pi<\theta<\pi\). The constant-phase condition along \(\Gamma_j\) gives
\[
\frac{\operatorname{Im}(\phi_j(c\rho_j))}{r}
=
\left(3\log\frac{r}{c}-3\right)\sin\theta
+
\left(3\theta+\operatorname{Im}(\tau_j)\right)\cos\theta.
\]
As \(r\to\infty\), this implies
\(\sin\theta=\mathcal O((\log r)^{-1})\), so every end approaches either
the positive or the negative real direction. The positive direction is
impossible because there
\(\operatorname{Re}(\phi_j(s))\to+\infty\), whereas
\(\operatorname{Re}(\phi_j)\) decreases along a descent branch.
Hence both ends approach the negative real direction.

For sufficiently large \(r\), the level equation has one solution near each
bank of the negative real axis. Writing
\(\theta=\pi-\varepsilon\) on the upper bank and
\(\theta=-\pi+\varepsilon\) on the lower bank gives
\[
\varepsilon
=
\frac{3\pi+\operatorname{Im}(\tau_j)}
{3\log(r/c)}
+
\mathcal O((\log r)^{-2}),
\qquad
\varepsilon
=
\frac{3\pi-\operatorname{Im}(\tau_j)}
{3\log(r/c)}
+
\mathcal O((\log r)^{-2}),
\]
respectively. Since \(3\pi\pm\operatorname{Im}(\tau_j)>0\), the two ends
approach the negative real direction from opposite sides, and
\[
|\operatorname{Im}(s)|
=
r|\sin\theta|
\asymp
\frac{r}{\log r}
\longrightarrow\infty.
\]
Thus the tails of \(\Gamma_j\) move arbitrarily far from the negative real
axis. Since the remaining compact part does not meet the cut,
\[
\operatorname{dist}\!\left(\Gamma_j,(-\infty,0]\right)>0.
\]

\textbf{Step 2: Mellin--Barnes contours.}

We now pull the limiting contour back to the original \(x\)-plane. Define
\[
C_{j,n}
=
n^{2/3}\left(
\Gamma_j+\sigma_{j,n}-c\rho_j
\right).
\]
Since \(\sigma_{j,n}-c\rho_j=\mathcal O(n^{-1/3})\) and
\(\operatorname{dist}(\Gamma_j,(-\infty,0])>0\), the translated contour
\(\Gamma_j+\sigma_{j,n}-c\rho_j\) remains disjoint from the negative real
axis for all sufficiently large \(n\). Hence \(C_{j,n}\) avoids the left
pole family \(0,-1,-2,\ldots\). The right poles are
\[
x=\alpha_i n+1+k,
\qquad i=1,2,\quad k\ge0.
\]
After scaling by \(n^{-2/3}\), these lie at
\[
\alpha_i n^{1/3}+(1+k)n^{-2/3},
\]
while \(\Gamma_j\), and hence its small translate, is bounded to the right.
Thus all right poles lie to the right of \(C_{j,n}\) for sufficiently large
\(n\). Moreover, by Step~1, the two ends of \(C_{j,n}\) tend to negative
infinity from opposite sides of the real axis. Orienting \(C_{j,n}\)
consistently with \(L'\), it therefore has the prescribed right-loop topology and
separates the two pole families.

It remains to check convergence at infinity. By the reflection formula,
\[
K_j(n,x)
=
\frac{
\pi^3
\Gamma(\alpha_1n-x+1)
\Gamma(\alpha_2n-x+1)
}{
\sin^3(\pi x)\Gamma(1-x)^3
}
e^{\tau_jx}.
\]
Along either tail of \(C_{j,n}\), the quantities
\(1-x\) and \(\alpha_i n-x+1\) tend to infinity in a fixed sector about the
positive real axis. Hence \cref{lem:gamma-asymptotics} gives
\[
\log\left|
\frac{
\Gamma(\alpha_1n-x+1)
\Gamma(\alpha_2n-x+1)
}{
\Gamma(1-x)^3
}
\right|
=
-\operatorname{Re}\!\left((1-x)\log(1-x)-(1-x)\right)
+\mathcal O_n(\log|x|),
\]
which tends to \(-\infty\) like \(-|x|\log|x|\). The remaining factor also
decays exponentially: on the upper tail,
\[
\left|
\frac{e^{\tau_jx}}{\sin^3(\pi x)}
\right|
=
\exp\left(
-\bigl(3\pi+\operatorname{Im}(\tau_j)\bigr)\operatorname{Im}(x)
+\mathcal O(1)
\right),
\]
while on the lower tail it is
\[
\exp\left(
-\bigl(3\pi-\operatorname{Im}(\tau_j)\bigr)|\operatorname{Im}(x)|
+\mathcal O(1)
\right).
\]
Since \(3\pi\pm\operatorname{Im}(\tau_j)>0\), the integrals over \(C_{j,n}\)
converge absolutely; the polynomial factors
\(1,n-x,(n-x)^2\) do not affect this conclusion.

Thus \(C_{j,n}\) separates the two pole families, has the prescribed
right-loop behavior at infinity, and the corresponding integrals converge
absolutely. Hence \(C_{j,n}\) satisfies the contour conditions of
\cref{def:meijer-g} and is an admissible choice for \(L'\).
In particular, we may evaluate \(\mathbf I_j(n)\) on \(C_{j,n}\), whose defining advantage is that it passes through the exact saddle \(x=n^{2/3}\sigma_{j,n}\).

\textbf{Step 3: Uniform saddle asymptotics.}

We now normalize the contours back to fixed paths. Set
\[
P_j:=\Gamma_j-c\rho_j,
\qquad
x=n^{2/3}(\sigma_{j,n}+t).
\]
By construction, this maps \(P_j\) onto \(C_{j,n}\), with the exact saddle
corresponding to \(t=0\). Define
\[
\Phi_{j,n}(t)
:=
n^{-2/3}\left(
\Psi_j(n^{2/3}(\sigma_{j,n}+t),n)
-
\Psi_j(n^{2/3}\sigma_{j,n},n)
\right)
\]
and
\[
q_{j,n}(t)
:=
1-n^{-1/3}(\sigma_{j,n}+t).
\]
Since \(n-x=nq_{j,n}(t)\),
\[
\mathbf I_j(n)^{T}
=
\frac{n^{2/3}}{2\pi i}
e^{\Psi_j(n^{2/3}\sigma_{j,n},n)}
\int_{P_j}
e^{n^{2/3}\Phi_{j,n}(t)}
\left(
1,\,
nq_{j,n}(t),\,
n^2q_{j,n}(t)^2
\right)\,dt.
\]

We apply \cref{thm:neuschel} componentwise to the two halves of
\(P_j\) meeting at \(t=0\). Let
\[
S_N=\{0\}\cup\{n^{-1/3}:n\ge N\}.
\]
For each \(n\ge N\), let \(z_n=n^{-1/3}\) and set
\[
p_{z_n}(t):=-\Phi_{j,n}(t),
\qquad
q_{z_n}(t):=q_{j,n}(t).
\]
At \(z=0\) set
\[
p_0(t)
=
\phi_j(c\rho_j)-\phi_j(c\rho_j+t),
\qquad
q_0(t)=1.
\]
By \(\sigma_{j,n}=c\rho_j+\mathcal O(n^{-1/3})\) and
\cref{lem:gamma-asymptotics}, these functions converge locally uniformly,
together with the required derivatives, as \(z\to0\).

Step~1 gives
\[
\operatorname{dist}\!\left(
P_j,-c\rho_j+(-\infty,0]
\right)>0.
\]
Thus the normalized left poles remain uniformly separated from \(P_j\), whereas the normalized right poles escape to \(+\infty\).
After increasing \(N\) if necessary, the phases and amplitudes are therefore holomorphic on a common simply connected neighborhood \(G_j\) of \(P_j\) for all
\(z\in S_N\).

At the saddle, \(p_z(0)=p_z'(0)=0\), while for \(z=z_n=n^{-1/3}\),
\[
p_z''(0)
=
-n^{2/3}\Psi_j''(n^{2/3}\sigma_{j,n},n)
\longrightarrow
-\frac{3}{c\rho_j}\neq0, \quad
q_z(0)\to1
\]
Hence the local expansion in Neuschel's theorem has \(\mu=2\) and \(\lambda=1\). 
Moreover, \(\operatorname{Re}(p_0(t))>0\) for \(t\in P_j\setminus\{0\}\), since \(P_j\) is the translated steepest-descent contour of \(\phi_j\).
Since \(P_j\) leaves the origin in the steepest-descent directions and \(p_z''(0)\to-3/(c\rho_j)\neq0\), after increasing \(N\) if necessary there exist \(a,\delta>0\) such that
\[
\operatorname{Re}(p_z(t))
\ge
a|t|^2,
\qquad
0<|t|\le\delta,\quad t\in P_j,
\]
uniformly for \(z\in S_N\).
On every compact subarc \(\delta\le|t|\le R\), local uniform convergence to \(p_0\) and the strict positivity of \(\operatorname{Re}(p_0)\) give a uniform positive lower
bound.

On the two tails of \(P_j\), step 1 shows that
\(t\) approaches the negative real direction, uniformly
away from the logarithmic cut. In particular, for
sufficiently large \(|t|\), the arguments of
\(1-x\) and \(\alpha_i n-x+1\), where
\[
x=n^{2/3}(\sigma_{j,n}+t),
\]
remain in a fixed sector about the positive real axis.

Using the reflection formula and Stirling's estimates
in this sector, the normalized phase satisfies
\[
\operatorname{Re}p_{z_n}(t)
\ge
\kappa |t|
\left(
3\log|t|
-
2\log\left(1+\frac{|t|}{n^{1/3}}\right)
\right)
-C|t|-C,
\]
for sufficiently large \(|t|\), with constants
\(\kappa>0\) and \(C>0\) independent of \(n\).

The three reflected Gamma factors contribute
the leading decay proportional to
\(3|t|\log|t|\), whereas the two remaining Gamma
factors contribute at most
\(2|t|\log(1+|t|/n^{1/3})\).

Since \(n^{1/3}\ge1\),
\[
3\log|t|
-
2\log\left(1+\frac{|t|}{n^{1/3}}\right)
\ge
3\log|t|-2\log(1+|t|)
=
\log|t|+\mathcal O(1).
\]
Consequently, there exist constants \(A,B>0\), independent of \(n\),
such that
\[
\operatorname{Re}p_{z_n}(t)
\ge
A|t|\log(2+|t|)-B.
\]

The same estimate holds for \(p_0\) by the
corresponding tail asymptotics of the limiting
phase \(\phi_j\). Since the amplitudes
\(1,q_z,q_z^2\) grow at most polynomially,
the required uniform integrability follows.

The restriction in Neuschel's statement to an integer Laplace parameter is inessential, so we apply the theorem with parameter \(n^{2/3}\).
With the orientations inherited from \(P_j\), the two half-contour
expansions combine into the nonzero simple-saddle contribution.
Consequently, there exist constants \(A_j\neq0\) such that, defining the complex scalar amplitude
\[
\mathcal A_j(n)
:=
A_j n^{1/3}
e^{\Psi_j(n^{2/3}\sigma_{j,n},n)},
\]
we have
\[
\mathbf I_j(n)^{T}
=
\mathcal A_j(n)
\left(
1+o(1),\,
n(1+o(1)),\,
n^2(1+o(1))
\right).
\]
Thus the three components share the same complex exponential amplitude,
while the polynomial factors determine the asymptotic shape of the state
vector.

By \cref{lem:gamma-asymptotics}, uniformly for \(s\) in a bounded neighborhood of \(c\rho_j\),
\[
\Psi_j(n^{2/3}s,n)
=
\log H(n)
+
n^{2/3}\phi_j(s)
+
\mathcal O(n^{1/3}).
\]
Since \(\sigma_{j,n}=c\rho_j+\mathcal O(n^{-1/3})\) and
\(\phi_j'(c\rho_j)=0\),
\[
n^{2/3}\phi_j(\sigma_{j,n})
=
n^{2/3}\phi_j(c\rho_j)+\mathcal O(1)
=
-3c\rho_jn^{2/3}+\mathcal O(1).
\]
Hence
\[
\Psi_j(n^{2/3}\sigma_{j,n},n)
=
\log H(n)
-
3c\rho_jn^{2/3}
+
\mathcal O(n^{1/3}).
\]
Taking absolute values of the amplitude and using
\(\|(1,n,n^2)\|=\Theta(n^2)\), we obtain
\[
\|\mathbf I_j(n)\|
=
H(n)
\exp\left(
-3c\,\operatorname{Re}(\rho_j)n^{2/3}
+
\mathcal O(n^{1/3})
\right),
\]
as claimed.

It remains to prove linear independence.
\(\mathbf I_3(n)\) has a strictly smaller exponential scale than the first two.
Expanding the saddle equation and the Stirling formula one order further
gives, for the two dominant amplitudes,
\[
\frac{
\mathcal A_1(n+1)/\mathcal A_2(n+1)
}{
\mathcal A_1(n)/\mathcal A_2(n)
}
=
\exp\left(
-3c(\rho_1-\rho_2)
\left((n+1)^{2/3}-n^{2/3}\right)
+
\mathcal O(n^{-2/3})
\right).
\]
Since
\[
\rho_1-\rho_2=-i\sqrt3,
\qquad
(n+1)^{2/3}-n^{2/3}
=
\frac23n^{-1/3}+\mathcal O(n^{-4/3}),
\]
we obtain
\[
\frac{
\mathcal A_1(n+1)/\mathcal A_2(n+1)
}{
\mathcal A_1(n)/\mathcal A_2(n)
}
=
\exp\left(
2i\sqrt3\,c\,n^{-1/3}
+
\mathcal O(n^{-2/3})
\right).
\]

Thus an unwrapped argument of
\(\mathcal A_1(n)/\mathcal A_2(n)\) has eventually positive increments
asymptotic to \(2\sqrt3\,c\,n^{-1/3}\). Since
\[
\sum_{n=1}^{\infty}n^{-1/3}=\infty,
\]
the relative amplitude cannot converge to a nonzero constant.

Suppose now that
\[
\lambda_1\mathbf I_1
+
\lambda_2\mathbf I_2
+
\lambda_3\mathbf I_3
=0
\]
for constants \(\lambda_1,\lambda_2,\lambda_3\). By the exponential
separation of \(\mathbf I_3\), any nontrivial relation must have
\(\lambda_1\lambda_2\neq0\). Taking the first component and using the
vector asymptotics above gives
\[
\frac{\mathcal A_1(n)}{\mathcal A_2(n)}
=
-\frac{\lambda_2}{\lambda_1}
+
o(1),
\]
a contradiction. Hence the three functions are linearly independent.

\end{proof}

\end{document}